\let\emptyset \undefined
\let\ge       \undefined
\let\le       \undefined
\let\leq\le
\let\geq\ge
\newtheorem{theorem}{Theorem}[section]
\theoremstyle{remark}
\newtheorem{remark}[theorem]{Remark}
\newtheorem{example}[theorem]{Example}
\theoremstyle{plain}
\newtheorem{corollary}[theorem]{Corollary}
\newtheorem{lemma}[theorem]{Lemma}
\newtheorem{proposition}[theorem]{Proposition}
\newtheorem{definition}[theorem]{Definition}
\newtheorem{hypothesis}[theorem]{Hypothesis}
\numberwithin{equation}{section}
\def\N{{\mathbb N}}
\def\R{{\mathbb R}}
\def\C{{\mathbb C}}
\newcommand{\Om}{\Omega}
\newcommand{\om}{\omega}
\newcommand{\F}{\mathscr{F}}
\newcommand{\G}{\mathscr{G}}
\renewcommand{\P}{\mathbb{P}}
\newcommand{\E}{\mathbb{E}}
\newcommand{\la}{\lambda}
\newcommand{\eps}{\varepsilon}
\newcommand{\calL}{\mathscr{L}}
\newcommand{\n}{\|}
\newcommand{\one}{{\bf 1}}
\renewcommand{\ss}{^\star}
\newcommand{\lb}{\langle}
\newcommand{\rb}{\rangle}
\newcommand{\limn}{\lim_{n\to\infty}}
\newcommand{\wt}{\widetilde}
\newcommand{\ud}{\,{\rm d}}
\newcommand{\ov}{\overline}
\renewcommand{\H}{\mathscr{H}}
\newcommand{\Dom}{\mathsf{D}}
\newcommand{\bF}{\mathscr{P}}
\begin{document}

\title[Stochastic convolutions]{Maximal inequalities for stochastic convolutions and pathwise uniform convergence of time discretisation schemes}
\author{Jan van Neerven and Mark Veraar}
\address{Delft Institute of Applied Mathematics\\
Delft University of Technology\\
P.O. Box 5031, 2600 GA Delft\\
The Netherlands}
\email{J.M.A.M.vanNeerven/M.C.Veraar@TUDelft.nl}

\subjclass{Primary: 60H05, Secondary: 47D06, 49J50, 60H15, 65J08, 65M12}

\keywords{Maximal inequalities, stochastic convolutions, $2$-smooth Banach spaces, evolution families, time discretisation schemes}

\thanks{The second named author is supported by VIDI subsidy 639.032.427
of the Netherlands Organisation for Scientific Research (NWO)}

\date\today

\begin{abstract}
We prove a new Burkholder-Rosenthal type inequality for discrete-time processes taking values in a $2$-smooth Banach space. As a first application we prove that if $(S(t,s))_{0\leq s\le t\leq T}$ is a $C_0$-evolution family of contractions on a $2$-smooth Banach space $X$ and $(W_t)_{t\in [0,T]}$ is a cylindrical Brownian motion on a probability space $(\Omega,\mathbb{P})$ adapted to some given filtration, then for every $0<p<\infty$ there exists a constant $C_{p,X}$ such that for all progressively measurable processes $g: [0,T]\times \Om\to X$ the process $(\int_0^t S(t,s)g_s\,{\rm d} W_s)_{t\in [0,T]}$ has a continuous modification and
$$\mathbb{E}\sup_{t\in [0,T]}\Big\n \int_0^t S(t,s)g_s\,{\rm d} W_s \Big\n^p\leq C_{p,X}^p \mathbb{E} \Bigl(\int_0^T \n g_t\n^2_{\gamma(H,X)}\,{\rm d} t\Bigr)^{p/2}.$$
Moreover, for $2\leq p<\infty$ one may take $C_{p,X} = 10 D \sqrt{p},$ where $D$ is the constant in the definition of $2$-smoothness for $X$. The order $O(\sqrt{p})$ coincides with that of Burkholder's inequality and is therefore optimal as $p\to\infty$.

Our result improves and unifies several existing maximal estimates and is even new in case $X$ is a Hilbert space. Similar results are obtained if the driving martingale $g_t\,{\rm d} W_t$ is replaced by more general $X$-valued martingales $\!\,{\rm d} M_t$. Moreover, our methods allow for random evolution systems, a setting which appears to be completely new as far as maximal inequalities are concerned.

As a second application, for a large class of time discretisation schemes (including splitting, implicit Euler, Crank-Nicholson, and other rational schemes) we obtain stability and pathwise uniform convergence of time discretisation schemes for solutions of linear SPDEs
$$ \,{\rm d} u_t = A(t)u_t\,{\rm d} t + g_t\,{\rm d} W_t, \quad u_0 = 0,$$ where the family $(A(t))_{t\in [0,T]}$ is assumed to generate a $C_0$-evolution family $(S(t,s))_{0\leq s\leq t\leq T}$ of contractions on a $2$-smooth Banach spaces $X$. Under spatial smoothness assumptions on the inhomogeneity $g$, contractivity is not needed and explicit decay rates are obtained. In the parabolic setting this sharpens several know estimates in the literature; beyond the parabolic setting this seems to provide the first systematic approach to pathwise uniform convergence to time discretisation schemes.
\end{abstract}

\dedicatory{Dedicated to Lutz Weis on the occasion of his 70th birthday.}

\maketitle

\newpage

\setcounter{tocdepth}{1}
\tableofcontents

\section{Introduction}
In this paper we study maximal inequalities for the mild solutions of
time-dependent stochastic evolution equations of the form
\begin{equation}\label{eq:SEE-intro}
\begin{cases}
\ud u_t &= A(t)u_t + g_t\ud W_t, \quad t\in [0,T], \\ u_0 & = 0.
\end{cases}
\end{equation}
Here, $(A(t))_{t\in [0,T]}$ is a family of closed operators acting in a Banach space $X$ generating a $C_0$-evolution family $(S(t,s))_{0\le s\le t\le T}$,
$(W_t)_{t\in [0,T]}$ is a Brownian motion defined on a probability space $(\Om,\F,\P)$, adapted to some give filtration $(\F_t)_{t\in [0,T]}$, and $(g_t)_{t\in[0,T]}$ is a progressively measurable stochastic process with values in $X$. Under these assumptions the mild solution is given, at least formally, by the $X$-valued stochastic convolution-type integral
\begin{align}\label{eq:u} u_t := \int_0^t S(t,s)g_s\ud W_s, \quad t\in [0,T].
\end{align}
An important special case of \eqref{eq:SEE-intro} is the time-independent case where $A(t)\equiv A$ generates a $C_0$-semigroup $(S(t))_{t\ge 0}$ on $X$ and $S(t,s) = S(t-s)$.
More generally we will consider stochastic convolutions driven by cylindrical Brownian motions and assume that $g$ is operator-valued; this extension is mostly routine and for the ease of presentation will not be considered in this introduction.

In order to give a rigorous meaning to the stochastic integral in \eqref{eq:u} one needs to impose suitable measurability and integrability
assumptions on $g$ and geometrical properties on $X$, such as $2$-smoothness \cite{BD, Brz95, Brz97, Dett89, Dett91, Nei, Ondrejat04, Ond05}
or the UMD property \cite{NVW07a,NVW07b}. The UMD theory is in some sense the definitive theory, in that it features a two-sided Burkholder inequality and completely natural extensions of the martingale representation theorem \cite{NVW07a,NVW07b} and the Clark--Ocone theorem \cite{MaasNee08}; from the point of view of applications to SPDE it provides stochastic maximal $L^p$-regularity for parabolic problems \cite{NVW12b, NVW12a, NVW13, PorVer} which in turn can be used to study quasi- and semi-linear PDEs \cite{AgreVernon}. The $2$-smooth theory only allows for limited versions of these results, but it is easier in its basic constructions and adequate for many other purposes, and will provide the setting for this paper.

Instrumental in proving pathwise continuity of mild solutions to \eqref{eq:SEE-intro} is the availability of suitable estimates for the maximal function
$u^\star: \Om\to [0,\infty)$, $$u^\star := \sup_{t\in[0,T]} \n u_t\n,$$
where $(u_t)_{t\in [0,T]}$ is the process defined by \eqref{eq:u};
norms are taken in $X$ pointwise on $\Omega$. The first such estimate was obtained by Kotelenez \cite{Kot82, Kot84} who showed that if $(A(t))_{t\in [0,T]}$ generates a contractive evolution family $(S(t,s))_{0\le s \le t\le T}$ on a Hilbert space $X$, then the process
$(u_t)_{t\in [0,T]}$ defined by \eqref{eq:u} has a continuous modification which satisfies the maximal inequality
\begin{align}\label{eq:Kot}
\E \sup_{t\in[0,T]} \n u_t\n^2 \le C^2 \E \int_0^T \n g_t\n^2\ud t,
\end{align}
where $C$ is some absolute constant. The extension of \eqref{eq:Kot} to $2$-smooth Banach spaces and general exponents $0<p<\infty$ has been investigated by many authors
\cite{BrzPes, HauSei1, HauSei2,Ichi, NeeZhu, Tub} who all limited themselves to the special case of contraction semigroups. This development is surveyed in \cite{NV20a}, where also some extensions to evolution families are discussed. The more general case of stochastic convolutions driven by L\'evy processes has been studied in the $2$-smooth setting in
\cite{ZBH, ZBW}.

For Brownian motion as the driving process, the best result available to date is due to Zhu and the first author in \cite{NeeZhu}, where it was shown that if $(S(t))_{t\ge 0}$ is a $C_0$-semigroup of contractions on a $2$-smooth Banach space $X$ and $(g_t)_{t\in[0,T]}$ is a progressively measurable process with values in $X$, then the process
$(u_t)_{t\in [0,T]}$ defined by the stochastic convolution
\begin{align*} u_t := \int_0^t S(t-s)g_s\ud W_s, \quad t\in [0,T],
\end{align*}
has a continuous modification which satisfies, for every $0<p<\infty$,
\begin{align}\label{eq:NeeZhu} \E \sup_{t\in[0,T]} \n u_t\n^p \le C_{p,X}^p \E \Bigl(\int_0^T \n g_t\n^2\ud t\Bigr)^{p/2},
\end{align}
where $C_{p,X}$ is a constant depending only on $p$ and $X$.
In certain applications it is important to have explicit information on the constant in the asymptotic regime $p\to\infty$. In the special case $S(t) \equiv  I$ the estimate \eqref{eq:NeeZhu} reduces to the Burkholder inequality for $2$-smooth Banach spaces, for which the asymptotic dependence of  $C_{p,X}$ is known to be of order $O(\sqrt{p})$ as $p\to\infty$ \cite{Sei}. For Hilbert spaces $X$ and $C_0$-semigroup of contractions, \eqref{eq:NeeZhu} is known to hold to order $O(\sqrt{p})$ as $p\to\infty$ \cite{HauSei1,HauSei2}. In that setting the Sz.-Nagy dilation theorem can be used to reduce matters to the
Burkholder inequality. The order $O(\sqrt{p})$ can be used to derive exponential estimates which in turn can be used to study large deviations (see \cite{Chow} and \cite{Pes94}). Inspecting the proof of \eqref{eq:NeeZhu} in \cite{NeeZhu} in the $2$-smooth case, it is seen that the asymptotic $p$-dependence of the constant in that paper is non-optimal.

The aim of the present paper is to
simultaneously improve the results cited above in two directions:
\begin{itemize}
 \item
to extend \eqref{eq:NeeZhu} to arbitrary $C_0$-evolution families of contractions on $2$-smooth Banach spaces $X$ (not even assuming the existence of a generating family $(A(t))_{t\in [0,T]}$);
 \item
to show that the constant $C_{p,X}$ in the resulting maximal inequality is of order $O(\sqrt{p})$ as $p\to\infty$.
\end{itemize}
The precise statement of our main result, which corresponds to the special case of Theorem \ref{thm:contractionS-new} for Brownian motion, is as follows.

\begin{theorem}\label{thm:main}
Let $(S(t,s))_{0\leq s\leq t\leq T}$ be a $C_0$-evolution family of contractions on a $2$-smooth Banach space $X$.
Let $(W_t)_{t\in [0,T]}$ be an adapted Brownian motion on a probability space $(\Om,\P)$,
and let $(g_t)_{t\in[0,T]}$ be a progressively measurable process with values in $X$. Then
the $X$-valued process $(u_t)_{t\in [0,T]}$ defined by
$$ u_t:= \int_0^t S(t,s)g_s\ud W_s, \qquad t\in [0,T],$$
has a continuous modification which satisfies
\[\E\sup_{t\in [0,T]}\n u_t \n^p\leq C_{p,X}^p \E \Bigl(\int_0^T \n g_t\n^2\ud t\Bigr)^{p/2},\]
where the constant $C_{p,X}$ only depends on $p$ and the constant $D$ in the definition of $2$-smoothness for $X$.
For $2\le p<\infty$ the inequality holds with $$C_{p,X} = 10D\sqrt{p}.$$
\end{theorem}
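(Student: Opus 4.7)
The plan is to reduce the continuous-time maximal inequality to a discrete one, exploit the contractivity of $S(t,s)$ to cast the values on a time grid as a perturbed martingale, and invoke the Burkholder--Rosenthal type inequality announced in the abstract. First I would reduce to elementary adapted (piecewise constant) $g$ by the standard density argument: the class of processes for which both sides are finite is closed under suitable approximations, and for step processes the stochastic convolution is easily checked to be well-defined with the desired pathwise continuity, since on each subinterval of constancy it equals $S(t,t_{k-1})u_{t_{k-1}}$ plus a genuine Brownian stochastic integral against a fixed vector, which is continuous in $t$.

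Next, fix a partition $0 = t_0 < t_1 < \cdots < t_n = T$ subordinate to the jumps of $g$ and set $v_k := u_{t_k}$. Iterating the evolution property $S(t_k,s) = S(t_k,t_{k-1})S(t_{k-1},s)$ gives the recursion
\[
 v_k = S(t_k,t_{k-1}) v_{k-1} + \xi_k, \qquad \xi_k := \int_{t_{k-1}}^{t_k} S(t_k,s) g_s \ud W_s,
\]
where $(\xi_k)$ is an $\F_{t_k}$-adapted martingale difference sequence in $X$ and each $S(t_k,t_{k-1})$ is a contraction. This is exactly the structure to which the new discrete Burkholder--Rosenthal inequality applies, yielding, for $p \geq 2$,
\[
 \E \sup_{0\le k\le n} \n v_k \n^p \le C^p \, \E \Bigl( \sum_{k=1}^n \E\bigl[ \n \xi_k \n^2 \,\big|\, \F_{t_{k-1}} \bigr] \Bigr)^{p/2},
\]
with $C = O(\sqrt{p})$; the constant $10D\sqrt{p}$ will come out by tracking the constants carefully from that discrete inequality combined with the one-sided Burkholder estimate for $2$-smooth Banach spaces.

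To estimate each conditional second moment, the one-sided Burkholder inequality for $2$-smooth Banach spaces (applied conditionally on $\F_{t_{k-1}}$) gives
\[
 \E\bigl[ \n \xi_k \n^2 \,\big|\, \F_{t_{k-1}} \bigr] \le D^2 \, \E \Bigl[ \int_{t_{k-1}}^{t_k} \n S(t_k,s) g_s \n_{\gamma(H,X)}^2 \ud s \,\Big|\, \F_{t_{k-1}} \Bigr] \le D^2 \, \E \Bigl[ \int_{t_{k-1}}^{t_k} \n g_s \n_{\gamma(H,X)}^2 \ud s \,\Big|\, \F_{t_{k-1}} \Bigr],
\]
since the $\gamma$-radonifying norm is ideal and $S(t_k,s)$ is a contraction. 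Summing in $k$ produces $D^2 \E [ \int_0^T \n g_s \n_{\gamma(H,X)}^2 \ud s \mid \F_0]$ inside the $p/2$-th power, and the desired bound on $\E \sup_k \n v_k \n^p$ follows.

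Finally, I would upgrade the supremum over the grid to the supremum over $[0,T]$. For step $g$, on each $[t_{k-1},t_k]$ one has $u_t = S(t,t_{k-1}) v_{k-1} + \int_{t_{k-1}}^t S(t,s) g_{t_{k-1}} \ud W_s$, which is continuous by strong continuity of $S(\cdot,t_{k-1})$ and continuity of the stochastic integral; moreover refining the partition and applying the discrete bound to each refinement shows that the estimate is stable under refinement, so it passes to $\E \sup_{t \in [0,T]} \n u_t \n^p$. For general $g$, approximating by elementaries and using the inequality just obtained shows that the approximants are Cauchy in $L^p(\Om;C([0,T];X))$, giving a continuous modification of $u$ satisfying the claimed estimate. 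The range $0 < p < 2$ is then recovered by a standard Lenglart-type domination argument. The principal obstacle is the discrete Burkholder--Rosenthal inequality for recursions $v_k = T_k v_{k-1} + \xi_k$ with contractions $T_k$, because getting the sharp order $O(\sqrt{p})$ requires more than a naive iteration of the martingale Burkholder bound (which would produce a suboptimal $p$-dependence, as in \cite{NeeZhu}); this is precisely the new discrete inequality developed for that purpose.
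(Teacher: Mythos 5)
Your overall strategy --- discretise in time, cast $v_k = S(t_k,t_{k-1})v_{k-1} + \xi_k$ as the structure addressed by the new Burkholder--Rosenthal inequality with predictable contractions, estimate the conditional increments by a one-sided Burkholder estimate, and pass to the limit --- is the paper's route. But there is a genuine gap at the central step: the discrete inequality you quote, with only the $\bigl(\sum_k\E[\n\xi_k\n^2\mid\F_{t_{k-1}}]\bigr)^{p/2}$ term and an $O(\sqrt{p})$ constant, is not what the key lemma (Theorem~\ref{thm:Pinelis}) proves. That lemma gives the two-term bound $\n f^\star\n_p \le 5p\,\n dG^\star\n_p + 10D\sqrt{p}\,\n s(G)\n_p$, and the first term carries a constant growing linearly in $p$. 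You neither establish nor cite the one-term form, and it would require a separate argument exploiting the conditional Gaussianity of $\xi_k$. In the paper the $O(\sqrt{p})$ constant emerges precisely because one shows $\n dG^\star\n_p \lesssim (\mathrm{mesh})^{1/2-1/p}$ times a fixed quantity, so that the $5p\,\n dG^\star\n_p$ contribution vanishes in the refinement limit. Your sketch uses partition refinement only to pass from the grid supremum to the continuous one; you do not recognise that refinement is also what kills the bad term --- essential analytic work, not bookkeeping, and without it the asserted constant $10D\sqrt{p}$ does not follow.

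A secondary point: the paper replaces $S(t,s)$ on subintervals by the piecewise-constant family $K(t,s)=S(t,\sigma_n(s))$, so that each increment $dG_j$ is literally a single Gaussian increment and the $q$-th moment identity $\E_{j-1}\n dG_j\n^q = (t_j-t_{j-1})^{q/2}\n g_{t_{j-1}}\n^q_{\gamma_q(H,X)}$ holds exactly; this is what feeds the mesh estimate above. With your $\xi_k = \int_{t_{k-1}}^{t_k}S(t_k,s)g_s\,\ud W_s$ the integrand varies in $s$, the moment computation is no longer exact (you pick up an extra factor $D$), and the per-interval process $t\mapsto\int_{t_{k-1}}^{t}S(t,s)g_{t_{k-1}}\,\ud W_s$ is itself a small stochastic convolution, whose pathwise continuity you assert but which is not evident --- it is exactly the kind of fact the theorem is supposed to establish. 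The passage $K=S_n\to S$ at the end of the paper's argument is what resolves this.
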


Theorem \ref{thm:contractionS-new} considers the more general situation of a cylindrical Brownian motion with covariance given by the inner product of a Hilbert space $H$ and progressively measurable processes $g$ with values in the space
$\gamma(H,X)$ of $\gamma$-radonifying operators from $H$ to $X$ (the definition of this space is recalled in Section \ref{sec:prelim}).

For evolution families, Theorem \ref{thm:main} is new even for Hilbert spaces $X$. In the $2$-smooth case it completely settles the asymptotic optimality problem; this is new even in the semigroup case.
The proof of the theorem is very different from \cite{HauSei1,HauSei2} and \cite{NeeZhu} and
combines ideas of Kotelenez \cite{Kot82} and Seidler \cite{Sei}. Seidler's proof of the
$O(\sqrt{p})$ bound for the constant in Burkholder inequality in $2$-smooth Banach spaces is based on a clever modification
of the Burkholder--Rosenthal inequality due to Pinelis \cite{Pin}. We further extend Pinelis's inequality by accommodating additional predictable contraction operators in it which enable us to merge the inequality with a splitting technique already used by Kotelenez.

Theorems \ref{thm:main} and \ref{thm:contractionS-new} are also applicable in the setting where the evolution family $S$ itself is not contractive, but admits a dilation to a contractive evolution family on a $2$-smooth Banach space.
In the semigroup case, the boundedness of the $H^\infty$-calculus of the generator $A$ of angle $<\frac12\pi$ implies that the semigroup has a dilation to an isometric $C_0$-group (see \cite{FW,HNVW3,NV20a,Sei,VerWei}). In this case, however, there is no need to use Theorem \ref{thm:main} since one can apply the simpler method of \cite{HauSei1,HauSei2}.

Our method can be used quite naturally to prove the stability (uniformly in time) of certain numerical schemes associated with \eqref{eq:SEE-intro}. This is pursued in Section \ref{sec:numerical}, where we prove that if $(S(t))_{t\geq 0}$ is a $C_0$-semigroup of contractions on a $(2,D)$-smooth Banach space $X$ with generator $A$,
and $u$ is a continuous modification of the process $(\int_0^t S(t-s) g_s\ud W_s)_{t\in [0,T]}$, then for any contractive approximation scheme $R$ which approximates $(S(t))_{t\geq 0}$ to some order $\alpha\in (0,1]$ on the domain $\Dom(A)$ one has
\begin{align}\label{eq:num-intro-1}\E\sup_{j=0,\ldots,n}\|u_{t_j^{(n)}} - u_j^{(n)}\|^p\to 0 \ \ \hbox{as} \ \ n\to\infty,
\end{align}
where
\begin{equation}\label{eq:num-intro}
\begin{cases}
u_0^{(n)} & :=0, \\ u_j^{(n)} &:= R(T/n) \Bigl(u_{j-1}^{(n)} + \displaystyle \int_{t_{j-1}^{(n)}}^{t_{j}^{(n)}} g_s \ud W_s\Bigr), \quad j=1,\dots,n.
\end{cases}
\end{equation}
The crucial observation underlying \eqref{eq:num-intro-1}  is that the sequence $(u_j^{(n)})_{j=0}^n$ defined by \eqref{eq:num-intro} is precisely of the right format to apply our extension of Pinelis's inequality.
For $C_0$-semigroups which are not necessarily contractive and functions $g\in L^p(\Omega;L^2(0,T;\Dom(A))$, we show that convergence holds with the following explicit rate:
\begin{align}\label{eq:convrateintro}
\Bigl(\E\sup_{j=0,\ldots,n}\|u_{t_j^{(n)}} - u_j^{(n)}\|^p\Bigr)^{1/p} \le C \frac{\sqrt{\log (n+1)}}{n^\alpha} \|g\|_{L^p(\Omega;L^2(0,T;\Dom(A)))},
\end{align}
where $C$ is a constant independent of $n$ and $g$. This estimate is somewhat simpler, in that it directly uses Seidler's version of the Burkholder inequality of Proposition \ref{prop:Seid} in combination with a simple trick, in  Proposition \ref{prop:stochasticellinftyn}, involving switching back and forth from $\ell_n^\infty(X)$ to $\ell_n^{q(n)}(X)$  for a clever choice of $q(n)$. This can be done at the expense of a constant $n^{1/q(n)}$, exploiting the fact proven in Proposition \ref{prop:LpX-2mooth} that $\ell^{q(n)}(X)$ is $2$-smooth for $2\le q<\infty$ with constant of order $\sqrt{q(n)}$. This appears to be a new technique whose potential deserves further investigation.

Examples of numerical schemes to which our abstract results can be applied include the splitting method (with $R(t) = S(t)$ with $\alpha=1$), the implicit Euler method (with $R(t) = (I-tA)^{-1}$ and $\alpha=1/2$), and the Crank--Nicholson method (with
$R(tA) = (2+tA)(2-tA)^{-1}$ and $\alpha=2/3$). Moreover, if $g$ takes values in suitable intermediate spaces between $X$ and $\Dom(A^m)$ with $m\geq1$, appropriate rates of convergence can be obtained for each of these methods.

We expect that the new results in the simple linear setting will provide new insights for approximation of nonlinear SPDEs also by adapted time schemes and plan to address this in future work.

To illustrate the main result we consider the stochastic heat equation with the implicit Euler scheme (cf. Example \ref{ex:stochheat}). For simplicity, here we state the result in terms of Sobolev spaces. In Example \ref{ex:stochheat}, the use of Bessel potential spaces allows us to take the smoothness exponent $m$ fractional and also negative.
Further examples can be found in Section \ref{subsec:applischemes}.

\begin{example}[Heat equation, implicit Euler scheme]
Consider the inhomogeneous stochastic heat equation on $\R^d$:
\begin{equation}\label{eq:SEE-heat-intro}
\begin{cases}
\ud u_t &= \Delta u_t + \sum_{k\geq 1} g_{t}^{k}\ud W_t^k, \quad t\in [0,T].
\\ u_0 & = 0.
\end{cases}
\end{equation}
Here, $W=(W^k)_{k\geq 1}$ is a sequence of independent standard Brownian motions.
We further assume that each $g^k:\Omega\times [0,T]\times\R^d\to \R$ is progressively measurable and that
$p\in (0,\infty)$, $q\in [2, \infty)$, and $m\in \N=\{0,1, \ldots\}$ are such that
\[ \|g\|_{\mathbb{W}^{m,q,p}}^p:=\sum_{i=1}^d \sum_{j=0}^{m}\E \Big\{\int_0^T \Big(\int_{\R^d}\Big(\sum_{k\geq 1}|\partial_i^j g_k(t,x)|^2\Big)^{q/2} \ud x\Big)^{2/q}\ud t\Big\}^{p/2}\]
is finite.
For $n=1,2,\dots$ set $t_j^{(n)} := jT/n$ and consider the partition
$\pi^{(n)} := \{t_j^{(n)}: j=0,\ldots, n\}$.
Let $(S(t))_{t\geq 0}$ denote the heat semigroup on $L^q(\R^d)$ and set
$$u_t:= \int_0^t S(t-s) g_s\ud W_s, \quad t\in [0,T].$$
This stochastic integral is well defined as an $L^q(\R^d)$-valued It\^o integral by
Proposition \ref{prop:Seid} and \eqref{eq:gammaLq}.

Define the discrete approximation by $u_0^{(n)}  :=0$, and
\[
u_j^{(n)} := (1-\tfrac{T}{n}\Delta)^{-1} \Big(u_{j-1}^{(n)} + \int_{t_{j-1}^{(n)}}^{t_{j}^{(n)}} g_s \ud W_s\Big), \quad j=1,\dots,n,
\]
Let $W^{j,q}(\R^d)$ be the Sobolev space of smoothness $j$ and integrability $q$. Then the following results hold:
\begin{align*}
\E\sup_{j=0,\ldots,n}\|u_{t_j^{(n)}} - u_j^{(n)}\|^p_{W^{m-2,q}(\R^d)} \le \Bigl( C_{p,q,d,m} \frac{\sqrt{\log (n+1)}}{n}\Bigr)^p \|g\|_{\mathbb{W}^{m,q,p}}^p, \ & m\geq 2,
\\
\E\sup_{j=0,\ldots,n}\|u_{t_j^{(n)}} - u_j^{(n)}\|^p_{W^{m-1,q}(\R^d)}  \le \Bigl(C_{p,q,d,m} \frac{\sqrt{\log (n+1)}}{n^{1/2}}\Bigr)^p \|g\|_{\mathbb{W}^{m,q,p}}^p,\  & m\geq 1,
\\
\limn \E\sup_{j=0,\ldots,n}\|u_{t_j^{(n)}} - u_j^{(n)}\|^p_{W^{m,q}(\R^d)}  \to 0 \quad \qquad \qquad\qquad \qquad \qquad\qquad\qquad & m\geq 0.
\end{align*}
This follows from Theorems \ref{thm:generalapprox1} and \ref{thm:generalapprox2}.
\end{example}

In the final Section \ref{sec:random} we extend some of results to stochastic convolutions involving random evolution families, which arise naturally if the operator family $(A(t))_{t\in [0,T]}$ in \eqref{eq:SEE-intro} depends on a random parameter in an adapted way. That this is possible at all in the abstract setting of evolution equations in infinite dimensions is quite remarkable. It requires replacing the It\^o integral with the forward integral of \cite{RussoVallois} in order to avoid adaptedness problems. Stochastic convolution in the forward sense is known to still give the weak solution to \eqref{eq:SEE-intro} (see \cite[Proposition 5.3]{LeonNual}, \cite[Theorem 4.9]{ProVer14} and Theorem \ref{thm:contractionS-adaptedSDE} below). In the parabolic setting, space-time regularity results have been derived by Pronk and the second-named author in \cite{ProVer14} using so-called pathwise mild solutions (see Proposition \ref{prop:forwardequiv}) and a simple integration by parts trick. Pathwise mild solutions have been recently used to study quasilinear PDEs in \cite{fernando2015stochastic,KuhnNe20,mohan2017stochastic}
and random attractors in \cite{kuehn2020random}. The new maximal estimates proved in our current paper are expected to have implications for these results as well.

For adapted families $(A(t))_{t\in [0,T]}$,  maximal inequalities can be alternatively derived via It\^o's formula (see \cite{NV20a} and references therein). In contrast to the results obtained here, however, this does not lead to constants of order $O(\sqrt{p})$ as $p\to \infty$. In the setting of monotone (possible nonlinear) operators and $p=2$, the It\^o formula argument is applicable in a wider setting (see \cite{LiuRoc}). Some extensions to $p>2$  have been obtained recently in \cite{NeeSis}.

\section{Preliminaries}\label{sec:prelim}

Throughout this paper we work over the real scalar field. Unless otherwise stated, random variables and stochastic processes
are defined on a probability space $(\Om,\F,\P)$ which we consider to be fixed throughout.
On this probability space we fix a filtration $(\F_t)_{t\in [0,T]}$ once and for all. Standard notions from the theory of stochastic processes always refer to this filtration. Whenever we consider stochastic integrals with respect to a (cylindrical) Brownian motion or a more general type of driving process, it is always assumed that it is adapted with respect to this filtration.
The conditional expectation of a random variable $\xi$ with respect to a sub-$\sigma$-algebra $\G\subseteq \F$ will be denoted by $\E_\G(\xi)$.
The progressive $\sigma$-algebra associated with $(\F_t)_{t\in [0,T]}$, i.e., the $\sigma$-algebra generated by sets of the form $B\times A$ with $B\in \mathscr{B}([0,t])$ and $A\in \F_t$, where $t$ ranges over $[0,T]$, is denoted by $\mathscr{P}$.
We will use the subscript $\bF$ to denote the closed subspace of all progressively measurable process in a given space of processes.

When $X$ is a Banach space, under an {\em $X$-valued random variable} we understand a strongly measurable function (i.e., a function which is the pointwise limit of a sequence of simple functions) from $\Omega$ into $X$; for details the reader is referred to \cite{HNVW16, HNVW17}. For the purposes of this article, an {\em $X$-valued process} is a family of $X$-valued random variables indexed by $[0,T]$. Two processes  $(g_t)_{t\in [0,T]}$ and $(h_t)_{t\in [0,T]}$ are said to be {\em modifications} of each other if for al $t\in [0,T]$ we have $g_t = h_t$ almost surely (with exceptional set that may depend on $t$).
A process $(g_t)_{t\in [0,T]}$ with values in $X$ is said to be {\em progressively measurable} if $g$ is strongly measurable
as an $X$-valued function on the measurable space $([0,T]\times \Om,\mathscr{P})$.
It is a deep result in the theory of stochastic processes that every adapted and strongly measurable $X$-valued stochastic process admits a progressively measurable modification; an elementary proof  is offered in \cite{OndSei}.

\subsection{2-Smooth Banach spaces}\label{subsec:2smooth}

A Banach space $X$ is said to have \emph{martingale type} $p\in [1,2]$\index{martingale type $p$} if there exists a constant $C\ge 1$ such that
\begin{equation*}
  \E \n f_N\n {p}
  \leq C^p \Bigl(\E \n f_0\n ^p+\sum_{n=1}^N\n f_n - f_{n-1}\n^p\Bigl)
\end{equation*}
for all $X$-valued $L^p$-martingales $(f_n)_{n=0}^N$.
A Banach space $X$ is called {\em $(p,D)$-smooth}, where $p\in [1,2]$ and $D\ge 1$, if for all $x,y\in X$ we have
\begin{align*}
	 \n  x+y\n^p+\n  x-y\n^p\leq 2\n  x\n^p+2D^p\n  y\n^p.
\end{align*}
A Banach space is called {\em $p$-smooth} if it is $(p,D)$-smooth for some $D\ge 1$.

By a fundamental result due to Pisier \cite{Pis} every $p$-smooth Banach space has martingale type $p$, and conversely every Banach space with martingale type $p$ admits an equivalent $p$-smooth norm. Moreover,
if $X$ has martingale type $p$ with constant $C$, an equivalent $(p,C)$-smooth norm can be
found;
if $X$ is $(p,D)$-smooth, then $X$ has martingale type $p$ with constant at most $2C$ (and the constant $2$ can be omitted for $p=2$, see Remark \ref{rem:Rad}). Detailed proofs of these facts can be found in \cite{Pis-Mart,Wen, Woy}.

The class of $2$-smooth Banach space is of particular interest from the point of view of stochastic analysis. It includes all Hilbert spaces (with $D=1$, by the parallelogram identity) and
the spaces $L^p(\mu)$ with $2\le p<\infty$ (with $D=\sqrt{p-1}$, see \cite[Proposition 2.1]{Pin} and Proposition \ref{prop:LpX-2mooth} below). The reason for being interested in $2$-smooth spaces rather than spaces with martingale type $2$ is as follows.  Martingale type $2$ is preserved under passing to equivalent norms, but this is not the case for $2$-smoothness. In the results to follow, semigroups and evolution families of {\em contractions} (i.e., operators of norm $\le 1$) play a distinguished role. Since contractivity need not be preserved under passing to equivalent norms, such a distinguished role cannot be expected in the setting of martingale type $2$ spaces. In this connection the following interesting question seems to be an open: if $X$ has martingale type $2$ and supports a $C_0$-semigroup (or $C_0$-evolution family), does there exist an equivalent $(2,D)$-smooth norm with respect to which the semigroup (or evolution family) is contractive?

In what follows we recall some useful properties of $2$-smooth Banach spaces that will be needed in this paper.

If $X$ is $(2,D)$-smooth, then by \cite[Lemma 2.1]{NeeZhu} and its proof the function
\[\rho(x):= \n x\n^2\]
is Fr\'echet differentiable on $X$ and its derivative is Lipschitz continuous. Conversely, if $\rho$ is twice Fr\'echet differentiable and $\rho''(x)(y,y)\leq 2D^2\|y\|^2$ at every $x\in X$, then $X$ is $(2,D)$-smooth (see \cite{Pin} for a more general version of this converse). Unlike in finite dimensions, Lipschitz continuity does not imply almost everywhere differentiability (the latter even being meaningless in the absence of a reference measure). One way to get around this is to consider the functions
\[\rho_{x,y}(t):= \rho(x+ty) = \n x+ ty\n^2. \]
The following lemma is implicit in \cite{Pin}. For the reader's convenience we include a proof.

\begin{proposition}\label{prop:mian-p2} For any Banach space $X$ and constant $D\ge 1$ the following assertions are equivalent:
\begin{enumerate}[\rm(1)]
 \item\label{it:mian-p2-1} $X$ is $(2,D)$-smooth;
 \item\label{it:mian-p2-2} for all $x,y\in X$ the function
 $\rho_{x,y}(t):= \rho(x+ty) = \n x+ty\n^2$ is differentiable on $\R$, its derivative is Lipschitz continuous, and satisfies
 $$ \rho_{x,y}'(t) - \rho_{x,y}'(s) \le 2D^2(t-s)\n y\n^2, \quad s,t\in \R, \ t\ge s.$$
\end{enumerate}
\end{proposition}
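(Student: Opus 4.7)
The plan is to prove the two implications via a concavity/convexity analysis of $\rho_{x,y}$.

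For the implication $(1)\Rightarrow(2)$, I would first apply the $(2,D)$-smoothness inequality with the pair $(x+\tfrac{s+t}{2}y,\tfrac{t-s}{2}y)$ in place of $(x,y)$. This rewrites as
\begin{equation*}
\rho_{x,y}(t)+\rho_{x,y}(s)\leq 2\rho_{x,y}\bigl(\tfrac{s+t}{2}\bigr)+\tfrac{D^2(t-s)^2}{2}\n y\n^2,
\end{equation*}
which, after subtracting the corresponding identity for $g(t):=D^2t^2\n y\n^2$, shows that $h:=\rho_{x,y}-g$ is midpoint concave. Since $t\mapsto\n x+ty\n$ is continuous and convex (norms are convex) and $s\mapsto s^2$ is convex and non-decreasing on $[0,\infty)$, the composition $\rho_{x,y}$ is continuous and convex, hence $h$ is continuous and midpoint concave, and therefore concave on $\R$.

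Next I would use this combination (convex $\rho_{x,y}$, concave $\rho_{x,y}-g$) to derive both differentiability and the Lipschitz bound. Concavity of $h$ gives existence of left and right derivatives with $h_+'(s)\geq h_-'(t)$ for $s<t$; convexity of $\rho_{x,y}$ gives $(\rho_{x,y})_-'(t)\geq(\rho_{x,y})_+'(s)$. Rewriting the first inequality as $(\rho_{x,y})_-'(t)-(\rho_{x,y})_+'(s)\leq g'(t)-g'(s)=2D^2(t-s)\n y\n^2$ and combining with the non-negativity from convexity gives
\begin{equation*}
0\leq (\rho_{x,y})_-'(t)-(\rho_{x,y})_+'(s)\leq 2D^2(t-s)\n y\n^2,\qquad s<t.
\end{equation*}
Letting $s\uparrow t$ forces $(\rho_{x,y})_-'(t)=(\rho_{x,y})_+'(t)$, so $\rho_{x,y}$ is differentiable, and the same display with $s<t$ yields the claimed Lipschitz bound.

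For the converse $(2)\Rightarrow(1)$, I would apply the fundamental theorem of calculus (valid since $\rho_{x,y}'$ is Lipschitz) twice. Writing
\begin{equation*}
\rho_{x,y}(1)+\rho_{x,y}(-1)-2\rho_{x,y}(0)=\int_0^1\rho_{x,y}'(t)\ud t-\int_{-1}^0\rho_{x,y}'(t)\ud t=\int_0^1\bigl(\rho_{x,y}'(t)-\rho_{x,y}'(t-1)\bigr)\ud t
\end{equation*}
and invoking the hypothesis with $s=t-1$ gives the integrand bound $2D^2\n y\n^2$, whence the right-hand side is at most $2D^2\n y\n^2$. Since $\rho_{x,y}(\pm1)=\n x\pm y\n^2$ and $\rho_{x,y}(0)=\n x\n^2$, this is exactly the $(2,D)$-smoothness inequality.

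The main subtlety I expect is in the first implication: one must correctly leverage the interplay between convexity of $\rho_{x,y}$ and concavity of $\rho_{x,y}-g$ to pass from midpoint concavity (which is all the smoothness inequality gives directly) to the two-sided Lipschitz control on $\rho_{x,y}'$, rather than the one-sided bound the concavity alone would suggest. The converse direction, by contrast, is essentially a one-line computation once the hypothesis is recognized as a quantitative statement about the oscillation of $\rho_{x,y}'$ over intervals of length one.
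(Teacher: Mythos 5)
Your $(2)\Rightarrow(1)$ argument is the same one-line computation as in the paper. Your $(1)\Rightarrow(2)$, however, takes a genuinely different and more self-contained route. The paper imports the Fr\'echet differentiability of $\rho(x)=\|x\|^2$ on a $(2,D)$-smooth space and the Lipschitz continuity of $\rho'$ (citing van Neerven--Zhu and Deville--Godefroy--Zizler), applies the chain rule to get $\rho_{x,y}'(t)=\langle y,\rho'(x+ty)\rangle$, and then bounds $\rho_{x,y}''$ almost everywhere via the second-difference characterisation of $2$-smoothness. You instead observe that the $(2,D)$-smoothness inequality applied to the pair $\bigl(x+\tfrac{s+t}{2}y,\,\tfrac{t-s}{2}y\bigr)$ says precisely that $h:=\rho_{x,y}-D^2t^2\|y\|^2$ is midpoint concave, hence concave by continuity, while $\rho_{x,y}$ is convex as the square of a nonnegative convex function. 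This reduces everything to one-dimensional convex analysis, with no appeal to Banach-space differentiability theory -- a cleaner derivation that avoids the external citations.

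One step in your differentiability argument does not work as stated. From the sandwich
$$0\le(\rho_{x,y})_-'(t)-(\rho_{x,y})_+'(s)\le 2D^2(t-s)\|y\|^2,\qquad s<t,$$
letting $s\uparrow t$ does not yield $(\rho_{x,y})_-'(t)=(\rho_{x,y})_+'(t)$: for a convex function $\phi$ one has $\lim_{s\uparrow t}\phi_+'(s)=\phi_-'(t)$, so all three terms tend to $0$ and the limit is vacuous. The fix is immediate and uses only what you have already established, applied at a single point: convexity of $\rho_{x,y}$ gives $(\rho_{x,y})_-'(t)\le(\rho_{x,y})_+'(t)$, while concavity of $h$ gives $h_-'(t)\ge h_+'(t)$, which since $g(t)=D^2t^2\|y\|^2$ is smooth reads $(\rho_{x,y})_-'(t)\ge(\rho_{x,y})_+'(t)$. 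Equality follows, and the two-point sandwich with genuine derivatives then gives the Lipschitz bound exactly as you write.
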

\begin{proof}
\eqref{it:mian-p2-1}$\Rightarrow$\eqref{it:mian-p2-2}: \
Fix $x,y\in X$. The differentiability of $\rho_{x,y}(t)= \n x+ty\n^2$ follows from the Fr\'echet differentiability of $\rho$, and by the chain rule we have $\rho_{x,y}'(t) = \lb y,\rho'(x+ty)\rb$. Lipschitz continuity of $\rho'$ follows from \cite[Lemma V.3.5]{Deville} and implies the Lipschitz continuity of $\rho_{x,y}'$.
It follows that the second derivative $\rho_{x,y}''(t)$ exists for almost every $t\in\R$, and in the points where it exists it is given by
\begin{align*} \rho_{x,y}''(t)
 & = \lim_{h\to 0} \frac1{h^2}((\rho_{x,y}(t+h) + \rho_{x,y}(t-h) - 2\rho_{x,y}(t))
 \\ & = \lim_{h\to 0}  \frac1{h^2}( \n (x + ty)+hy\n^2 + \n (x + ty)-hy\n^2 - 2\n x+ty\n^2).
 \end{align*}
Therefore, by $2$-smoothness, $ \rho_{x,y}''(t) \le 2D^2 \n y\n^2$ in these points. This implies that $\rho_{x,y}'(t) - \rho_{x,y}'(s) \le 2D^2(t-s)\n y\n^2$ for all $t\ge s$.

\smallskip
\eqref{it:mian-p2-2}$\Rightarrow$\eqref{it:mian-p2-1}: \
For all $x,y\in X$ we have
\begin{align*}
\n x + y\n^2 + \n x -y\n^2 - 2\n x\n^2 & = \int_0^1 \rho_{x,y}'(t)\ud t - \int_{-1}^0 \rho_{x,y}'(t)\ud t
\\ & = \int_0^1 \rho_{x,y}'(t)- \rho_{x,y}'(t-1)\ud t \le 2D^2\n y\n^2.
\end{align*}
\end{proof}

As an application we prove the following vector-valued analogue of \cite[Proposition 2.1]{Pin}. It will be needed in the proof of Proposition \ref{prop:stochasticellinftyn}, which in turn is applied in Section \ref{sec:numerical}.

\begin{proposition}\label{prop:LpX-2mooth}
Let $(S,\mathscr{A},\mu)$ be a measure space and $X$ be a $(2,D)$-smooth Banach space. Then for all $2\le p<\infty$ the space $L^p(S;X)$ is $(2,\sqrt{p-2+D^2})$-smooth.
\end{proposition}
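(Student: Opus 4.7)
The plan is to verify condition \eqref{it:mian-p2-2} of Proposition \ref{prop:mian-p2} with $D$ replaced by $D_p := \sqrt{p-2+D^2}$: namely, for every $f, g \in L^p(S;X)$, I will show that the function $\rho_{f,g}(t) := \|f+tg\|_{L^p(S;X)}^2$ is differentiable with Lipschitz derivative satisfying $\rho_{f,g}'(t) - \rho_{f,g}'(s) \le 2D_p^2(t-s)\|g\|_{L^p(S;X)}^2$ for all $t \ge s$.

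First I would set $u(\xi, t) := \|f(\xi) + tg(\xi)\|_X^2$ and $H(t) := \int_S u(\xi,t)^{p/2}\ud\mu(\xi)$, so that $\rho_{f,g}(t) = H(t)^{2/p}$. Applying Proposition \ref{prop:mian-p2} to $X$ pointwise in $\xi$ gives that $u(\xi,\cdot) \in C^1(\R)$ with $u_t(\xi,\cdot)$ Lipschitz and $u_{tt}(\xi,t) \le 2D^2 \|g(\xi)\|_X^2$ almost everywhere; combining the identity $u_t(\xi,t) = \lb g(\xi), \rho'(f(\xi)+tg(\xi))\rb$ with the elementary bound $\|\rho'(z)\|_{X^*} \le 2\|z\|_X$ additionally yields $u_t(\xi,t)^2 \le 4u(\xi,t) \|g(\xi)\|_X^2$. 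Differentiation under the integral is justified on any bounded $t$-interval by dominated convergence (the relevant integrands are dominated by polynomial expressions in $\|f\|_X$ and $\|g\|_X$ which lie in $L^1(\mu)$ by H\"older), leading to $H''(t) = \int_S v_{tt}(\xi,t)\ud\mu(\xi)$ a.e., with
\[ v_{tt}(\xi,t) = \tfrac{p}{2}\bigl(\tfrac{p}{2}-1\bigr) u^{p/2-2} u_t^2 + \tfrac{p}{2} u^{p/2-1} u_{tt}, \]
the first summand interpreted as $0$ at isolated zeros of $u(\xi,\cdot)$ via $u_t^2 \le 4u\|g\|_X^2$.

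Feeding in the pointwise smoothness estimates yields
\[ v_{tt}(\xi,t) \le 2p\bigl(\tfrac{p}{2}-1\bigr) u^{p/2-1}\|g(\xi)\|_X^2 + pD^2 u^{p/2-1}\|g(\xi)\|_X^2 = p(p-2+D^2)\|f(\xi)+tg(\xi)\|_X^{p-2}\|g(\xi)\|_X^2, \]
and H\"older's inequality with conjugate exponents $p/(p-2)$ and $p/2$ then gives $H''(t) \le p(p-2+D^2) H(t)^{(p-2)/p}\|g\|_{L^p(S;X)}^2$. Expanding
\[ \rho_{f,g}''(t) = \tfrac{2}{p}\bigl(\tfrac{2}{p}-1\bigr) H(t)^{2/p - 2}H'(t)^2 + \tfrac{2}{p} H(t)^{2/p - 1} H''(t), \]
the first summand is non-positive for $p \ge 2$, while the exponent identity $\bigl(\tfrac{2}{p}-1\bigr)+\tfrac{p-2}{p}=0$ reduces the second to the bound $\rho_{f,g}''(t) \le 2D_p^2\|g\|_{L^p(S;X)}^2$ almost everywhere. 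Since $\rho_{f,g}$ is convex (as the square of the convex, non-negative map $t \mapsto \|f+tg\|_{L^p(S;X)}$), the derivative $\rho_{f,g}'$ is non-decreasing and absolutely continuous, and integrating the a.e.\ bound delivers $\rho_{f,g}'(t)-\rho_{f,g}'(s) \le 2D_p^2(t-s)\|g\|_{L^p(S;X)}^2$ for $t \ge s$.

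The main technical obstacle I anticipate is the careful handling of any point $t_0$ with $H(t_0) = 0$, where the Leibniz expansion of $\rho_{f,g}''$ is formally singular. Such a $t_0$ can occur only if $f = -t_0 g$ almost everywhere on $S$, in which case $\rho_{f,g}(t) = (t-t_0)^2\|g\|_{L^p(S;X)}^2$ is smooth and the bound holds trivially (since $p-2+D^2 \ge 1$). Outside this degenerate scenario $H$ is strictly positive throughout $\R$ and the preceding calculation runs without obstruction; patching the two cases together and invoking Proposition \ref{prop:mian-p2} yields $(2, \sqrt{p-2+D^2})$-smoothness of $L^p(S;X)$.
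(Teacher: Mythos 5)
Your approach is essentially the same as the paper's: both proofs verify the criterion in Proposition~\ref{prop:mian-p2}\eqref{it:mian-p2-2} for $L^p(S;X)$ by computing the second derivative of $t\mapsto\|f+tg\|_p^2$ in terms of the second derivative of $t\mapsto\|f+tg\|_p^p$ (your $H$ is the paper's $W_{p;f,g}$), then feeding in the pointwise smoothness estimates for $X$, applying H\"older with exponents $p/(p-2)$ and $p/2$, and discarding a non-positive term when $p\geq 2$. Your algebra, including the bound $u_t^2\leq 4u\|g(\xi)\|^2$ from $\|\rho'(z)\|\leq 2\|z\|$, is correct and matches the paper's computations after change of notation.

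There is, however, a genuine gap in your justification of $H''(t)=\int_S v_{tt}(\xi,t)\,\dd\mu(\xi)$ a.e. You appeal to dominated convergence, but the obstruction here is not integrability bounds: it is that for each $\xi$, the second $t$-derivative of $u(\xi,\cdot)=\|f(\xi)+tg(\xi)\|_X^2$ exists only for $t$ outside a Lebesgue-null set that \emph{depends on $\xi$}. Without controlling this $\xi$-dependence you cannot conclude that for a.e.\ $t$ the integrand $v_{tt}(\xi,t)$ is defined for a.e.\ $\xi$, nor directly pass the derivative inside the integral. The paper handles this by restricting at exactly this point to \emph{simple} $f,g\in L^p(S;X)$ — so that only finitely many pairs $(x,y)$ occur and one can work on the complement of a single null set — then derives the $2$-smoothness inequality for simple functions and extends to general $f,g$ by approximation. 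Your proposal could be repaired either by adopting this simple-function reduction, or by making a careful Fubini argument (one must first establish that the set of non-differentiability points is product measurable in $(\xi,t)$, then invoke Fubini to conclude that for a.e.\ $t$ the section is $\mu$-null, and then argue that $H'$ is absolutely continuous with derivative given by the integral of $v_{tt}$). As written, the phrase "by dominated convergence" does not bridge this step. Your treatment of the degenerate case $H(t_0)=0$ is a nice observation but is not the main technical difficulty.
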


Notice that $D\ge 1$ implies $p-2+D^2\le D^2(p-1)$, so in particular $L^p(S;X)$ is $(2,D\sqrt{p-1})$-smooth.

\begin{proof}
The proof is based on the equivalence in Proposition \ref{prop:mian-p2}.
For Banach spaces $X$ with the property that
 $\|\cdot\|^2$ is twice continuously Fr\'echet differentiable the proof can be somewhat simplified.

Throughout the proof we use $\n\cdot\n$ and $\n \cdot\n_p$ to denote the norms of $X$ and $L^p(S;X)$, respectively. Thus if $f\in L^p(S;X)$, then $\n f\n$ is the function $s\mapsto \n f(s)\n$ in $L^p(S)$.

 As in \cite[Theorem V.1.1]{Deville} one checks that the functions
 $$\psi_p(x):= \n x\n^p, \quad \Psi_{p}(g):= \n g\n_p^p,$$
 are Fr\'echet differentiable and
 \begin{align}\label{eq:Psipprime}
 \lb f,\Psi_{p}'(g)\rb = \int_S \lb f,\psi_p'(g)\rb\ud \mu, \qquad f,g\in L^p(S;X),
 \end{align}
 where the duality $\lb\cdot,\cdot\rb$ between $X$ and $X^*$ is applied pointwise on $S$.
For $q\in \R$ let
 \begin{align*}
w_{q;x,y}(t) & := \n x+ty\n^q, \qquad x,y\in X; \\
W_{q;f,g}(t) & := \n f+tg\n_p^q, \qquad f,g\in L^p(S;X).
 \end{align*}
The Fr\'echet differentiability of $\psi_p$ and $\Psi_p$ implies the differentiability of $w_{q;x,y}$ and $W_{q;f,g}$ (except possibly at $t=0$ when $x=0$ and $y\not=0$, respectively $f=0$ and $g\not=0$).
Denoting derivatives with respect to $t$ by $\partial_t$,
for $q\not=0$ the chain rule gives
\begin{align*}
\partial_t w_{q;x,y}(t) & = \frac{q}{2} \n x+ty\n^{q-2}\partial_t w_{2;x,y}(t) = q \n x+ty\n^{q-2}\lb y,\psi_{2}'(x+ty)\rb \\
\partial_t W_{q;f,g}(t) & = \frac{q}{2} \n f+tg\n_p^{q-2}\partial_t W_{2;f,g}(t) = q \n f+tg\n_p^{q-2}\lb g,\Psi_{2}'(f+tg)\rb,
\end{align*}
where $\Psi_{2}(g):= \n g\n_p^2$.
Also, $$\psi_p'(x) = \frac{p}{2}\n x\n^{p-2}\psi_2'(x), \quad \Psi_p'(f) = \frac{p}{2}\n f\n_p^{p-2}\Psi_2'(f).$$
Combining these identities with \eqref{eq:Psipprime}, we obtain
\begin{equation}\label{eq:partialW2fg}
\begin{aligned}
\frac12\partial_t W_{2;f,g}(t)
& = \lb g,\Psi_2'(f+tg)\rb
\\ & = \frac2p \n f+tg\n_p^{2-p} \lb g,\Psi_p'(f+tg)\rb
\\ & = \frac2p \n f+tg\n_p^{2-p} \int_S  \lb g, \psi_p'(f+tg)\rb\ud \mu
\\ & = \n f+tg\n_p^{2-p} \int_S \n f+tg\n^{p-2} \lb g, \psi_2'(f+tg)\rb\ud \mu
\\ & = \frac1p \n f+tg\n_p^{2-p} \int_S \partial_t w_{p;f,g}(t)\ud \mu.
\end{aligned}
\end{equation}
Since $X$ is $2$-smooth and Lipschitz functions are almost everywhere differentiable, for all $x,y\in X$ the function $w_{2;x,y}$ is twice differentiable almost everywhere by Proposition \ref{prop:mian-p2}. The exceptional set may depend on the pair $(x,y)$, however, so in order to be able to differentiate the right-hand side of \eqref{eq:partialW2fg} under the integral we will consider {\em simple} functions $f,g\in L^p(S;X)$ from this point onward. Then the right-hand side of \eqref{eq:partialW2fg}
is differentiable for almost all $t\in\R$ and
\begin{align*}
 \ & \partial_t^2 W_{2;f,g}(t)
 \\ & \qquad = \frac2p \partial_t (\n f+tg\n_p^{2-p}) \int_S \partial_t w_{p;f,g}(t)\ud \mu +
 \frac2p  \n f+tg\n_p^{2-p} \partial_t \int_S \partial_t w_{p;f,g}(t)\ud \mu
\\ &  \qquad = \frac2p \partial_t ((W_{2;f,g}(t))^{1-\frac{p}{2}}) \int_S \partial_t w_{p;f,g}(t)\ud \mu +
 \frac2p \n f+tg\n_p^{2-p}\!  \int_S \partial_t^2 w_{p;f,g}(t)\ud \mu
 \\ & \qquad  = \Big(\frac{2}{p}-1\Big)\n f+tg\n_p^{-p}\partial_t W_{2;f,g}(t)\int_S \partial_t w_{p;f,g}(t)\ud \mu
\\ &  \qquad \qquad  + \frac2p\n f+tg\n_p^{2-p}\int_S \partial_t^2 w_{p;f,g}(t)\ud \mu
 \\ &  \qquad \stackrel{(*)}{=} \frac2p\Big(\frac{1}{p}-\frac12\Big)\n f+tg\n_p^{2-2p}\Bigl(\int_S \partial_t w_{p;f,g}(t)\ud \mu\Bigr)^2
\\ &  \qquad \qquad  + \frac2p\n f+tg\n_p^{2-p}\int_S \partial_t^2 w_{p;f,g}(t)\ud \mu
\\ &
 \qquad  \stackrel{(**)}{\le}  \frac2p\n f+tg\n_p^{2-p}\int_S \partial_t^2 w_{p;f,g}(t)\ud \mu,
\end{align*}
where $(*)$ follows from \eqref{eq:partialW2fg} and $(**)$ from the assumption $2\le p<\infty$.
Now
\begin{align*}
 \partial_t^2 w_{p;x,y}(t)
 & = p \partial_t [\n x+ty\n^{p-2} \lb y, \psi_2'(x+ty)\rb]
\\ & =  p\partial_t(\n x+ty\n^{p-2})\lb y, \psi_2'(x+ty)\rb + p\n x+ty\n^{p-2} \partial_t \lb y, \psi_2'(x+ty)\rb
\\ & = 2p\Big(\frac{p}{2}-1\Big)\n x+ty\n^{p-4}\lb y, x+ty\rb^2 + \frac{p}{2}\n x+ty\n^{p-2} \partial_t^2 w_{2;x,y}(t)
\\ & \le 2p\Big(\frac{p}{2}-1\Big)\n x+ty\n^{p-2}\n y\n^2 + pD^2\n x+ty\n^{p-2} \n y\n^2.
\end{align*}
Applying this with $x = f(\cdot)$ and $y = g(\cdot)$ we obtain
\begin{align*}
\partial_t^2 W_{2;f,g}(t)
 & \le 2(p-2 +D^2) \n f+tg\n_p^{2-p}\int_S  \n f+tg\n^{p-2} \n g\n^2\ud \mu.
\end{align*}
By H\"older's inequality with $r = p/(p-2)$ and $r' = p/2$ we obtain that $W_{2;f,g}$ is twice differentiable almost everywhere and
\begin{align}\label{eq:partial2W2fg} \partial_t^2 W_{2;f,g}(t) \le 2(p-2+D^2)\n g\n_p^2.
\end{align}
Since $f$ and $g$ are simple, the $2$-smoothness of $X$ and Proposition \ref{prop:mian-p2} imply that $t\mapsto \partial_t W_{2;f,g}$ is Lipschitz continuous.
Therefore it follows from \eqref{eq:partial2W2fg} that $t\mapsto \partial_t W_{2;f,g}$ is Lipschitz continuous with Lipschitz constant $2(D^2+p-2)\n g\n_p^2$. The proof of the implication
\eqref{it:mian-p2-2}$\Rightarrow$\eqref{it:mian-p2-1} of Proposition \ref{prop:mian-p2} then gives the inequality
\begin{align*}
	 \n  f+g\n^2+\n  f-g\n^2\leq 2\n  f\n^p+ 2(D^2+p-2)\n  g\n^2
\end{align*}
for simple $f,g\in L^p(S;X)$. The inequality for general  $f,g\in L^p(S;X)$ follows by approximation.
\end{proof}

\begin{remark}
By Pisier's characterisation of $2$-smoothness in terms of the modulus of uniform smoothness \cite{Pis},
the fact that $2$-smoothness of $X$ implies the  $2$-smoothness of $L^p(\mu;X)$ for all $2\le p<\infty$
follows from \cite{Figiel76}. A quantitative version is proved in \cite[Corollary 2.3]{Naor}
where it is shown that if the modulus of uniform smoothness of a Banach space satisfies  $ \varrho_X(\tau) \le s \tau^2$ for all $\tau>0$, then the modulus of uniform smoothness of $L^p(\mu;X)$ satisfies
\begin{align}\label{eq:Naor}
\varrho_{L^p(\mu,X)}(\tau)\le (4s+4p)\tau^2, \quad \tau>0.
\end{align}
By Pisier's result, this implies that  $L^p(\mu;X)$ is $(2,E)$-smooth for some $E\ge 1$, but the bound for $E$ obtained this way is worse than ours. We will show this by demonstrating that our Proposition \ref{prop:LpX-2mooth} gives a slight improvement of the constant \eqref{eq:Naor}.
Indeed, by \cite[Proposition 3.1.2]{Woy}, the bound $\varrho_X(\tau) \le s \tau^2$ for $\tau>0$
implies that $X$ is $(2,\sqrt{1+4s})$-smooth.
Consequently Proposition \ref{prop:LpX-2mooth} implies that
$L^p(\mu;X)$ is $(2,\sqrt{p-1+4s})$-smooth. Another application of \cite[Proposition 3.1.2]{Woy}
then gives that
\begin{align*}
\varrho_{L^p(\mu;X)}(\tau) \le (4s+p-1)\tau^2, \quad \tau>0.
\end{align*}
\end{remark}

Following \cite{Pin} we will use Proposition \ref{prop:mian-p2}
to derive some further useful inequalities for the function
$$w (t):= w_{x,y}(t):= (\rho(x+ty))^{1/2} = \n x+ ty\n, $$
where $x$ and $y$ are fixed elements in a $(2,D)$-smooth Banach space.
Evidently  $w$ is Lipschitz continuous with $|w(t) - w(s)|
\le |t-s|\n y\n $, so $w$ is almost everywhere differentiable with
\begin{align}\label{eq:up} |w'(t)| \le \n y\n.
\end{align}

We start from the elementary observation that $\sinh a \le a\cosh a$ for $a\ge 0$.
Hence when $w''w \ge 0$, Proposition \ref{prop:mian-p2} implies the almost everywhere inequalities
\begin{align*} (\cosh w)'' & = (w')^2\cosh w + w''\sinh w \\ & \le
((w')^2 + w''w)\cosh w = \tfrac12 (w^2)''\cosh w \le D^2 \n y\n^2\cosh w,
\end{align*}
whereas if $w''w<0$, then \eqref{eq:up} implies
$$ (\cosh w)'' = (w')^2\cosh w + w''\sinh w \le (w')^2\cosh w \le \n y\n^2\cosh w.$$
Combining these inequalities we obtain the almost everywhere inequality
\begin{align}\label{eq:coshpp}
(\cosh w)'' \le D^2 \n y\n^2\cosh w.
\end{align}

The next lemma was obtained in \cite[Proposition 2.5 and the proof of Theorem 3.2]{Pin}. We present a more direct argument which avoids the smoothing procedure and reduction to the finite dimensional setting used in \cite[Lemma 2.2, Lemma 2.3, and Remark 2.4]{Pin}.

\begin{lemma}\label{lem:folklorePinelis}
Let $X$ be a $(2,D)$-smooth Banach space
and let $\xi,\eta\in L^2(\Omega;X)$. Let $\G\subseteq \F$ be a sub-$\sigma$-algebra. If $\xi$ is strongly $\G$-measurable and $\E_{\G}{\eta}= 0$, then
\begin{align*}
\E_{\mathscr{G}}(\|\xi+\eta\|^2) &\leq \|\xi\|^2 + D^2 \E_{\mathscr{G}}(\|\eta\|^2).
\end{align*}
If, moreover, $\xi,\eta\in L^\infty(\Omega;X)$, then
\[\E_{\mathscr{G}}(\cosh(\|\xi+\eta\|))\leq \big(1+D^2 \E_{\mathscr{G}}(e^{\|\eta\|}-1 - \|\eta\|)\big) \cosh(\|\xi\|).\]
\end{lemma}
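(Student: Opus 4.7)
The strategy is to Taylor expand both $t \mapsto \|\xi+t\eta\|^2$ and $t \mapsto \cosh(\|\xi+t\eta\|)$ around $t=0$ and evaluate at $t=1$. In each case the first-order term is linear in $\eta$ with a $\mathscr{G}$-measurable coefficient, so it vanishes after applying $\E_{\mathscr{G}}$ via the pull-out property and the assumption $\E_{\mathscr{G}}\eta=0$. The second-order remainder is then controlled by Proposition \ref{prop:mian-p2} (for the quadratic inequality) and by its consequence \eqref{eq:coshpp} (for the $\cosh$ inequality).

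For the quadratic bound I would set $\rho(x):=\n x\n^2$ and $\rho_{\xi,\eta}(t):=\n \xi+t\eta\n^2$. Proposition \ref{prop:mian-p2} gives $\rho_{\xi,\eta}'(t) - \rho_{\xi,\eta}'(0) \leq 2D^2 t \n\eta\n^2$ for $t \geq 0$, and integrating from $0$ to $1$ yields the pointwise estimate
\[ \n \xi+\eta\n^2 \leq \n \xi\n^2 + \lb \eta,\rho'(\xi)\rb + D^2\n \eta\n^2. \]
Because $\rho':X\to X^*$ is (Lipschitz) continuous and $\xi$ is strongly $\mathscr{G}$-measurable, $\rho'(\xi)$ is a strongly $\mathscr{G}$-measurable element of $X^*$. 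Integrability of $\lb \eta,\rho'(\xi)\rb$ follows from $\n\rho'(\xi)\n_{X^*}\le 2\n\xi\n$ together with $\xi,\eta\in L^2(\Omega;X)$, and the pull-out property gives $\E_{\mathscr{G}}\lb \eta,\rho'(\xi)\rb = \lb \E_{\mathscr{G}}\eta,\rho'(\xi)\rb=0$. Taking $\E_{\mathscr{G}}$ of the pointwise bound gives the first claim.

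For the $\cosh$ inequality, let $w(t):=\n \xi+t\eta\n$. I would first write $\cosh(w(t)) = F(\rho_{\xi,\eta}(t))$ where $F(s):=\sum_{k\ge 0} s^k/(2k)!$ is the entire function agreeing with $\cosh(\sqrt{s})$ on $[0,\infty)$. This representation shows that $t\mapsto \cosh(w(t))$ is $C^1$ on $\R$ with Lipschitz derivative on bounded intervals, bypassing the potential non-differentiability of $w$ itself at zeros of $\xi+t\eta$. Taylor's formula with integral remainder then gives
\[ \cosh(w(1)) = \cosh(w(0)) + (\cosh w)'(0) + \int_0^1 (1-s)(\cosh w)''(s)\,\dd s, \]
and by \eqref{eq:coshpp}, $w(s)\leq \n\xi\n+s\n\eta\n$, and the monotonicity of $\cosh$ on $[0,\infty)$,
\[ (\cosh w)''(s) \leq D^2\n\eta\n^2 \cosh(\n\xi\n+s\n\eta\n) \quad\text{a.e.} \]
Using the addition formula $\cosh(a+b)=\cosh a\cosh b+\sinh a\sinh b$ with $b=\n\eta\n$ together with the elementary identities $\int_0^1(1-s)b^2\cosh(sb)\,\dd s = \cosh b - 1$ and $\int_0^1(1-s)b^2\sinh(sb)\,\dd s = \sinh b - b$, the remainder integral is bounded by $\cosh\n\xi\n(\cosh\n\eta\n-1) + \sinh\n\xi\n(\sinh\n\eta\n-\n\eta\n)$, and since $\sinh\n\xi\n\leq \cosh\n\xi\n$ and $\cosh b - 1 + \sinh b - b = e^b-1-b$, this is at most $\cosh\n\xi\n(e^{\n\eta\n}-1-\n\eta\n)$. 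Finally $(\cosh w)'(0) = F'(\n\xi\n^2)\lb \eta,\rho'(\xi)\rb$ is, once again, a $\mathscr{G}$-measurable scalar multiplied by a $\mathscr{G}$-measurable linear functional applied to $\eta$, so $\E_{\mathscr{G}}(\cosh w)'(0)=0$ by the same pull-out argument. Combining these ingredients after taking $\E_{\mathscr{G}}$ delivers the second inequality. The only subtle technical point is verifying the Lipschitz regularity of $(\cosh w)'$ needed to justify the Taylor remainder formula; the entire-function representation $\cosh\circ w = F\circ\rho_{\xi,\eta}$ handles this cleanly because $\rho_{\xi,\eta}$ is $C^1$ with Lipschitz derivative by Proposition \ref{prop:mian-p2} and $F$ is smooth.
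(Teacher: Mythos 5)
Your proof is correct and follows the paper's argument essentially verbatim: both parts Taylor-expand around $t=0$, kill the linear term via $\E_{\mathscr G}\lb\rho'(\xi),\eta\rb=0$, control the quadratic remainder via Proposition \ref{prop:mian-p2} and \eqref{eq:coshpp}, and use the entire function $\zeta(z)=\cosh(z^{1/2})$ to justify the Taylor remainder formula for $\cosh w$. The only cosmetic difference is in the final integral: you split $\cosh(\|\xi\|+s\|\eta\|)$ via the addition formula and apply $\sinh\|\xi\|\le\cosh\|\xi\|$ at the end, whereas the paper applies the bound $\cosh(\|x\|+s\|y\|)\le e^{s\|y\|}\cosh\|x\|$ first and integrates $\int_0^1(1-s)\|y\|^2 e^{s\|y\|}\,\dd s = e^{\|y\|}-1-\|y\|$ in one stroke; both yield the identical constant.
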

\begin{proof}
Fix $x,y\in X$. As before we let $\rho_{x,y}(t): =  (w_{x,y}(t))^2= \|x+ty\|^2 = \rho(x+ty)$ for $t\in \R$.
Then $\rho_{x,y}$ is continuously differentiable and $\rho_{x,y}'$ is Lipschitz continuous with constant  $2D^2\|y\|^2$ by Proposition \ref{prop:mian-p2}.
Taylor's formula then gives
\begin{align*}\|x+ty\|^2 = \rho_{x,y}(t) & = \rho_{x,y}(0) + t \rho_{x,y}'(0) + \int_0^t \rho_{x,y}'(s)-\rho_{x,y}'(0) \ud s
\\ & \leq \|x\|^2 + t \lb \rho'(x), y\rb+ t D^2\|y\|^2.
\end{align*}
Setting $x= \xi(\om)$, $y=\eta(\om)$, $t=1$, and taking conditional expectations, we obtain
\[\E_{\mathscr{G}}(\|\xi+\eta\|^2)\leq \|\xi\|^2 + \E_{\mathscr{G}}(\lb \rho'(\xi), \eta\rb)+ D^2\E_{\mathscr{G}}(\|\eta\|^2).\]
It remains to note that
$\E_{\mathscr{G}}\lb \rho'(\xi), \eta\rb = \lb \rho'(\xi), \E_{\mathscr{G}}\eta\rb = 0$.

For the second assertion note that the function $\zeta:\C\to \C$ defined by $\zeta(z) := \cosh(z^{1/2})$, is entire. Let $u(t) : = \cosh(\|x+ty\|) = \cosh(w(t))= \zeta(\rho_{x,y}(t))$. By \eqref{eq:coshpp},
\begin{align*}
 u(t) & = u(0) + t u'(0)
 + \int_0^t (t-s) u''(s) \ud s
\\ & \leq \cosh(\|x\|) + t \rho_{x,y}'(0)\zeta'(\|x\|^2) \lb \rho'(x), y\rb
+ D^2 \n y\n^2\!\!\int_0^t (t-s)\cosh(\|x+sy\|)\ud s.
\end{align*}
Since $\cosh(\|x+sy\|) \leq \cosh(\|x\|+s\|y\|)\leq e^{s\|y\|}\cosh(\|x\|)$ for $s\ge 0$, the integral on the right-hand side satisfies
\begin{align*}
 \n y\n^2\int_0^t (t-s) \cosh(\|x+sy\|) \ud s& \leq  \cosh(\|x\|) \n y\n^2\int_0^t (t-s) e^{s\|y\|}\ud s
\\ & = \cosh(\|x\|) (e^{t\|y\|} - 1 - t\|y\|).
\end{align*}
Combining the estimates with $x= \xi(\om)$, $y=\eta(\om)$, $t=1$, and taking conditional expectations, we obtain
\begin{align*}
\ & \E_{\mathscr{G}}(\cosh(\|\xi+\eta\|))
\\ & \quad \leq \cosh(\|\xi\|)
+\rho_{x,y}'(0) \zeta'(\|\xi\|^2) \E_{\mathscr{G}}(\lb \rho'(\xi), \eta\rb)
+ D^2 \E_{\mathscr{G}}(e^{\|\eta\|}-1 - \|\eta\|) \cosh(\|\xi\|).
\end{align*}
The result follows from this by using once more that $\E_{\mathscr{G}}(\lb \rho'(\xi), \eta\rb) = 0$.
\end{proof}

\begin{remark}\label{rem:Rad}
Applying the first part of this lemma iteratively to Rademacher sums, we obtain the folklore result that $(2,D)$-smoothness implies martingale type $2$ with constant $D$.
\end{remark}

\subsection{Stochastic integration in $2$-smooth Banach spaces}\label{subsec:SI}

Let $\H$ a Hilbert space. An {\em $\H$-isonormal process} on $\Om$ is a
mapping $\mathscr{W}: \H\to L^2(\Om)$ with the following two properties:
\begin{enumerate}
\item[\rm(i)] For all $h\in \H$ the random variable $\mathscr{W}h$ is Gaussian;
\item[\rm(ii)] For all $h_1,h_2\in \H$ we have
$\E (\mathscr{W}h_1 \cdot\mathscr{W}h_2) = (h_1|h_2)$.
\end{enumerate}

It is easy to see that every $\H$-isonormal process is linear and that
for all $h_1,\dots,h_N\in \H$ the $\R^N$-valued random variable
$(\mathscr{W}h_1,\dots, \mathscr{W}h_N)$ is jointly Gaussian.
For more details the reader is referred to \cite{HNVW4, Nua}.

If $H$ is another Hilbert space, an {\em $H$-cylindrical Brownian motion} indexed by $[0,T]$ is an isonormal process $W: L^2(0,T;H) \to L^2(\Om)$.
Following common practice we write $$W_t h:= W(\one_{(0,t)}\otimes h), \qquad t\in [0,T], \ h\in H.$$
For each $h\in H$, the scalar-valued process
$Wh= (W_th)_{t\in [0,T]}$ is then a Brownian motion, which is standard if and only if $h$ has norm one. Two such Brownian motions $Wh_1$ and $Wh_2$ are independent if and only if $h_1$ and $h_2$ are orthogonal in $H$.
We say that $W$ is {\em adapted} to the
filtration $(\F_t)_{t\in [0,T]}$ on $\Om$ if $W(f\otimes h)\in L^2(\Om,\F_t)$ for all $f\in L^2(0,T)$ supported in $(0,t)$ and all $h\in H$. In what follows we always assume that $H$-cylindrical Brownian motions are adapted to $(\F_t)_{t\in [0,T]}$.

The space of finite rank operators from a Hilbert space $H$ into a Banach space $X$ is denoted by $H\otimes X$.
Every finite rank operator $T\in H\otimes X$ can be represented in the form $T = \sum_{n=1}^N h_n\otimes x_n$ with $(h_n)_{n=1}^N$ orthonormal in $H$ and
$(x_n)_{n=1}^N$ a sequence in $X$. We then define
\begin{align}\label{eq:def-radonif} \n T\n_{\gamma(H,X)}^2 = \E \Big\n \sum_{n=1}^N \gamma_n x_n\Big\n^2,
\end{align}
where $(\gamma_n)_{n=1}^N$ is a sequence of independent standard Gaussian random variables. It is an easy consequence of the preservation of joint Gaussianity under orthogonal transformations that the norm $\n \cdot\n_{\gamma(H,X)}$ is well defined.
The completion of $H\otimes X$ with respect to this norm is denoted by $\gamma(H,X)$. The natural inclusion mapping from $H\otimes X$ into $\calL(H,X)$ extends to a contractive inclusion mapping $\gamma(H,X)\subseteq \calL(H,X)$.
A linear operator $T\in\calL(H, X)$ is said to be {\em $\gamma$-radonifying} if it belongs to $\gamma(H,X)$. For $1\le p<\infty$ the Kahane--Khintchine inequalities guarantee that replacing $L^2$-norms by $L^p$-norms in \eqref{eq:def-radonif} gives an equivalent norm on $\gamma(H,X)$. The space $\gamma(H,X)$, when endowed with this equivalent norm, will be denoted by $\gamma_p(H,X)$.

For Hilbert spaces $K$ we have $$\gamma(H,K) =\calL_2(H,K)$$ isometrically, where $\calL_2(H,K)$ is the space of Hilbert--Schmidt
operators from $H$ to $K$. For $1\le p<\infty$ and any Banach space $X$ the identity mapping on $H\otimes L^p(\mu)$
extends to an isometric isomorphism of Banach spaces
\begin{align}\label{eq:gammaLq} \gamma_p(H, L^p(\mu)) \simeq L^p(\mu;H).
\end{align}
For $H = L^2(\nu)$ this identifies $\gamma(L^2(\nu), L^p(\mu))$ with the space
$L^p(\mu;L^2(\nu))$ of `square functions' using terminology from harmonic analysis. For more details the reader is referred to \cite[Chapter 9]{HNVW17}.

A stochastic process $\Phi :[0,T]\times \Om\to \calL(H,X)$ is called an {\em adapted finite rank step process} if there exist $0=s_0<s_1<\ldots<s_n=T$, random variables $\xi_{ij}\in L^\infty(\Omega,\F_{s_{j-1}})\otimes X$ (the subspace of $L^\infty(\Omega;X)$ of strongly $\F_{s_{j-1}}$-measurable random variables taking values in a finite-dimensional subspace of $X$)
for $i=1, \ldots, m$ and $j=1, \ldots, n$, and an orthonormal system $h_1,\dots, h_m$ in $H$ such that
\begin{equation}\label{eq:simple}
\Phi  = \sum_{j=1}^{n} \one_{(s_{j-1},s_{j}]} \sum_{i=1}^m h_i\otimes \xi_{ij}.
\end{equation}
For such processes the stochastic integral with respect to the $H$-cylindrical Brownian motion $W$
is defined by
\[\int_0^t \Phi_s \ud W_s := \sum_{j=1}^n  \sum_{i=1}^m  (W_{s_{j}\wedge t}-W_{s_{j-1}\wedge t})h_i \otimes \xi_{ij}, \  \ t\in [0,T].\]
Since $t\mapsto W_th$, being a Brownian motion, has a continuous modification, it follows that $t\mapsto \int_0^t\Phi_s \ud W_s$ has a continuous modification. Such modifications will always be used in the sequel.
It was shown by Neidhardt in his PhD thesis \cite{Nei} (see also \cite{Dett89}, \cite{NVW13})
that if $\Phi$ is an adapted finite rank step process, then
\begin{align}\label{eq:Neid}
\E \Big\n \int_0^T \Phi_t \ud W_t\Big\n^2\le D^2 \|\Phi\|_{L^2(\Omega;L^2(0,T;\gamma(H,X)))}^2.
\end{align}
By \eqref{eq:Neid}, standard localisation arguments, and Doob's inequality, the stochastic integral can be extended to arbitrary progressively measurable processes $\Phi: [0,T]\times \Om\to\gamma(H,X)$ for which the $L^2(0,T;\gamma(H,X))$-norm is finite almost surely and the resulting stochastic integral process $(\int_0^t \Phi_s \ud W_s)_{t\in [0,T]}$
has a continuous modification. At this juncture it is useful to observe that a process $\Phi: [0,T]\times \Om\to\gamma(H,X)$
is progressively measurable (as a process with values in the Banach space $\gamma(H,X)$) if and only if
$\Phi h: [0,T]\times \Om\to X$ is progressively measurable (as a process with values in $X$) for all $h\in H$; this follows from
\cite[Example 9.1.16]{HNVW17}.

The following version of the classical Burkholder inequality is the result of contributions of many authors \cite{BD, Brz97, Brz3, Dett89, Dett91, Ondrejat04}.

\begin{proposition}\label{prop:Seid}
 Let $X$ be a $(2,D)$-smooth Banach space, let $W$ be an adapted $H$-cylindrical Brownian motion on $\Om$, and let $0<p<\infty$. For all adapted finite rank step process $\Phi: [0,T]\times \Om\to\gamma(H,X)$ we have
$$
\E \sup_{t\in [0,T]}\Big\n \int_0^t \Phi_s \ud W_s\Big\n^p\le C_{p,D}^p \|\Phi\|_{L^p(\Omega;L^2(0,T;\gamma(H,X)))}^p,
$$
where $C_{p,D}$ is a constant depending only on $p$ and $D$.
\end{proposition}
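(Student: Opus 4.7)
The plan is to reduce the continuous-time stochastic integral estimate to a discrete martingale inequality, applied to the martingale obtained by sampling $M_t := \int_0^t \Phi_s \ud W_s$ at the partition times of the step-process representation \eqref{eq:simple}, and then to upgrade to the supremum by Doob.

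Step 1 (Discretization and conditional Gaussian structure). With $\Phi = \sum_{j=1}^n \one_{(s_{j-1},s_j]}\sum_{i=1}^m h_i\otimes \xi_{ij}$, set $M_k := M_{s_k} = \sum_{j=1}^k d_j$, where
$$ d_j := \sum_{i=1}^m (W_{s_j}-W_{s_{j-1}})h_i \cdot \xi_{ij}.$$
Since each $\xi_{ij}$ is $\F_{s_{j-1}}$-measurable and the $h_i$ are orthonormal, $(d_j)$ is an $X$-valued $(\F_{s_k})$-martingale difference sequence, and conditional on $\F_{s_{j-1}}$ the vector $d_j$ is centred Gaussian in $X$. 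By the very definition \eqref{eq:def-radonif} of $\n \cdot \n_{\gamma(H,X)}$,
$$ \E\bigl[\n d_j \n^2 \,\big|\, \F_{s_{j-1}}\bigr] = (s_j - s_{j-1})\,\n \Phi_{s_j}\n_{\gamma(H,X)}^2, $$
and by Kahane--Khintchine, for each $p>0$ the analogous estimate holds for the $p$-th conditional moment up to a constant $K_p$.

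Step 2 (Case $p=2$). Applying Lemma \ref{lem:folklorePinelis} iteratively to $(M_k)$ (equivalently, using the martingale type $2$ property with constant $D$ from Remark \ref{rem:Rad}) and summing the resulting conditional second moments gives
$$ \E \n M_n\n^2 \leq D^2 \sum_{j=1}^n \E\n d_j\n^2 = D^2 \E \int_0^T \n \Phi_s\n_{\gamma(H,X)}^2\ud s. $$
Since $M$ is a continuous $X$-valued martingale, $\n M_t \n$ is a continuous submartingale, so Doob's $L^2$-maximal inequality upgrades this to the desired estimate for $p=2$ with constant $2D$.

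Step 3 (Case $p>2$). By Pisier's theorem mentioned in Section \ref{subsec:2smooth}, the martingale type $2$ property of $X$ combined with a standard Burkholder--Davis--Gundy good-$\lambda$ argument yields a discrete $L^p$-square-function inequality
$$ \E \n M_n\n^p \leq K_{p,D}^p \E \Bigl(\sum_{j=1}^n \n d_j\n^2\Bigr)^{p/2}. $$
To replace $\sum_j \n d_j\n^2$ by its predictable version $\sum_j (s_j-s_{j-1}) \n \Phi_{s_j}\n_{\gamma(H,X)}^2 = \int_0^T \n \Phi_s\n_{\gamma(H,X)}^2\ud s$, one uses decoupling: the sequence $(d_j)$ admits an $\F_T$-conditionally independent Gaussian tangent sequence, and by conditional Kahane--Khintchine its $L^p$-square function is dominated (up to a constant $K_{p,D}$) by $\|\Phi\|_{L^p(\Omega;L^2(0,T;\gamma(H,X)))}$. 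Finally, Doob's $L^p$-maximal inequality for the continuous submartingale $\n M_t\n$ yields the estimate with the supremum.

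Step 4 (Case $0<p<2$). Apply Lenglart's domination inequality to the continuous nonnegative submartingale $\n M_t\n$ dominated by the increasing predictable process $D^2 \int_0^{\cdot} \n \Phi_s\n_{\gamma(H,X)}^2\ud s$ (using the bound of Step 2); this provides the estimate for every $0<p<2$ at the expense of a universal constant depending on $p$.

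The main obstacle is Step 3: one has to move from the pathwise square function $\sum_j \n d_j\n^2$, which is a sum of squared norms of conditionally Gaussian $X$-valued vectors, to the predictable quantity $\int_0^T \n\Phi_s\n_{\gamma(H,X)}^2\ud s$. The decoupling/tangent-sequence argument together with conditional Kahane--Khintchine handles this cleanly, but this is the only non-routine point; all other steps are either an iteration of Lemma \ref{lem:folklorePinelis}, Doob's maximal inequality, or Lenglart's domination principle.
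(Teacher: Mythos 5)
Your plan is essentially correct, but it follows a genuinely different route from the one the paper has in mind. The paper does not prove this proposition directly; it cites it as folklore and observes that the sharp constant $C_{p,D}=10D\sqrt{p}$ is recovered by taking $S(t,s)\equiv I$ in Theorem~\ref{thm:contractionS-new}. That route applies the extended Burkholder--Rosenthal--Pinelis inequality (Theorem~\ref{thm:Pinelis}) to the discretised martingale, which bounds $\|f^\star\|_p$ by $5p\,\|dG\ss\|_p + 10D\sqrt{p}\,\|s(G)\|_p$, i.e.\ directly in terms of the \emph{predictable} square function, and the $\|dG\ss\|_p$ term is subsequently killed by refining the mesh (Step~1b of the proof of Theorem~\ref{thm:contractionS-new} shows it is $O(\mathrm{mesh}^{1/2-1/p})$). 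Your Step~3 instead passes through the \emph{pathwise} square function $[M]^{1/2}=\bigl(\sum_j\|d_j\|^2\bigr)^{1/2}$ via a discrete BDG inequality obtained by good-$\lambda$ extrapolation (note that $d\ss\le[M]^{1/2}$ is what lets the good-$\lambda$ comparison close without an extra maximal-increment term), and then compares $[M]$ with the predictable $s(M)^2$ by decoupling and conditional Kahane--Khintchine, exploiting the conditionally Gaussian increments. Both routes work: yours is closer to the classical scalar BDG proof and is self-contained, while the paper's yields the optimal $O(\sqrt{p})$ constant, whereas the decoupling/Rosenthal step in your argument gives at best $O(p)$ --- acceptable, since the proposition only asks for some $C_{p,D}$. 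Two points that deserve care if you were to write this in full: the good-$\lambda$ inequality in the Banach-valued setting needs a conditional $L^2$ estimate for the stopped martingale on each slice $\{\mu=m\}$, as in Lemma~\ref{lem:checkburk}, rather than a black-box application of the scalar result; and the tangent-sequence construction (following de la Pe\~{n}a--Gin\'{e} \cite{GinePena}, as in Step~2 of the proof of Theorem~\ref{thm:Pinelis}) has to be set up so that the conditional Kahane--Khintchine argument for the sum of independent nonnegative summands $\|\widetilde{d}_j\|^2$ is legitimate.
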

By using Pinelis's version of the Burkholder--Rosenthal inequalities \cite{Pin}, Seidler \cite{Sei} has shown that the constant $C_{p,D}$ has the same asymptotic behaviour for $p\to\infty$ as in the scalar-valued setting, i.e.,
$$C_{p,D} = C_D O(\sqrt{p}) \ \ \hbox{ as $p\to\infty$}.$$
As a special case of our main result we will recover Seidler's result, with $C_{p,D} = 10 D \sqrt{p}$ if $2\le p<\infty$, by setting $S(t,s) \equiv I$ in Theorem \ref{thm:contractionS-new}.

As a consequence of Proposition \ref{prop:Seid} we obtain the following result, which will be useful in the error analysis of numerical schemes for SPDEs in Section \ref{sec:numerical}.

\begin{proposition}\label{prop:stochasticellinftyn}
Let $X$ be a $(2,D)$-smooth Banach space and let $0< p< \infty$. Let $\Phi := (\Phi^{(k)})_{k=1}^n$ be a finite sequence in $ L^p_{\bF}(\Omega;L^2(0,T;\gamma(H,X)))$ and set
\[I^{\Phi}_n := \Big(\E \sup_{t\in [0,T], k\in \{1,\ldots,n\}}\Big\|\int_0^t \Phi_s^{(k)} \ud W_s\Big\|^p\Big)^{1/p}.\]
Then
\begin{align}
I^{\Phi}_n & \leq C_{p,D} \sqrt{\log n} \|\Phi\|_{L^p(\Omega;L^2(0,T;\gamma(H,\ell^\infty_n(X))))} & \text{if $n\geq 3$,} \label{eq:stochasticellinftyn-1}
\\ I^{\Phi}_n & \leq K_{p,D} \log n \|\Phi\|_{L^p(\Omega;L^2(0,T;\ell^\infty_n(\gamma(H,X))))} & \text{if $n\geq 8$.}\label{eq:stochasticellinftyn-2}
\end{align}
If $2\le p<\infty$, these estimates holds with $C_{p,D} = 10D\sqrt{2e p}$ and $K_{p,D} = 10D e \sqrt{p}$.
\end{proposition}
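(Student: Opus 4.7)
The plan is to view $\Phi := (\Phi^{(k)})_{k=1}^n$ as a single process with values in $\gamma(H,\ell^q_n(X))$ for some $q\ge 2$ to be optimized, so that the vector $\bigl(\int_0^t \Phi^{(k)}_s\ud W_s\bigr)_{k=1}^n$ is a genuine $\ell^q_n(X)$-valued stochastic integral. Since $\ell^q_n\hookrightarrow\ell^\infty_n$ has norm one, $I^{\Phi}_n$ is bounded by the $\ell^q_n(X)$-analogue of the same quantity. Proposition~\ref{prop:LpX-2mooth} tells us that $\ell^q_n(X)$ is $(2,\sqrt{q-2+D^2})$-smooth, hence $(2,D\sqrt{q-1})$-smooth once $D\ge 1$ and $q\ge 2$. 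Applying Proposition~\ref{prop:Seid} at exponent $p\ge 2$ therefore yields
\[ I^{\Phi}_n \le 10D\sqrt{p(q-1)}\,\|\Phi\|_{L^p(\Omega;L^2(0,T;\gamma(H,\ell^q_n(X))))}. \]
All that remains is to bound the right-hand norm in terms of either $\|\cdot\|_{\gamma(H,\ell^\infty_n(X))}$ or $\|\cdot\|_{\ell^\infty_n(\gamma(H,X))}$ and to optimize the choice of $q$.

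For \eqref{eq:stochasticellinftyn-1} I would invoke the ideal property of $\gamma$-radonifying operators applied to the inclusion $\ell^\infty_n(X)\hookrightarrow\ell^q_n(X)$, which has norm $n^{1/q}$; this yields
\[ \|\Phi\|_{\gamma(H,\ell^q_n(X))} \le n^{1/q}\|\Phi\|_{\gamma(H,\ell^\infty_n(X))}. \]
The coefficient to be optimized is then $\sqrt{q-1}\cdot n^{1/q}$. Setting $q:=2\log n$, which is $\ge 2$ precisely when $n\ge 3$, one computes $\sqrt{q-1}\cdot n^{1/q}\le \sqrt{q}\cdot \sqrt{e} = \sqrt{2e\log n}$, producing $C_{p,D}=10D\sqrt{2ep}$.

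For \eqref{eq:stochasticellinftyn-2} I first need to pass from $\gamma(H,\ell^q_n(X))$ to $\ell^q_n(\gamma(H,X))$. The key ingredient is the direct Fubini-type identity
\[ \|T\|_{\gamma_q(H,\ell^q_n(X))}^q = \sum_{k=1}^n \|T^{(k)}\|_{\gamma_q(H,X)}^q, \]
obtained by writing out $\E\|TG\|_{\ell^q_n(X)}^q$ for a standard Gaussian $G$ in $H$. Combined with Jensen ($\|\cdot\|_\gamma=\|\cdot\|_{\gamma_2}\le\|\cdot\|_{\gamma_q}$ for $q\ge 2$ on a probability space) and the Gaussian Kahane--Khintchine inequality $\|\cdot\|_{\gamma_q}\le \kappa_q\|\cdot\|_\gamma$ with $\kappa_q=O(\sqrt{q})$, this gives
\[ \|T\|_{\gamma(H,\ell^q_n(X))} \le \kappa_q\|T\|_{\ell^q_n(\gamma(H,X))} \le \kappa_q n^{1/q}\|T\|_{\ell^\infty_n(\gamma(H,X))}. \]
The coefficient is now $\sqrt{q-1}\cdot\kappa_q\cdot n^{1/q}=O(q\cdot n^{1/q})$, and the choice $q:=\log n$, which is $\ge 2$ precisely when $n\ge 8$, gives $q\cdot n^{1/q}=e\log n$ and hence $K_{p,D}=10De\sqrt{p}$.

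The main delicate point is tracking the Kahane--Khintchine constant $\kappa_q$ tightly enough (e.g.\ $\kappa_q\le\sqrt{q}$) to recover the stated $De\sqrt{p}$ coefficient; the first estimate is cleaner because it stays within the $\gamma(H,\cdot)$-scale and so avoids this step entirely. The remaining case $0<p<2$ can be handled by a standard Lenglart-type extrapolation, giving constants of the same qualitative form.
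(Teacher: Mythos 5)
Your proposal is correct and follows essentially the same route as the paper: view $\Phi$ as a $\gamma(H,\ell^q_n(X))$-valued process, apply Seidler's Burkholder inequality (Proposition~\ref{prop:Seid}) in the $2$-smooth space $\ell^q_n(X)$ using the smoothness constant from Proposition~\ref{prop:LpX-2mooth}, pass to $\ell^\infty_n$ at the cost of $n^{1/q}$ (via the $\gamma$-ideal property for \eqref{eq:stochasticellinftyn-1}, and via the Fubini identity $\gamma_q(H,\ell^q_n(X))=\ell^q_n(\gamma_q(H,X))$ plus Kahane--Khintchine for \eqref{eq:stochasticellinftyn-2}), and then optimize $q=2\log n$, respectively $q=\log n$. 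Your slightly sharper smoothness constant $\sqrt{q-1}$ is harmless and yields the same $C_{p,D}$ and $K_{p,D}$; one small simplification is that Proposition~\ref{prop:Seid} already covers $0<p<2$, so no separate Lenglart extrapolation is needed.
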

The bound \eqref{eq:stochasticellinftyn-2} is simpler to use, but \eqref{eq:stochasticellinftyn-1} will give a better result in the applications later on.
\begin{proof}
The method of proof is inspired by \cite{DGVW}.
The idea is to view the sequence $\Phi = (\Phi^{(k)})_{k=1}^n$ as an $\ell^q_n(X)$-valued process
for a clever choice of $q= q(n) \in [2,\infty)$.

We begin with the proof of \eqref{eq:stochasticellinftyn-1}.
Since $\ell^q_n(X)$ is $(2,D\sqrt{q})$-smooth by Proposition \ref{prop:LpX-2mooth}, by Proposition \ref{prop:Seid} we have
\begin{align*}
I^{\Phi}_n \leq
\Big(\E \sup_{t\in [0,T]}\Big\|\int_0^t \Phi_s \ud W_s\Big\|^p_{\ell^q_n(X)} \Big)^{1/p}
 & \leq C_{p,q,D}  \|\Phi\|_{L^p(\Omega;L^2(0,T;\gamma(H,\ell^q_n(X))))}
\\ & \leq C_{p,q,D} n^{1/q} \|\Phi\|_{L^p(\Omega;L^2(0,T;\gamma(H,\ell^\infty_n(X))))},
\end{align*}
and if $2\le p<\infty$ we may take $C_{p,q,D} = 10D\sqrt{pq}$.
The estimate \eqref{eq:stochasticellinftyn-1} follows from this by taking $q=2\log n$, which belongs to the interval $[2, \infty)$ if $n\ge 3$.

To prove \eqref{eq:stochasticellinftyn-2} we argue in the same way, but this time we use that for a sequence $\Gamma:=(\Gamma_k)_{k=1}^n$ with $\Gamma_k\in \gamma(H,X)$,
\begin{align*}
\|\Gamma\|_{\gamma(H,\ell^q_n(X))}& \leq \|\Gamma\|_{\gamma_q(H,\ell^q_n(X))}
\\ & = \|\Gamma\|_{\ell^q_n(\gamma_q(H,X))}\leq n^{1/q} \|\Gamma\|_{\ell^\infty_n(\gamma_q(H,X))}\leq n^{1/q} \sqrt{q} \|\Gamma\|_{\ell^\infty_n(\gamma(H,X))},
\end{align*}
applying the Kahane--Khintchine inequalities (see \cite[Theorem 6.2.6]{HNVW17}) in the last step. Now \eqref{eq:stochasticellinftyn-2} follows by taking $q= \log n$.
\end{proof}

\begin{remark} The same method of proof can be used to show that if $X$ is $(2,D)$-smooth, then $\ell^\infty_n(X)$ has martingale type $2$ with constant $\sqrt{D^2 -2 + 2\log n}$ if $n\geq 3$.
\end{remark}

\section{Extending Pinelis's Burkholder--Rosenthal inequality}\label{sec:PBR}

On the probability space  $(\Om,\F,\P)$ we consider a finite filtration $(\F_j)_{j= 0}^k$ and denote by $\E_j:= \E_{\F_j}$ the
conditional expectation with respect to $\F_{j}$.
When $ (f_j)_{j= 0}^k$ is an $X$-valued martingale with respect to $(\F_j)_{j= 0}^k$, we denote by
$(df_j)_{j=1}^k$ its difference sequence, i.e.,
$df_j := f_j - f_{j-1}$.
We further define the non-negative random variables $f_j\ss$ (for $0\le j\le k$) and $df_j\ss$ and $s_j(f)$ (for $1\le j\le k)$ by
\begin{align*}
 f_j\ss  := \max_{0\le i\le j} \n f_i\n , \quad df_j\ss :=  \max_{1\le i\le j} \n df_i\n,  \quad  s_j(f):= \Bigl(\sum_{i=1}^j \E_{i-1} \n df_{i}\n^2\Bigr)^{1/2},
\end{align*}
and we set $f\ss := f\ss_k$, $df\ss := df\ss_k$, and $s(f) := s_k(f)$.

If $\G$ is a sub-$\sigma$-algebra of $\F$, we call the $X$-valued random variables $\xi$ and $\eta$
{\em conditionally equi-distributed given $\G$} if for all Borel sets $B\subseteq X$ we have
$$ \E_\G \one_{\{\xi \in B\}} = \E_\G\one_{\{\eta \in B\}}.$$
As in \cite[Lemma 4.4.5]{HNVW16} one sees that this is equivalent to the requirement that
\begin{align}\label{eq:cond-equi}
\E(f(\xi)|\G) = \E(f(\eta)|\G)
\end{align}
for all measurable functions $f:X \to X$ such that $f(\xi), f(\eta)\in L^1(\Omega;X)$.

An adapted $X$-valued sequence $(\xi_j)_{j=1}^k$ is called {\em conditionally symmetric given $(\F_j)_{j=0}^k$}
if for all Borel sets $B\subseteq X$ and $1\le j\le k$ the random variables
 $\xi_j$ and $-\xi_j$ are conditionally equi-distributed given $\F_{j-1}$.
 Taking $f(x) = \one_{\{\|x\|\leq r\}}x$ in \eqref{eq:cond-equi}, it follows that for conditionally symmetric sequences we have
$ \E_{j-1} (\one_{\{\n\xi_j\n \leq  r\}}\xi_j) = - \E_{j-1} (\one_{\{\n\xi_j\n \leq  r\}}\xi_j) $, i.e.,
\begin{align}\label{eq:cond-symm} \E_{j-1} (\one_{\{\n\xi_j\n \leq r\}}\xi_j) = 0.
\end{align}

A {\em random operator} on $X$ is a mapping $V:\Om\to \calL(X)$ such that $\om\mapsto V(\om)x$ is strongly measurable for all $x\in X$, and a {\em random contraction} on $X$ is a {\em random operator} on $X$ whose range consists of contractions.

The main result of this section is the following extension of Pinelis's version of the Rosenthal--Burkholder inequality \cite{Pin}. Recently, other extensions of some of Pinelis's estimates for $p$-smooth Banach spaces have been obtained in \cite{Luo}.

\begin{theorem}\label{thm:Pinelis}
Let $X$ be a $(2,D)$-smooth Banach space. Suppose that $ (f_j)_{j= 0}^k$ is an adapted sequence of $X$-valued random variables, $ (g_j)_{j=0}^k$ is an $X$-valued martingale,
$(V_j)_{j=1}^k$ is a sequence of random contractions on $X$ which is strongly predictable (i.e., each $V_jx$ is strongly $\F_{j-1}$ measurable for all $x\in X$), and assume that we have $f_0=g_0=0$ and
\[f_j = V_{j} f_{j-1} + dg_j, \qquad j=1, \ldots, k.\]
Then for all $2\le p<\infty$ we have
\[ \n f^\star\n_p \le 30p \n dg\ss\n_p + 40D\sqrt{p} \n s(g)\n_p.\]
If, moreover, $(g_j)_{j=0}^k$ has conditionally symmetric increments, then
\[ \n f^\star\n_p \le 5p \n dg\ss\n_p + 10D\sqrt{p} \n s(g)\n_p.\]
\end{theorem}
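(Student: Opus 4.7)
The plan is to adapt Pinelis's proof of his Burkholder--Rosenthal inequality, with the new input being that the predictable contractions $V_j$ can be absorbed at no cost. Concretely, since $V_j f_{j-1}$ is strongly $\F_{j-1}$-measurable and $\E_{j-1} dg_j = 0$, Lemma \ref{lem:folklorePinelis} applied to $\xi = \lambda V_j f_{j-1}$ and $\eta = \lambda dg_j$ yields, for every $\lambda>0$,
\[
\E_{j-1}\cosh(\lambda \n f_j\n) \leq \bigl(1 + D^2 \E_{j-1}(e^{\lambda \n dg_j\n} - 1 - \lambda \n dg_j\n)\bigr)\cosh(\lambda \n V_j f_{j-1}\n).
\]
Since $\n V_j f_{j-1}\n \leq \n f_{j-1}\n$ and $\cosh$ is nondecreasing on $[0,\infty)$, the last factor may be replaced by $\cosh(\lambda\n f_{j-1}\n)$, after which all dependence on $(V_j)$ has vanished. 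Setting
\[
Z_j := \cosh(\lambda \n f_j\n) \prod_{i=1}^j \bigl(1 + D^2 \E_{i-1}(e^{\lambda \n dg_i\n} - 1 - \lambda \n dg_i\n)\bigr)^{-1}
\]
therefore produces a nonnegative $(\F_j)$-supermartingale, which is precisely the starting point of the arguments of Pinelis and Seidler.

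For the conditionally symmetric case I would follow Seidler closely. Using that $\psi(x):=(e^x-1-x)/x^2$ is nondecreasing on $[0,\infty)$, on the event $\{dg\ss\leq c\}$ the running product in $Z_j$ is majorised by $\exp(D^2 \psi(\lambda c)\, s(g)^2)$. Ville's inequality for the supermartingale $Z$, combined with $\cosh(x)\geq \tfrac12 e^x$, yields the Bennett-type tail bound
\[
\P\bigl(f\ss \geq R,\ dg\ss \leq c,\ s(g) \leq b\bigr) \leq 2\exp\bigl(-\lambda R + D^2 b^2 \psi(\lambda c)\bigr)
\]
for all $\lambda,R,b,c>0$. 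Optimising in $\lambda$ in the Bernstein manner, and integrating $p\int_0^\infty R^{p-1}\P(f\ss>R)\ud R$ after stratifying by the levels $\{dg\ss\leq c\}$ and $\{s(g)\leq b\}$ chosen proportional to $R/p$ and $R/\sqrt{p}$ respectively, gives the bound with constants $5p$ and $10D\sqrt{p}$. Extracting exactly these numerical constants from the Bennett optimisation is the combinatorially heaviest part of the argument.

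For the general (not necessarily conditionally symmetric) case, I would split each increment into a bounded centred part and a large-jump correction,
\[
dy_j := dg_j\one_{\{\n dg_j\n \leq r\}} - \E_{j-1}\bigl(dg_j\one_{\{\n dg_j\n \leq r\}}\bigr), \qquad dz_j := dg_j - dy_j,
\]
both of which are $(\F_j)$-martingale differences. Because each $V_j$ is \emph{linear}, the recursion splits as $f = f^y + f^z$ with $f^y_j = V_j f^y_{j-1}+ dy_j$ and $f^z_j = V_j f^z_{j-1}+ dz_j$; the symmetric argument above applies verbatim to $f^y$ (whose increments are bounded by $2r$ and satisfy $s(y)\leq s(g)$), while contractivity of the $V_j$ gives the pathwise telescoped estimate $\n f^z_j\n \leq \sum_{i=1}^j \n dz_i\n$, which a routine calculation controls by $\n dg\ss\n_p$ and $\n s(g)\n_p$. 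Balancing $r$ then yields the stated bound with the constants $30p$ and $40D\sqrt{p}$. The chief obstacle throughout is not conceptual but arithmetic: threading the truncation levels and the Bennett optimisation so that the final coefficients match those in the statement.
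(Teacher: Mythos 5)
Your opening observation is exactly the paper's: since $\|V_jf_{j-1}\|\le\|f_{j-1}\|$ and $\cosh$ is nondecreasing, Lemma \ref{lem:folklorePinelis} yields the $\cosh$-supermartingale with the predictable contractions absorbed at no cost, and this is the only point where the $V_j$'s enter. From there, however, the routes diverge, and your sketch has a genuine gap.

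The gap is in the passage from the Bennett-type tail bound to the moment estimate. You propose to integrate $p\int_0^\infty R^{p-1}\P(f\ss>R)\ud R$ after splitting off the events $\{dg\ss>c\}$ and $\{s(g)>b\}$ with $c\sim R/p$, $b\sim R/\sqrt{p}$. But plug these choices into your own tail estimate
$\P\bigl(f\ss\ge R,\ dg\ss\le c,\ s(g)\le b\bigr)\le 2\bigl(eb^2/(Rc)\bigr)^{R/c}$:
with $c=R/(\alpha p)$, $b=R/(\beta\sqrt p)$ one gets $eb^2/(Rc)=e\alpha/\beta^2$ and $R/c=\alpha p$, so the right-hand side is a \emph{constant in $R$}. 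The integral $p\int_{R_0}^\infty R^{p-1}\cdot\text{const}\ud R$ then diverges. Fixing $b,c$ instead of scaling them with $R$ would make the tail summable but produces an estimate in $\|dg\ss\|_\infty$ and $\|s(g)\|_\infty$ rather than the $L^p$-norms, so the Rosenthal form is lost.

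What is missing is precisely Burkholder's good-$\lambda$ mechanism, which the paper uses (Lemma \ref{lem:burk}): one must produce a bound of the form $\P(f\ss>\beta\la,\,w\le\la)\le\eps\,\P(f\ss>\la)$, with the \emph{self-referential} factor $\P(f\ss>\la)$ on the right. It is this factor that, after substitution $\la\mapsto\la/\beta$ and solving for $\E(f\ss)^p$, yields a finite estimate. Producing the factor $\P(f\ss>\la)$ is the nontrivial part, and it is done in the paper's Lemma \ref{lem:checkburk} by introducing the stopping times $\mu,\nu,\tau$ and the \emph{restarted} sequence $h_j$ that vanishes up to the first time $\|f_j\|$ exceeds $\la$; the tail lemma is then applied to this restarted sequence conditionally on $\{\mu=n\}$, so the probability of the restart event supplies exactly the missing $\P(f\ss>\la)$. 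Without this step the Bennett optimisation alone cannot close.

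Two smaller remarks. In your general case, note that the truncated-centred increments $dy_j$ are not conditionally symmetric, so they must be fed through the martingale version of the argument (the supermartingale lemma only needs $\E_{j-1}dy_j=0$, which you do have), not literally through the symmetric case. And your splitting $dg=dy+dz$ with the telescoped bound $\|f^z_j\|\le\sum_i\|dz_i\|$ is a Davis-style decomposition, genuinely different from the paper's use of decoupled tangent sequences (Gin\'e--Pe\~na) to reduce to the symmetric case. It is a reasonable alternative route to the general statement, but both branches of your argument rely on the flawed integration step, so the decomposition does not by itself rescue the proof.
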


Here and in the rest of the paper, $\n \cdot\n_p$ is the norm of $L^p(\Om)$.
The proof of Theorem \ref{thm:Pinelis} closely follows that of \cite[Theorem 4.1]{Pin}
(which, up to the value of the constants, corresponds to taking $V_j = I$ and $g_j = f_j$).
We point out that even in the case $p=2$, Theorem \ref{thm:Pinelis} is not obvious because the additional predictable sequence $(V_j)_{j=1}^{k}$ destroys the martingale structure of $f$.

The proof in \cite{Pin} is written up rather concisely and therefore we shall present the proof of Theorem \ref{thm:Pinelis} in full detail. At the same time this provides the opportunity to give more precise information on the constants.

We need some auxiliary results, the first of which is a classical `good $\la$' inequality (see \cite[Lemma 7.1]{Burk}).

\begin{lemma}\label{lem:burk}
 Suppose that $g$ and $h$ are non-negative random variables and suppose that $\beta > 1$, $\delta> 0$, and $\eps > 0$ are such that for all $\la>0$ we have
\begin{align*} \P(g > \beta\la,\, h < \delta\la) < \eps \P(g > \la).
\end{align*}
If $1\le p<\infty$ and $\beta^p\eps <1$, then
\begin{align*} \E g^p \le \frac{(\beta/\delta)^p}{1-\beta^p \eps} \E h^p.
\end{align*}
\end{lemma}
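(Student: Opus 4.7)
The plan is to use the classical layer-cake / good-$\lambda$ technique due to Burkholder, combined with a truncation step to handle possible non-integrability of $g^p$.

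First I would reduce to the case $\E g^p < \infty$ by replacing $g$ with the truncation $g\wedge N$. The hypothesis transfers, since for $\lambda < N$ one has $\{g\wedge N > \lambda\} = \{g > \lambda\}$, while $\{g\wedge N > \beta\lambda\} = \varnothing$ as soon as $\beta\lambda \geq N$. Hence
\[
\P(g\wedge N > \beta\lambda,\, h < \delta\lambda) \leq \P(g > \beta\lambda,\, h < \delta\lambda) < \eps\,\P(g > \lambda) = \eps\,\P(g\wedge N > \lambda),
\]
(the last identity for $\lambda < N$; for $\lambda \geq N$ the left side is $0$). Once the inequality is proved for each $g\wedge N$ with $\E(g\wedge N)^p \leq N^p < \infty$, monotone convergence delivers the bound for $g$.

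Assuming $\E g^p < \infty$, the core of the argument is the layer-cake identity followed by the substitution $\lambda \mapsto \beta\lambda$:
\[
\E g^p = p\int_0^\infty \lambda^{p-1}\P(g > \lambda)\ud\lambda = p\beta^p\int_0^\infty \lambda^{p-1}\P(g > \beta\lambda)\ud\lambda.
\]
Then I would split $\P(g > \beta\lambda) \leq \P(h \geq \delta\lambda) + \P(g > \beta\lambda,\, h < \delta\lambda)$ and use the hypothesis on the second summand. The first integrates to $\delta^{-p}\E h^p$ by the substitution $\mu = \delta\lambda$; the second contributes at most $\beta^p\eps\,\E g^p$. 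This gives
\[
\E g^p \leq (\beta/\delta)^p\,\E h^p + \beta^p\eps\,\E g^p,
\]
and the assumption $\beta^p\eps < 1$ lets me absorb the $\E g^p$ term on the left, yielding the claimed bound.

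The only delicate point is the truncation argument; the manipulation of the layer-cake formula itself is routine. Nothing in the proof depends on $p \geq 1$ beyond integrability of $\lambda^{p-1}$ near zero, which is immediate.
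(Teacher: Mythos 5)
Your proof is correct. The paper does not reproduce a proof of this lemma but simply cites Burkholder's Lemma 7.1, and your layer-cake plus absorption argument, preceded by truncation to guarantee $\E g^p<\infty$ before rearranging, is exactly the standard proof from that reference. One cosmetic remark: the strict inequality in the hypothesis does not literally transfer to $g\wedge N$ once $\lambda$ is large enough that both sides vanish, but your integration step only ever uses the non-strict bound $\P(g\wedge N>\beta\la,\,h<\delta\la)\le\eps\,\P(g\wedge N>\la)$, which does hold for all $\la>0$, so the argument is sound.
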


The next lemma is a minor extension of \cite[Theorem 3.4]{Pin}.

\begin{lemma}\label{lem:tail}
Suppose that $(g_j)_{j=0}^k$ is a martingale with values in a $(2,D)$-smooth Banach space $X$ with $g_0=0$ and let $(h_j)_{j=0}^{k-1}$ be an adapted sequence of random variables with values in $X$. Set $$f_0:=0, \qquad f_j := h_{j-1} + dg_j, \quad 1\le j\le k,$$
and assume that $\|h_{j}\|\leq \|f_{j}\|$ almost surely for all $0\le j\le k-1$. Suppose further that
$\n dg\ss\n_\infty \le a$ and $\n s(g)\n_\infty\le b/D$ for some $a>0$ and $b>0$. Then for all $r> 0$ we have
$$ \P(f\ss \ge r) \le 2\Bigl(\frac{eb^2}{ra}\Bigr)^{r/a}.$$
\end{lemma}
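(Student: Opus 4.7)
The plan is to run a Cram\'er--Chernoff argument based on an exponential supermartingale built from $\cosh$, modelled on Pinelis's proof but accommodating the auxiliary sequence $(h_j)$. For a parameter $\la>0$ to be optimised later, set
\[\alpha_i := \E_{i-1}(e^{\la\n dg_i\n}-1-\la\n dg_i\n), \qquad A_j:= D^2\sum_{i=1}^j \alpha_i, \qquad M_j := e^{-A_j}\cosh(\la\n f_j\n).\]
Since each $\alpha_i$ is $\F_{i-1}$-measurable, $A_j$ is $\F_{j-1}$-measurable, the process $(M_j)$ is adapted, and $M_0=1$.

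The heart of the argument is to show that $(M_j)$ is a positive supermartingale. I would apply the second assertion of Lemma \ref{lem:folklorePinelis} conditionally on $\F_{j-1}$ with $\xi = \la h_{j-1}$ and $\eta = \la dg_j$ (the required $L^\infty$ integrability follows from $\n dg_j\n\le a$ together with the recursion $f_j = h_{j-1}+dg_j$, which iteratively bounds $\n h_{j-1}\n\le\n f_{j-1}\n\le (j-1)a$), obtaining
\[\E_{j-1}\cosh(\la\n f_j\n)\le \bigl(1+D^2\alpha_j\bigr)\cosh(\la\n h_{j-1}\n).\]
The hypothesis $\n h_{j-1}\n\le\n f_{j-1}\n$, monotonicity of $\cosh$ on $[0,\infty)$, and the elementary bound $1+x\le e^x$ then yield $\E_{j-1}M_j\le M_{j-1}$. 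It is precisely the hypothesis $\n h_{j-1}\n\le\n f_{j-1}\n$ that allows the discarding of $h$ in favour of $f$ inside $\cosh$ and thereby closes the recursion; in the intended application with $h_{j-1}=V_j f_{j-1}$ and $V_j$ a random contraction, this is automatic.

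Next, since $x\mapsto (e^x-1-x)/x^2$ is increasing on $(0,\infty)$ and $\n dg_i\n\le a$, we have $e^{\la\n dg_i\n}-1-\la\n dg_i\n \le \n dg_i\n^2 (e^{\la a}-1-\la a)/a^2$, whence summing the conditional expectations and using $\n s(g)\n_\infty\le b/D$ yields $A_k\le \beta(\la):=\frac{b^2}{a^2}(e^{\la a}-1-\la a)$ almost surely. Doob's weak-type inequality for positive supermartingales gives $\P(\sup_j M_j\ge c)\le 1/c$ for $c>0$; combined with $\cosh(\la\n f_j\n)=e^{A_j}M_j\le e^{\beta(\la)}M_j$ and $\cosh(\la r)\ge \tfrac12 e^{\la r}$, this forces $\sup_j M_j\ge \tfrac12 e^{\la r-\beta(\la)}$ on $\{f\ss\ge r\}$, and hence
\[\P(f\ss\ge r)\le 2e^{\beta(\la)-\la r}.\]
Choosing the minimiser $\la=a^{-1}\log(1+ra/b^2)$ gives $\beta(\la)-\la r = r/a - (r/a+b^2/a^2)\log(1+ra/b^2)$, and the stated bound $2(eb^2/(ra))^{r/a}$ then reduces to the elementary inequality $(1+t)\log(1+t)\ge t\log t$ with $t=ra/b^2>0$ (trivial for $t\le 1$ since the right-hand side is nonpositive, and immediate for $t\ge 1$ by comparing derivatives of the two sides at $t=1$). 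The only genuine obstacle is the supermartingale construction itself; once Lemma \ref{lem:folklorePinelis} has been pushed through the contraction step, the rest is a standard exponential-moment optimisation.
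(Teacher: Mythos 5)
Your proof is correct and takes essentially the same approach as the paper: both build a $\cosh$-based positive supermartingale from the second assertion of Lemma \ref{lem:folklorePinelis}, use the hypothesis $\|h_{j-1}\|\le\|f_{j-1}\|$ together with monotonicity of $\cosh$ to close the recursion, bound the compensator via $\|s(g)\|_\infty\le b/D$ and the monotonicity of $u\mapsto(e^u-1-u)/u^2$, apply a maximal inequality for nonnegative supermartingales, and optimise in $\la$. Your two deviations are purely cosmetic: you normalise by $e^{-A_j}$ rather than by $\prod(1+e_i)$ (absorbing the paper's final $1+x\le e^x$ step into the definition), and you invoke Doob's weak-type maximal inequality directly in place of the paper's optional-sampling argument with the stopping time $\tau$ — these are interchangeable.
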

\begin{proof}
We begin by noting that the almost sure conditions $f_0 = 0$, $\n h_{j-1}\n \le \n f_{j-1}\n$, $f_j := dg_j + h_{j-1}$,
and $dg_j\le a$ imply that the random variables $h_{j-1}$ and $f_j$, $j=1,\dots,k$, are essentially bounded and $h_0=0$ almost surely.

Fix $\lambda>0$ and $1\le j\le k$. By Lemma \ref{lem:folklorePinelis},
\begin{align*}
\E_{j-1}\cosh(\la \n f_j\n) &=  \E_{j-1}\cosh(\la \n h_{j-1} + dg_j\n)
\\ & \leq \Bigl(1+ D^2 \E_{j-1} (e^{\la \n dg_j\n} -1 -\la \n dg_j\n) \Bigr)\cosh(\la \n h_{j-1}\n)
\\ & \leq \Bigl(1+ D^2 \E_{j-1} (e^{\la \n dg_j\n} -1 -\la \n dg_j\n) \Bigr)\cosh(\la \n f_{j-1}\n)
\\ & =:(1+e_j)\cosh(\la \n f_{j-1}\n).
\end{align*}
Note that the random variables $e_j$ are non-negative.
This means that the sequence $(G_j)_{j=0}^k$ defined by
$$G_0=1, \quad G_j:= \Bigl(\prod_{i=1}^j(1+e_i)\Bigr)^{-1}\cosh(\la \n f_{j}\n), \quad j=1,\dots,k,
$$
is a positive supermartingale.
Fix $r>0$ and set $\tau:= \min\{1\le j\le k: \, \n f_j\n\ge r\}$ on the set $\{f\ss \ge r\} = \{\max_{1\le j\le k} \n f_j\n \ge r\}$ and $\tau := \infty$ on its complement.
By the optional sampling theorem, the sequence $(G_{\tau\wedge j})_{j=0}^k$ is a positive supermartingale.
It follows that
$ \E \one_{\{\tau\le k\}}G_\tau \le \E G_{\tau\wedge k} \le \E G_0 = 1.$
Therefore, by the inequality $\cosh u > \frac12e^u$ and Chebyshev's inequality,
\begin{align*}
\P(f\ss \ge r) =  \P(\tau\le k)
& =\P\Bigl(\tau\le k,\, G_\tau \ge  \Bigl\n\prod_{j=1}^k (1+e_j)\Bigr\n_\infty^{-1}\cosh(\la r)\Bigr)
\\ & \le \P\Bigl(\tau\le k,\,G_\tau \ge  \frac12\Bigl\n\prod_{j=1}^k (1+e_j)\Bigr\n_\infty^{-1}e^{\la r}\Bigr)
\\ & \le 2 \exp(-\la r) \Big\n\prod_{j=1}^k (1+e_j)\Bigr\n_\infty \E\one_{\{\tau\le k\}}G_\tau
\\ & \le 2 \exp(-\la r) \Big\n\prod_{j=1}^k (1+e_j)\Bigr\n_\infty \le 2 \exp\Bigl(-\la r + \Bigl\n \sum_{j=1}^k e_j\Bigr\n_\infty\Bigr),
\end{align*}
the last inequality being elementary.

 The function defined by $\psi(0) := \frac12$ and $\psi(u) := (e^u -1-u)/u^2$ for $u\not=0$
 is increasing, and therefore for all $\la>0$ we have
\begin{align*} \E_{j-1} (e^{\la \n dg_j\n} -1 - \la \n dg_j\n) \le \frac{1}{a^2} (e^{\la a} -1 - \la a)\E_{j-1}\n dg_j\n^2 .
\end{align*}
Combining this with the definition of the random variables
$e_j$ and the assumption $\n s(g)\n_\infty\le b/D$, we obtain the pointwise inequalities
\begin{align*} \sum_{i=1}^k e_i
& = D^2\sum_{i=1}^k  \E_{j-1} (e^{\la \n dg_j\n} -1 -\la \n dg_j\n)
\\ & \le  \frac{D^2}{a^2} (e^{\la a} -1 - \la a)
\sum_{i=1}^k \E_{j-1}\n dg_j\n^2
\le \frac{b^2}{ a^2}(e^{\la a} -1 - \la a).
\end{align*}
Taking the supremum norm and substituting the result into above tail estimate for $f^\star$
we arrive at
\begin{align*}
 \P(f\ss \ge r)
 & \le 2 \exp\Bigl(-\la r + \frac{b^2}{ a^2}(e^{\la a} -1 - \la a)\Bigr).
\end{align*}
Up to this point the choice of $\la>0$ was arbitrary.
Optimising the choice of $\la>0$ leads to the estimate
\begin{align*}
 \P(f\ss \ge r)
 & \le 2 \exp\Bigl(\frac{r}{a} - \Bigl(\frac{r}{a}+ \frac{b^2}{a^2}\Bigr)\ln \Bigl(1+\frac{ra}{b^2}\Bigr)\Bigr)
\end{align*}
which, by elementary estimates, implies the inequality in the statement of the lemma.
\end{proof}

The next lemma gives a sufficient condition in order that Lemma \ref{lem:burk} can be applied
and extends \cite[Lemma 4.2]{Pin}. Terminology is as in Theorem \ref{thm:Pinelis}.

\begin{lemma}\label{lem:checkburk}
Let $X$ be a $(2,D)$-smooth Banach space $X$.
Suppose that $(g_j)_{j=0}^k$ is a martingale with values in $X$ with $g_0=0$ such that each $dg_j$ is $\F_{j-1}$-conditionally symmetric,
the sequence of random operators $(V_j)_{j=1}^k$ on $X$ is strongly predictable and contractive. Let $(f_j)_{j=0}^k$ be the sequence of random variables defined by
$$ f_0 := 0, \qquad f_j := V_{j} f_{j-1} +  dg_j, \quad j= 1, \ldots, k.$$
Then for all $\la,\delta_1, \delta_2>0$ and $\beta> 1+\delta_2$ we have
$$ \P( f\ss > \beta\la, \, w \le \la) \le \eps \P(f\ss>\la),$$
where
$$ w = (\delta_2^{-1}dg\ss) \vee (\delta_1^{-1}Ds(g)), \quad
\eps = 2\Bigl(\frac{e\delta_1^2}{N\delta_2^2}\Bigr)^N, \quad N = \frac{\beta-1-\delta_2}{\delta_2}.$$
\end{lemma}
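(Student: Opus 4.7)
The plan is to follow the classical good-$\lambda$ template. Introduce the stopping time $\tau := \min\{j \geq 1 : \n f_j\n > \lambda\}$ (with $\tau = \infty$ if no such $j$ exists), so that $\{f\ss > \lambda\} = \{\tau \leq k\}$, and show that on $\{\tau \leq k,\, w \leq \lambda\}$ the conditional probability of $\{f\ss > \beta\lambda\}$ given $\F_\tau$ is at most $\eps$; the tower property then delivers the inequality.

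The first step is to translate the post-$\tau$ fragment of the process so that it starts from zero. With $W_0 := I$ and $W_j := V_{\tau+j}V_{\tau+j-1}\cdots V_{\tau+1}$, define
\[\tilde f_j := f_{\tau+j} - W_j f_\tau, \qquad d\tilde g_j := dg_{\tau+j}, \qquad j = 0,\ldots,k-\tau.\]
A direct calculation gives $\tilde f_0 = 0$ and $\tilde f_j = V_{\tau+j}\tilde f_{j-1} + d\tilde g_j$, and $(d\tilde g_j)$ inherits the conditional symmetry of $(dg_j)$. Decomposing $w = w_\tau \vee w_{>\tau}$ with $w_\tau$ the $\F_\tau$-measurable part built from $dg_1,\ldots,dg_\tau$, the contractivity of $V_\tau$ yields $\n f_\tau\n \le \n V_\tau f_{\tau-1}\n + \n dg_\tau\n \leq (1+\delta_2)\lambda$ on $\{w_\tau \leq \lambda\}$. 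Combined with $\n W_j f_\tau\n \leq \n f_\tau\n$, this forces $\tilde f\ss > \beta\lambda - (1+\delta_2)\lambda = N\delta_2\lambda$ on $\{f\ss > \beta\lambda\} \cap \{w_\tau \leq \lambda\} \cap \{\tau \leq k\}$.

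The main obstacle is that Lemma \ref{lem:tail} requires $\n dg\ss\n_\infty \leq a$ and $\n s(g)\n_\infty \leq b/D$ almost surely, whereas $w \leq \lambda$ only holds on an event. To bridge this I would exploit the conditional symmetry of $(dg_j)$: the truncated sequence $d\hat{\tilde g}_j := \one_{\{\n d\tilde g_j\n \leq \delta_2\lambda\}}d\tilde g_j$ is again a conditionally symmetric martingale difference sequence (the truncation set being symmetric) and automatically satisfies $\n d\hat{\tilde g}\ss\n_\infty \leq \delta_2\lambda$. Stopping at the $(\F_{\tau+j})$-stopping time $\tilde\sigma := \min\{j\geq 1 : Ds_{j+1}(\hat{\tilde g}) > \delta_1\lambda\}$ further forces $\n s(\hat{\tilde g}^{\tilde\sigma})\n_\infty \leq \delta_1\lambda/D$ almost surely. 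The associated process $\hat f_0 := 0$, $\hat f_j := V_{\tau+j}\hat f_{j-1} + d\hat{\tilde g}^{\tilde\sigma}_j$ then satisfies the hypotheses of Lemma \ref{lem:tail} with $h_{j-1} := V_{\tau+j}\hat f_{j-1}$ (contractivity gives $\n h_{j-1}\n \leq \n \hat f_{j-1}\n$), and crucially $\hat f = \tilde f$ on $\{w_{>\tau}\leq\lambda\}$ since neither the truncation nor $\tilde\sigma$ can fire there.

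Applying Lemma \ref{lem:tail} conditionally on $\F_\tau$ with $a = \delta_2\lambda$, $b = \delta_1\lambda$, and $r = N\delta_2\lambda$ gives
\[\P(\hat f\ss \geq N\delta_2\lambda \mid \F_\tau) \leq 2(e\delta_1^2/(N\delta_2^2))^N = \eps \quad \text{on } \{\tau\leq k\}.\]
Combining this with the inclusion $\{f\ss > \beta\lambda,\, w\leq\lambda\} \subseteq \{\tau\leq k,\,w_\tau\leq\lambda\} \cap \{\hat f\ss \geq N\delta_2\lambda\}$ from the second paragraph and the $\F_\tau$-measurability of $\{w_\tau \leq \lambda\}$, the tower property yields
\[\P(f\ss > \beta\lambda,\, w\leq\lambda) \leq \E\bigl[\one_{\{\tau\leq k,\,w_\tau\leq\lambda\}}\eps\bigr] \leq \eps\,\P(\tau\leq k) = \eps\,\P(f\ss > \lambda),\]
as required.
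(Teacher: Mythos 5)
Your plan follows the same route as the paper's proof: stop at the first crossing of $\lambda$, translate so the post-crossing fragment starts from zero, truncate the increments at $\delta_2\lambda$ (using conditional symmetry to preserve the martingale difference property) and stop the predictable quadratic variation at $\delta_1\lambda/D$, feed the resulting process into Lemma \ref{lem:tail}, and close with the good-$\lambda$ estimate. The decomposition and the key lemma are the same; the correct inclusion $\{f\ss>\beta\la,\,w\le\la\}\subseteq\{\tau\le k,\,w_\tau\le\la\}\cap\{\hat f\ss\ge N\delta_2\la\}$ and the final computation are also essentially as in the paper.

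The one place that is currently a handwave is ``applying Lemma \ref{lem:tail} conditionally on $\F_\tau$.'' Lemma \ref{lem:tail} is stated and proved as an unconditional tail estimate, and you are invoking it after a \emph{random} shift by a stopping time $\tau$, with respect to the shifted filtration $(\F_{\tau+j})_{j\ge 0}$. This requires checking that the truncated, stopped increments $(d\hat{\tilde g}^{\tilde\sigma}_j)$ still form a martingale difference sequence in this setting and that the supermartingale/optional-sampling argument underlying Lemma \ref{lem:tail} survives conditioning. The paper handles exactly this by partitioning on $\{\mu=n\}$ for each fixed $n$, restricting to the probability space $\Omega_n=\{\mu=n\}$ with the fixed filtration $(\F_j|_{\Omega_n})$, and verifying in detail (the ``claim,'' parts (a) and (b)) both the martingale-difference property and the hypotheses of Lemma \ref{lem:tail} there; this avoids a random shift altogether. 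To close your argument rigorously you should either run the same partition argument with $\tau$ in place of $\mu$, or prove a conditional version of Lemma \ref{lem:tail}. A minor further remark: the paper defines its first-crossing time $\mu$ via the truncated process $\ov f$, whereas your $\tau$ uses $f$ directly. Since $f=\ov f$ on $\{w\le\la\}$ and that is the only place you use $\tau$, this is harmless, but it does mean the intermediate objects are defined differently off that event.
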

\begin{proof} Fix $\la,\delta_1, \delta_2>0$ and $\beta> 1+\delta_2$.
Setting $\ov g_0:= 0$ and
\[\ov g_j  := \sum_{i=1}^j \one_{\{\n dg_i\n \le \delta_2 \la\}} dg_i,\quad j=1,\dots,k,\]
by \eqref{eq:cond-symm} we have $\E_{j-1}d\ov g_j = 0$. Set $\overline{f_0}:= f_0=0$, $h_0:=0$, and
$$\overline{f}_j := V_{j}\overline{f}_{j-1} + d\overline{g}_j , \quad h_j :=  V_{j} h_{j-1}+\one_{\{\mu<j\le \tau\wedge \nu\}}d\overline{g}_j, \quad j=1,\dots,k, $$
where the stopping times $\mu$, $\nu$, and $\tau$ are defined by
 \begin{align*}
 \mu  & := \inf\{0\le j\le k:\, \n \ov f_j\n >\la\}, \\
 \nu  & := \inf\{0\le j\le k:\, \n \ov f_j\n >\beta\la\}, \\
 \tau & := \inf\{0\le j\le k-1:\, s_{j+1}(\ov g) > \delta_1 D^{-1} \la\};
 \end{align*}
we set $\mu := \infty$,  $\nu:=\infty$, and $\tau:=\infty$ if the respective sets over which the infima are taken are empty.
Note that the sequence $(h_j)_{j=0}^k$ is adapted. Notice that $h_j = 0$ on the set $\{j\le \mu\}$; in particular $h_\mu = 0$.

On the set $\{w \le \la\}$ we have $dg\ss \le \delta_2\la$ and in particular $\n dg_i\n \le \delta_2\la$ and therefore $dg_i =d\ov g_i$ for all $i=0,\dots,k$, so $\ov f_j = f_j$ for all $j=0\dots k$. It follows that
$$ \P( f\ss > \beta\la, \, w \le \la) = \P(\ov f\ss > \beta\la, \, w \le \la).$$
It also follows that $s(\ov g) \le \delta_1D^{-1}\la$, so $\tau = \infty$.

On the set $\{\ov f\ss > \beta\la\}$ we have $\mu\le \nu\le k$,
$\n \ov f_{\mu-1}\n \le \la$, and $\n \ov f_{\nu}\n > \beta\la$.
Consequently, for any contraction $S$ on $X$, on the set  $ \{\ov f\ss > \beta\la, \,w \le \la\}$ we have
\begin{align*}
\| \ov f_{\nu} - S \ov f_{\mu}\|\geq \|\ov f_{\nu}\| - \|\ov f_{\mu}\|\geq \|\ov f_{\nu}\| - \|\ov f_{\mu-1}\| -\|d\ov{g}_{\mu}\| > \beta\la - \la - \delta_2\lambda.
\end{align*}
On this set we also have
\begin{equation} \label{eq:identityhnumuf}
\begin{aligned}
h_{\nu} &= 0   \ \ \hbox{if $\mu=\nu$}, \\
h_{\nu} &= \ov f_{\nu} - V_{\nu,\mu}\ov f_{\mu} \ \ \hbox{if $\mu<\nu$, where} \ \ V_{\nu,\mu}= V_{\nu}\circ\ldots\circ V_{\mu+1}.
\end{aligned}
\end{equation}
The first identity in \eqref{eq:identityhnumuf} follows from
$ h_\mu  = V_{\mu}h_{\mu-1}= \dots = V_{\mu}\circ\dots\circ V_1 h_0 = 0$, recalling that $h_0=0$.
The second identity follows from $h_\mu=0$ and induction pointwise on $\Om$, noting that if $\mu\leq n < n+1\le \nu$,
then
\begin{align*}
h_{n+1} = V_{n+1} h_{n}+d\overline{g}_{n+1} = V_{n+1} (\ov f_{n} - V_{n,\mu}\ov f_{\mu})+d\overline{g}_{n+1} = \ov f_{n+1} - V_{n+1,\mu} \ov f_{\mu},
\end{align*}
where we used the definitions of $h$ and $f$, the linearity of $V_{n+1}$, and the induction hypothesis. Therefore, on the set $ \{\ov f\ss > \beta\la, \,w \le \la\}$, we obtain
\begin{align*}
h\ss  \ge \n h_{\nu}\n
& = \n \ov f_{\nu} - V_{\nu,\mu} \ov f_{\mu}\n > (\beta-1-\delta_2)\la.
\end{align*}
We have shown that
$$\P(\ov f\ss > \beta\la, \, w \le \la)  = \P(\ov f\ss > \beta\la, \, w \le \la) \le \P(h\ss > (\beta-1-\delta_2)\la).$$

Let $0\le n\le k$ be such that $\P(\Omega_n)>0$, with $\Omega_n := \{\mu=n\}$. We claim that (a) the random variables $\one_{\{\mu<j\le \tau\wedge \nu\}}d\overline{g}_j$ form a martingale difference sequence on the probability space $(\Omega_n, \F|_{\Omega_n}, \P_n)$, where $\F|_{\Omega_n} := \{F\cap \Omega_n:\, F\in \mathscr{F}\}$ and $\P_n:=\P/\P(\Omega_n)$, and (b) for this martingale difference sequence the conditions of Lemma \ref{lem:tail} are satisfied on the probability space $\Omega_n$, with $f_j$, $g_j$, and $h_j$ replaced by the restrictions  to $\Om_n$ of $h_j$,  $\gamma_j:= \one_{\{\mu<j\le \tau\wedge \nu\}}d\overline{g}_j$, and $ V_{j+1} h_j$ respectively, and with $a = \delta_2\la$, and $b = \delta_1\la$.

Indeed, fix $1\le j\le k$. If $j\le n$, then $j\le \mu$ on $\Om_n$ and therefore $\gamma_j = \one_{\{\mu<j\le \tau\wedge \nu\}}d\overline{g}_j =0$ on $\Om_n$.
If $j>n$, then $\{\mu<j\le \tau\wedge \nu\}\cap \Omega_n = \{j\le \tau\wedge \nu\}\cap \Omega_n$ is $\F_{j-1}$-measurable as a subset of $\Om$ and $\F_{j-1}|_{\Omega_n}$-measurable as a subset of $\Om_n$ and consequently for all $F\in \F_{j-1}|_{\Omega_n}\subseteq \F_{j-1}$ we obtain
\[\int_{F} \gamma_j \ud \P_n = \frac{1}{\P(\Omega_n)}\int_{F\cap \{j\le \tau\wedge \nu\}} d\ov g_j  \ud \P = 0\]
since $\E_{j-1}d\ov{g}_j=0$. This proves part (a) of the claim.

Turning to part (b) of the claim, the condition $d{\gamma}\ss \leq \delta_2\lambda$ of Lemma \ref{lem:tail} is immediate from the definition,  and the adaptedness of $V_{j+1} f_{j}$ as well as the pointwise inequalities $\|V_{j+1} f_{j}\| \leq \|f_{j}\|$ are also clear. The pointwise inequality $s({\gamma})\leq \delta_1D^{-1}\lambda$ on $\Om_n = \{\mu = n\}$ follows from
\begin{align*}
s(\gamma) & = \Big(\sum_{j=1}^{k} \E_{j-1}^{n}(\one_{\{\mu<j\leq \tau\wedge \nu\}} \|d\ov g_j\|^2)\Big)^{1/2}
\\ & = \Big(\sum_{j=n+1}^{k} \E_{j-1}^{n}(\one_{\{j\leq \tau\wedge \nu\}} \|d\ov g_j\|^2)\Big)^{1/2}
\\ & \stackrel{(*)}{=} \Big(\sum_{j=n+1}^{k} \E_{j-1}(\one_{\{j\leq \tau\wedge \nu\}} \|d\ov g_j\|^2)\Big)^{1/2}
\\ & = \Big(\sum_{j=n+1}^{\tau\wedge \nu\wedge k} \E_{j-1}(\|d\ov g_j\|^2)\Big)^{1/2}\leq s_{\tau\wedge k}(\ov g)\leq \delta_1D^{-1}\lambda,
 \end{align*}
where $(*)$ follows from the $\F_{j-1}|_{\Om_n}$-measurability of $\{j\leq \tau\wedge \nu\}\cap \Om_n$ for $j>n$ and the last step uses the definition of $\tau$.

Putting together the various inequalities and applying Lemma \ref{lem:tail} on the space $\Omega_n$ as indicated above, taking $r = (\beta-1-\delta_2)\lambda$, and using that $h^\star =0$ on $\{\mu = \infty\}$, by definition of $\eps$ we arrive at
\begin{align*}
\P( f\ss > \beta\la, \, w \le \la) &\le \P(h\ss > (\beta-1-\delta_2)\la)
\\ & = \sum_{n\geq 0} \P(\mu = n) \P_n(h\ss > (\beta-1-\delta_2)\la)
\\ & \le \sum_{n\geq 0} \P(\mu = n) \cdot 2 \Bigl(\frac{e (\delta_1\la)^2}{(\beta-1-\delta_2)\la(\delta_2\la)}\Bigr)^{(\beta-1-\delta_2)/\delta_2}
\\ & = \sum_{n\geq 0} \P(\mu = n) \cdot 2 \Bigl(\frac{e\delta_1^2}{N\delta_2^2}\Bigr)^{N}
= \eps \P(\mu<\infty) = \eps\P(f\ss >\la).
\end{align*}
\end{proof}

\begin{proof}[Proof of Theorem \ref{thm:Pinelis}]

{\em Step 1.} \ We first consider the conditional symmetric case.
Combining Lemmas \ref{lem:burk} (with $g = f\ss$ and $h = w$) and \ref{lem:checkburk} (with the choice of $\eps$, $N$ and $w$ made there) we arrive at the estimate
\begin{align*}
 \n f\ss\n_p & \le \frac{\beta}{(1-\beta^p\eps)^{1/p}}
 \n (\delta_2^{-1}dg\ss) \vee (\delta_1^{-1}D s(g))\n_p,
\end{align*}
valid for all choices of $\la>0$, $\delta_1, \delta_2>0$, $\beta> 1+\delta_2$
satisfying $ \beta^p \eps<1$.

With the choices
$$  \delta_1 := \frac1{4\sqrt{p}}, \quad \delta_2 := \frac1{2p}, \quad \beta := 2 + \delta_2 = 2+\frac1{2p}$$
we have
$N  = \frac{\beta-1-\delta_2}{\delta_2} =2p\geq 4$,
$\beta^{p/N} = (2+\frac1{2p})^{1/2} \le (\frac94)^{1/2} = \frac{3}{2}$ and $\varepsilon = 2(e/8)^N$, so
\begin{align*}
(\beta^p\eps)^{1/N} = \beta^{p/N} \cdot 2^{1/N} \frac{e}{8} \leq \frac32\cdot 2^{1/4}\cdot \frac{e}{8}=:\theta \approx 0.60611\hdots <1,
\end{align*}
so Lemma \ref{lem:burk} can be applied with these choices. This gives $1-\beta^p \eps \ge 1-\theta^N \ge 1-\theta^4
\approx 0.8650\hdots\,$, so $\beta/(1-\beta^p \eps)^{1/p} \le \frac94\cdot (1-\theta^4)^{1/2} \approx 2.0926\hdots$
and consequently
$$
\n f\ss\n_p \le \frac{\beta}{(1-\beta^p\eps)^{1/p}}\Bigl(
2p\n dg\ss\n_p + 4\sqrt{p} D\n s(g)\n_p\Bigr)\leq 5p
\n dg\ss\n_p + 10 D\sqrt{p} \n s(g)\n_p.
$$
This completes the proof in the conditional symmetric case.

\smallskip
{\em Step 2}. \ The general case will be reduced to the conditional symmetric case. This is a variation of a standard symmetrisation argument (cf.\ the proof of \cite[Theorem 4.1]{Hitc}). In view of the rather intricate setting and in order to obtain explicit constants, we present some details.

Using the terminology of \cite[Chapter 6]{GinePena}, let $(d\wt{g}_j)_{j=0}^k$ be the decoupled tangent sequence of $(dg_j)_{j=0}^k$ on a possibly enlarged probability space. There exists a $\sigma$-algebra $\mathscr{G}$ such that the sequence $(d\wt{g}_j)_{j=0}^k$ is $\mathscr{G}$-conditionally independent and such that
\[\P(d\wt{g}_j\in \cdot|\mathscr{G}) =\P(d\wt{g}_j\in \cdot|\F_{j-1}) = \P(dg_j\in \cdot|\F_{j-1}).\]
Moreover we may assume that $\mathscr{G}  = \mathscr{F}_k$, trivially extending the latter $\sigma$-algebra to the larger probability space (see \cite[p. 294]{GinePena}).
Let $\wt{f}_0 := f_0=0$ and
$\wt{f}_j := V_j \wt{f}_{j-1} + d\wt{g}_j$. Setting $F_j := f_j-\wt{f}_j$ and $G_j := g_j - \wt{g}_j$, we have
$F_0 = 0$ and $F_j = V_j F_{j-1} + dG_j$. The differences $dG_j$ are conditionally symmetric. Therefore, by the symmetric case of Theorem \ref{thm:Pinelis},
\begin{align*}
\n f^\star\n_p \leq \n F^\star\n_p + \n \wt{f}^\star\n_p
\leq 5p \n dG\ss\n_p + 10D\sqrt{p} \n s(G)\n_p + \n \wt{f}^\star\n_p.
\end{align*}
We estimate each of the three terms on the right-hand side.

As in \cite[Lemma 1 and p. 227]{Hit88},
\[\|dG^\star\|_p\leq \|dg^\star\|_p + \|d\wt{g}^\star\|_p\leq 3\|dg^\star\|_p.\]
To estimate $s(G)$ we note that $s(G)\leq s(g) + s(\wt{g}) = 2s(g)$, where we used that $\E_{j-1} \|d\wt{g}_j\|^2 =\E_{j-1} \|d{g}_j\|^2$ (see \cite[Lemma  4.4.5]{HNVW16}). Thus
\begin{align}\label{eq:splittingGg}
5p \n dG\ss\n_p + 10D\sqrt{p} \n s(G)\n_p\leq 15p \n dg\ss\n_p + 20D\sqrt{p} \n s(g)\n_p
\end{align}
To estimate $\n \wt{f}^\star\n_p$, let $(d\overline{g}_j)_{j=1}^k$ be yet another decoupled tangent sequence of $(dg_j)_{j=1}^k$ on a further enlarged probability space. This sequence can be chosen in such a way that $(d\overline{g}_j)_{j=1}^k$ and $(d\wt{g}_j)_{j=1}^k$ are ${\mathscr{G}}$-conditionally independent with $\mathscr{G}$ as before.
Let $\overline{f}_0 := f_0=0$ and
$\overline{f}_j := V_j \overline{f}_{j-1} + d\overline{g}_j$. Then also
$(\overline{f}_j)_{j=0}^k$ and $(\wt{f}_j)_{j=0}^k$ are ${\mathscr{G}}$-conditionally independent.
Therefore, by Jensen's inequality and the fact that $\E_{\mathscr{G}} \overline{f}_j = 0$ (which follows by induction using $\E_{\mathscr{G}}  d\overline{g}_j =0$),
\begin{align*}
\E_{{\mathscr{G}}}\|\wt{f}^\star\|^p  = \E_{{\mathscr{G}}}\|(\wt{f}_j)_{j=0}^k\|_{\ell^\infty_k(X)}^p \leq \E_{{\mathscr{G}}}\|(\wt{f}_j)_{j=0}^k - (\overline{f}_j)_{j=0}^k\|_{\ell^\infty_k(X)}^p = \E_{{\mathscr{G}}}|\overline{F}^\star|^p,
\end{align*}
where $\overline{F}_j = \wt{f}_j - \overline{f}_j$ and $\overline{G}_j = \wt{g}_j - \overline{g}_j$. Then $F_0 = 0$ and $\overline{F}_j = V_j \overline{F}_{j-1} + d\overline{G}_j$. As before, $(\overline{G}_j)_{j=1}^n$ is conditionally symmetric and therefore, by the symmetric case of Theorem \ref{thm:Pinelis},
\begin{align*}
\n \wt{f}^\star\n_p \leq \|\overline{F}^\star\|_p &\leq 5p \n d\overline{G}\ss\n_p + 10D\sqrt{p} \n s(\overline{G})\n_p
 \leq 15p \n dg\ss\n_p + 20D\sqrt{p} \n s(g)\n_p,
\end{align*}
where the last step is the same as \eqref{eq:splittingGg}.

The desired inequality is obtained by combining all estimates.
\end{proof}

\begin{remark}
choices of the parameters $\beta$, $\delta_1$ and $\delta_2$ lead to related inequalities, with a different behaviour of the constants in $p$. In particular, as in \cite[Theorem 4.1]{Pin} one can prove that there exists a constant $C$ such that for all $p\in [2, \infty)$
\[ \n f^\star\n_p \le \frac{C p}{\log p}(\n dg\ss\n_p + D \n s(g)\n_p),\]
and the latter growth is known to be optimal in the scalar case (see \cite{Hitc}).
\end{remark}

The next result extrapolates Theorem \ref{thm:Pinelis} to exponents  $0<p<2$. By using a variation of the method in \cite[pp. 38-39]{Burk}, an estimate is obtained without the term $\|dg^*\|_p$.

\begin{corollary}\label{cor:Pinelis}
Let $X$ be a $(2,D)$-smooth Banach space. Suppose that $ (f_j)_{j= 0}^k$ is an adapted sequence of $X$-valued random variables, $ (g_j)_{j=0}^k$ is an $X$-valued martingale,
$(V_j)_{j=1}^k$ is a sequence of random contractions on $X$ which is strongly predictable (i.e., each $V_jx$ is strongly $\F_{j-1}$ measurable for all $x\in X$), and assume that we have $f_0=g_0=0$ and
\[f_j = V_{j} f_{j-1} + dg_j, \qquad j=1, \ldots, k.\]
Then for all $0<p<2$ we have
\[ \n f^\star\n_p \le (300D)^{2/p}\n s(g)\n_p.\]
If, moreover, $(g_j)_{j=0}^k$ has conditionally symmetric increments, then
\[ \n f^\star\n_p \le (100D)^{2/p} \n s(g)\n_p.\]
\end{corollary}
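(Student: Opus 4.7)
The plan is to bootstrap the $L^2$ case of Theorem~\ref{thm:Pinelis} down to $0<p<2$ via a good-$\la$ argument in the spirit of \cite[pp.~38--39]{Burk}, exploiting the fact that the $\n dg\ss\n_p$ term can be eliminated. The key preliminary observation is the trivial bound
\begin{equation*}
\n dg\ss\n_2^2 = \E\max_{1\le j\le k} \n dg_j\n^2 \le \sum_{j=1}^k \E\n dg_j\n^2 = \E \sum_{j=1}^k \E_{j-1}\n dg_j\n^2 = \n s(g)\n_2^2,
\end{equation*}
which, substituted into Theorem~\ref{thm:Pinelis} at $p=2$, yields $\n f\ss\n_2 \le KD\n s(g)\n_2$ for a modest absolute constant $K$ (smaller in the conditionally symmetric case). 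This already removes the $\n dg\ss\n$ term at $p=2$.

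Next, fix $\la>0$ and introduce $\tau_\la := \inf\{j\ge 0 : s_{j+1}(g)>\la\}$, which is a stopping time because $s_{j+1}(g)$ is $\F_j$-measurable. Form the stopped pair $(\bar f,\bar g)$ by $\bar f_0 = \bar g_0 = 0$, $d\bar g_j := dg_j\,\one_{\{j\le\tau_\la\}}$, and $\bar f_j := V_j\bar f_{j-1} + d\bar g_j$. Since $\one_{\{j\le\tau_\la\}}$ is $\F_{j-1}$-measurable, $(\bar g_j)$ is a martingale whose increments inherit conditional symmetry from $(g_j)$, so the hypotheses of Theorem~\ref{thm:Pinelis} are preserved for $(\bar f,\bar g)$. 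By induction $\bar f_j = f_j$ for $j\le\tau_\la$, and the contractivity of $V_j$ makes $\n\bar f_j\n$ non-increasing for $j>\tau_\la$, so $\bar f\ss = \max_{j\le\tau_\la}\n f_j\n$. Moreover, $s(\bar g)^2 = s_{\tau_\la}(g)^2 \le \la^2$ by the very definition of $\tau_\la$. Applying the previous $L^2$ bound to $(\bar f,\bar g)$ gives $\n\bar f\ss\n_2 \le KD\la$, and since $\bar f\ss = f\ss$ on the event $\{s(g)\le\la\}$, Chebyshev's inequality yields the distributional bound
\begin{equation*}
\P\bigl(f\ss > r,\ s(g)\le\la\bigr) \;\le\; \P(\bar f\ss > r) \;\le\; (KD\la/r)^2.
\end{equation*}

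The extrapolation to $0<p<2$ is then carried out through the layer-cake identity
$\E(f\ss)^p = p\int_0^\infty r^{p-1}\P(f\ss>r)\ud r$
combined with the decomposition $\P(f\ss>r) \le \P(f\ss>r,\,s(g)\le\la) + \P(s(g)>\la)$ and a judicious $r$-dependent choice $\la=\la(r)$. Optimising this choice (as in \cite[pp.~38--39]{Burk}) and integrating the two contributions yields $\n f\ss\n_p \le (CD)^{2/p}\n s(g)\n_p$; the characteristic exponent $2/p$ on $D$ stems directly from the Chebyshev-type weak $L^2$ estimate. A careful tracking of the numerical constants produces the announced values $(100D)^{2/p}$ in the conditionally symmetric case and $(300D)^{2/p}$ in general.

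The main technical obstacle is the last step: the Chebyshev bound $(KD\la/r)^2$ is useful only for $r$ large relative to $\la$, so one must combine the trivial bound $\P(f\ss>r)\le 1$ (for small $r$) with the Chebyshev estimate (for larger $r$) and optimise the resulting expressions in $\la(r)$ to recover both the correct exponent $2/p$ on $D$ and the announced numerical constants.
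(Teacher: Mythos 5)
Your stopping-time construction, the stopped $L^2$ bound via Theorem~\ref{thm:Pinelis} at $p=2$, and the opening observation $\n dg\ss\n_2 \le \n s(g)\n_2$ are all correct, and the reduction is structurally the same as the paper's. The gap is in the extrapolation step, and it is a real one. The Chebyshev bound $\P(f\ss>r,\,s(g)\le\la)\le(KD\la/r)^2$ discards information: your stopped construction actually gives $\E(\bar f\ss)^2\le K_0^2D^2\,\E\,s(\bar g)^2$ with $s(\bar g)^2\le s(g)^2\wedge\la^2$ \emph{pointwise}, and replacing $\E(s(g)^2\wedge\la^2)$ by the larger $\la^2$ is exactly what breaks the argument. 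With the weaker distributional bound, the decomposition $\P(f\ss>r)\le\min\{1,(KD\la(r)/r)^2\}+\P(s(g)>\la(r))$ cannot be integrated against $p\,r^{p-1}\,\ud r$: the second piece produces a multiple of $\E\,s(g)^p$ only when $\la(r)$ grows linearly in $r$, but then $\min\{1,(KD\la(r)/r)^2\}$ is a nonzero constant and the first piece diverges, and no sublinear growth rate of $\la(r)$ reconciles the two requirements. So no ``judicious $r$-dependent choice'' of $\la(r)$ can close your version of the argument.

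The paper instead upgrades the distributional bound to a truncated second-moment domination $\E\bigl((f\ss)^2\wedge\la\bigr)\le K^2\,\E\bigl(s(g)^2\wedge\la\bigr)$ for every $\la>0$; this follows from the stopped bound together with the pointwise inequality $(f\ss)^2\wedge\la\le(\bar f\ss)^2+\la\,\one_{\{s(g)^2>\la\}}$ and the trivial $\la\,\P(s(g)^2>\la)\le\E(s(g)^2\wedge\la)$. One then applies the elementary identity $\E\, Z^q = q(1-q)\int_0^\infty\E(Z\wedge\la)\,\la^{q-2}\,\ud\la$, valid for nonnegative $Z$ and $0<q<1$, with $q=p/2$ and $Z=(f\ss)^2$, which immediately yields $\E(f\ss)^p\le K^2\,\E\,s(g)^p$ with $K$ independent of $p$, hence $\n f\ss\n_p\le K^{2/p}\n s(g)\n_p$. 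That moment identity is the missing ingredient in your sketch. (A Lenglart-style layer cake using the sharper distributional bound $\P(f\ss>r,\,s(g)\le\la)\le (K_0 D/r)^2\,\E(s(g)^2\wedge\la^2)$ with $\la=r$ also closes the argument, but it produces an extra factor $(2-p)^{-1}$ which blows up as $p\uparrow 2$ and so does not yield the announced uniform $(CD)^{2/p}$ constant.)
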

\begin{proof}
By Doob's maximal inequality   and the fact that $X$ has martingale type $2$ with constant $D$ (by Remark \ref{rem:Rad})
\[\|dg^\star\|_2\leq 2 \|g^\star\|_2 \le 4\|g\|_2\leq 4D \|s(g)\|_2.\]
Therefore, Theorem \ref{thm:Pinelis} implies
\begin{align}\label{eq:estfstarp2}
\|f^\star\|_2\leq (
4A+B)D \|s(g)\|_2,
\end{align}
where $(A,B) = (10, 10\sqrt{2})$ if $g$ has conditionally symmetric increments and $(A,B) = (60, 40\sqrt{2})$ in the general case.

For non-negative random variables $Z$ and exponents $0<q<1$ we have the identity
\begin{align}\label{eq:Z}\E |Z|^q = q(1-q) \int_0^\infty \E(Z\wedge \lambda) \lambda^{q-2} \ud \lambda.
\end{align}
Setting $K = (
4A+B)D+1$, we claim that
\[\E(|f^\star|^2\wedge \lambda) \le K^2 \E (s(g)^2\wedge \lambda), \ \ \ \  \lambda>0.\]
Once this has been verified, upon taking $q=p/2$, $Z = |f^\star|^2$, and then $Z= s(g)$ in \eqref{eq:Z}, we obtain
\begin{align*}
\E|f^\star|^p & = q(1-q) \int_0^\infty \E(|f^\star|^2\wedge \lambda) \lambda^{q-2} \ud \lambda
\\ & \leq K^2 q(1-q) \int_0^\infty \E(|s(g)|^2\wedge \lambda) \lambda^{q-2} \ud \lambda =K^2 \E |s(g)|^p
\end{align*}
and the result follows.

To prove the claim, set $\tau := \inf\{0\leq n\leq k-1: \sum_{j=1}^{n+1} \E_{j-1} \|dg_j\|^2\geq \lambda\}$, with the convention that $\tau := k$ if the set is empty. Let the adapted sequence of random variables $(F_{j})_{j=0}^k$ be defined by $F_0 := 0$ and
\[F_j := W_j F_{j-1} + dG_j,\quad j=1,\dots, k,\]
where $W_j := V_j$ if $0\le j\le \tau$, $W_j := I$ if $j>\tau$,  and $dG_j := \one_{\{0\le j\le \tau\}} dg_j$. One checks that $f_{j\wedge \tau} = F_j$ for all $j=0,\ldots, k$. Applying \eqref{eq:estfstarp2} to $F$ gives
\begin{align*}
\E \sup_{0\leq j\leq k}\|f_{j\wedge \tau}\|^2 = \E |F^{\star}|^2
& \leq (4A+B)^2D^2 \E s(G)^2
\\ & = (4A+B)^2D^2 \E \sum_{0\leq j\leq k} \one_{\{0\le j\le \tau\}}  \E_{j-1} \|dg_j\|^2
\\ & \leq  (4A+B)^2D^2\E(s(g)^2\wedge \lambda).
\end{align*}
Since $|f^{\star}|^2\wedge \lambda \leq \sup_{0\leq j\leq k}\|f_{j\wedge \tau}\|^2 + \lambda \one_{\{\tau<k\}}$, we obtain
\begin{align*}
\E(|f^\star|^2\wedge \lambda)  \leq \E \sup_{0\leq j\leq k}\|f_{j\wedge \tau}\|^2 + \E(\one_{\{\tau<k\}}\lambda ) \leq K^2\E(s(g)^2\wedge \lambda),
\end{align*}
which gives the claim.
\end{proof}

\section{Maximal inequalities for stochastic convolutions}\label{sec:main}

A family $(S(t,s))_{0\le s\le t\le T}$ of bounded operators on a Banach space
$X$ is called a {\em $C_0$-evolution family} if:
\begin{enumerate}[(1)]
\item $S(t,t) = I$  for all $t\in [0,T]$;
\item $S(t,r) = S(t,s) S(s,r)$ for all $0\le r\le s\le t\le T$;
\item the mapping $(t,s) \to S(t,s)$ is strongly
continuous on the set $\{0\le s\le t\le T\}$.
\end{enumerate}

$C_0$-Evolution family typically arise as the solution operators for the linear time-dependent problem $u'(t) = A(t)u(t)$
in much the same way as $C_0$-semigroups solve the time-independent problem $u'(t) = Au(t)$. The reader is referred to \cite{EN, Pazy, Ta1} for systematic treatments.
If $(S(t))_{t\ge 0}$ is a $C_0$-semigroup on $X$, then $S(t,s):= S(t-s)$ defines a $C_0$-evolution family
$(S(t,s))_{0\le s\le t\le T}$ for every $0<T<\infty$.

\subsection{The main result}
The following theorem is the main result of this paper.

\begin{theorem}\label{thm:contractionS-new}
Let $(S(t,s))_{0\leq s\leq t\leq T}$ be a $C_0$-evolution family of contractions on a $(2,D)$-smooth Banach space $X$ and let $W$ be an adapted $H$-cylindrical Brownian motion on $\Omega$. Then for every $g\in L_{\bF}^0(\Omega;L^2(0,T;\gamma(H,X)))$
the process $(\int_0^t S(t,s)g_s\ud W_s)_{t\in [0,T]}$ has a continuous modification which satisfies, for all $0<p<\infty$,
\[\E\sup_{t\in [0,T]}\Big\n \int_0^t S(t,s)g_s\ud W_s\Big\n^p\leq C_{p,D}^p \|g\|_{L^p(\Omega;L^2(0,T;\gamma(H,X)))}^p,\]
with a constant $C_{p,D}$ depending only on $p$ and $D$.
For $2\le p<\infty$ the inequality holds with $C_{p,D} = 10D\sqrt{p}$.
\end{theorem}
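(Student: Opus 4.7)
The plan is to apply the extended Pinelis inequality of Theorem~\ref{thm:Pinelis} (conditionally symmetric version) to a suitable Itô--Riemann-sum discretisation $\tilde u$ of $u$, rather than to the exact discrete samples $(u_{t_k})$. The reason is that for the exact samples the conditional second moment $\E_{k-1}\|\int_{t_{k-1}}^{t_k}S(t_k,s)g_s\ud W_s\|^2$ would cost an extra factor $D^2$ coming from the Neidhardt-type inequality~\eqref{eq:Neid}, whereas for the discretisation the corresponding martingale difference is a single conditionally Gaussian vector in $X$ whose second moment is exact --- this is what produces the constant $10D\sqrt p$ rather than $10D^2\sqrt p$.

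By a density argument based on the a priori estimate of Proposition~\ref{prop:Seid}, it suffices to treat adapted finite-rank step processes $g = \sum_j \one_{(s_{j-1},s_j]}\xi_j$ with strongly $\F_{s_{j-1}}$-measurable $\xi_j \in \gamma(H,X)$. For such $g$ the process $u$ is an explicit finite linear combination of scalar Brownian integrals with strongly-continuous-in-$t$ coefficients, and so admits a continuous modification. Fix a sequence $\pi_n$ of partitions refining $\{s_j\}$ with mesh $\|\pi_n\| \to 0$ and set $V_k^{(n)} := S(t_k^{(n)}, t_{k-1}^{(n)})$. Define the discrete scheme
\[
\tilde u_k^{(n)} := \sum_{i=1}^k S(t_k^{(n)}, t_{i-1}^{(n)})\,g_{t_{i-1}^{(n)}}\,(W_{t_i^{(n)}} - W_{t_{i-1}^{(n)}}).
\]
The evolution-family identity $S(t_k^{(n)}, t_i^{(n)}) = V_k^{(n)}\, S(t_{k-1}^{(n)}, t_i^{(n)})$ (valid for $i \le k-1$) yields the recursion
\[
\tilde u_k^{(n)} = V_k^{(n)}\tilde u_{k-1}^{(n)} + d\tilde g_k^{(n)}, \qquad d\tilde g_k^{(n)} := V_k^{(n)} g_{t_{k-1}^{(n)}}\,(W_{t_k^{(n)}} - W_{t_{k-1}^{(n)}}).
\]
Conditionally on $\F_{t_{k-1}^{(n)}}$, the operator $V_k^{(n)} g_{t_{k-1}^{(n)}} \in \gamma(H,X)$ is deterministic and the Brownian increment is independent and centred Gaussian, so $d\tilde g_k^{(n)}$ is a conditionally centred Gaussian vector in $X$; in particular it is conditionally symmetric, and its conditional second moment is exactly $(t_k^{(n)} - t_{k-1}^{(n)})\|V_k^{(n)} g_{t_{k-1}^{(n)}}\|_\gamma^2 \le (t_k^{(n)} - t_{k-1}^{(n)})\|g_{t_{k-1}^{(n)}}\|_\gamma^2$. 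Since $\pi_n$ refines the breakpoints of $g$, summing in $k$ gives $s(\tilde g^{(n)})^2 \le \int_0^T\|g_s\|_\gamma^2\ud s$ with no $D^2$ factor, and Theorem~\ref{thm:Pinelis} in its conditionally symmetric form then produces
\[
\Big\|\sup_k \|\tilde u_k^{(n)}\|\Big\|_p \le 5p\,\|d\tilde g^{\star,(n)}\|_p + 10D\sqrt p\,\|g\|_{L^p(\Omega;L^2(0,T;\gamma(H,X)))}.
\]

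It then remains to pass to the limit $n\to\infty$. The diagonal term $\|d\tilde g^{\star,(n)}\|_p$ vanishes: each $d\tilde g_k^{(n)}$ is conditionally Gaussian in $X$ with standard deviation of order $\sqrt{\|\pi_n\|}$, so Gaussian concentration combined with the uniform boundedness of $\|g_{t_{k-1}^{(n)}}\|_\gamma$ on a step process gives $\|d\tilde g^{\star,(n)}\|_p = O(\sqrt{\|\pi_n\|\log N_n}) \to 0$. The approximation error $e_k^{(n)} := u_{t_k^{(n)}} - \tilde u_k^{(n)}$ satisfies a recursion of the same form with martingale difference $\int_{t_{k-1}^{(n)}}^{t_k^{(n)}}[S(t_k^{(n)},s) - V_k^{(n)}]g_s\ud W_s$; because strong continuity of $S$ is automatically uniform on the finite-dimensional range of a step process, $\|[S(t_k^{(n)},s) - V_k^{(n)}]g_s\|_\gamma \le \varepsilon_n\|g_s\|_\gamma$ for $s \in (t_{k-1}^{(n)}, t_k^{(n)}]$ with $\varepsilon_n \to 0$, and a second application of Theorem~\ref{thm:Pinelis} then shows $\|\sup_k\|e_k^{(n)}\|\|_p \to 0$. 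Combined with the almost sure continuity of $u$, Fatou's lemma yields the inequality for step $g$; density handles general $g \in L^p_{\bF}(\Omega;L^2(0,T;\gamma(H,X)))$, a good-$\lambda$ extrapolation as in the proof of Corollary~\ref{cor:Pinelis} covers $0 < p < 2$, and standard localisation covers $g \in L^0_{\bF}$. The main technical obstacle is making this limiting argument rigorous at the borderline exponent $p = 2$, where the crude estimate $\|\max_k X_k\|_p^p \le \sum_k \|X_k\|_p^p$ produces no decay and one must genuinely exploit the Gaussian tail to defeat the $\sqrt{\log N_n}$ factor.
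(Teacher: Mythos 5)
Your core mechanism is exactly the paper's: discretise into a recursion $f_k = V_k f_{k-1} + dg_k$ with predictable contractions $V_k = S(t_k,t_{k-1})$ and conditionally Gaussian increments whose conditional second moments are exact (no factor $D^2$), and feed this into Theorem~\ref{thm:Pinelis}. There is, however, a genuine gap at the opening step. The claim that for a finite-rank step process $g$ the exact convolution $u_t = \int_0^t S(t,s)g_s\ud W_s$ is ``an explicit finite linear combination of scalar Brownian integrals with strongly-continuous-in-$t$ coefficients'' and therefore continuous is not correct. For $t$ in the current step $(s_{j-1},s_j]$ the corresponding contribution is $\int_{s_{j-1}}^t S(t,s)\xi_j\ud W_s$, and over the range $s_{j-1}<s<t$ the $s$-dependence of $S(t,s)$ does not factor out from the $t$-dependence (not even in the semigroup case $S(t,s)=S(t-s)$); this term is itself a non-trivial stochastic convolution of exactly the kind whose continuity the theorem is supposed to produce. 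Since your passage from partition points to $\sup_{t\in[0,T]}$ via Fatou explicitly invokes ``the almost sure continuity of $u$'', the gap is load-bearing. The paper avoids it by working with the piecewise-frozen-kernel process $v^{(n)}_t = \int_0^t S(t,\sigma_n(s))g_s\ud W_s$, which is continuous by inspection because the $s$-dependence of the kernel is piecewise constant (so $v^{(n)}$ decomposes into strongly continuous operators applied to $t$-free stochastic integrals); it then shows that $(v^{(n)})$ is Cauchy in $L^p(\Omega;C([0,T];X))$ using the already-established maximal estimate applied to the difference $v^{(n)}-v^{(m)}$, and finally identifies the continuous limit with $u$ through pointwise $L^2$-convergence. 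Continuity of $u$ is an output of the limiting procedure, not an input.

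A smaller point: you flag the borderline exponent $p=2$ as the main technical obstacle and propose defeating the diagonal term by Gaussian concentration to gain a $\sqrt{\log N_n}$ factor. That is more than is needed. The paper first proves the bound for $p>2$, where the crude estimate
\[
\|dG^\star\|_p^p\le\sum_k\E\|dG_k\|^p\le\mathrm{mesh}(\pi)^{\frac p2-1}\,\E\|g\|_{L^p(0,T;\gamma_p(H,X))}^p
\]
already vanishes as $\mathrm{mesh}(\pi)\to0$, and then obtains $p=2$ simply by letting $p\downarrow2$ (for a finite-rank adapted step process all moments are finite, so this limit is immediate).
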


The stochastic integral is well defined by \eqref{eq:Neid}.
By rescaling, more generally it may be assumed that there exists a $\la\geq 0$ such that
that $$\|S(t,s)\| \le e^{\la(t-s)}, \qquad 0\leq s\leq t\leq T.$$
The estimate of the theorem then holds with constant $C_{p,D}$ replaced with $e^{\lambda T} C_{p,D} $.
\begin{proof}
The proof is split into four steps. In the first two steps we prove the theorem for $2\le p<\infty$, in the third step we
consider the case $0<p<2$, and in the fifth the pathwise continuity assertion for $p=0$.

\smallskip
{\em Step 1}. \
Fix a partition $\pi:= \{r_0,\dots,r_N\}$, where $0= r_0<r_1<\ldots <r_N=T$, and let $(K(t,s))_{0\leq s\leq t\leq T}$ be a family of contractions on $X$ with the following properties:
\begin{enumerate}[\rm(i)]
 \item\label{it:K1} $K(t,\cdot)$ is constant on $[r_{j-1}, r_j)$ for all $t\in [0,T]$ and $j = 1,\ldots, N$;
 \item\label{it:K2} $K(\cdot, s)$ is strongly continuous for all $s\in [0,T]$;
 \item\label{it:K3} $S(t,r) K(r,s) = K(t,s)$ for all $0\le s\le r\le t\le T$.
\end{enumerate}
Let $g\in L_{\bF}^p(\Omega;L^2(0,T;\gamma(H,X)))$ and define the process $(v_t)_{t\in [0,T]}$ by
$$v_t := \int_0^t K(t,s) g_s \ud W_s, \quad t\in [0,T].$$
Properties \eqref{it:K1} and \eqref{it:K2} imply that the process $(v_t)_{t\in [0,T]}$ is well defined and has a modification with continuous paths. Indeed, for $t\in [r_{j-1}, r_j]$
\begin{align*}
\int_0^t K(t,s) g_s \ud W_s = \sum_{k=1}^{j-1} K(t,r_{k-1}) \int_{r_{k-1}}^{r_{k}} g_s\ud W_s + K(t,r_{j-1})\int_{r_{j-1}}^{t} g_s\ud W_s,
\end{align*}
which can be seen to have a continuous modification.
Working with such a modification, we will first prove that for all $2\le p<\infty$ we have
\begin{align}\label{eq:maxest-Kg}
\Big\|\sup_{t\in [0,T]}\n v_t\n\Big\|_p\leq 10D\sqrt{p}\|g\|_{L^p(\Omega;L^2(0,T;\gamma(H,X)))}.
\end{align}
By a limiting argument it suffices to consider $p>2$.

For the proof of \eqref{eq:maxest-Kg}, by density we may assume that $g$ is as in \eqref{eq:simple}, i.e.,
\begin{equation*}
g  = \sum_{j=1}^{k} \one_{(s_{j-1},s_{j}]} \sum_{i=1}^\ell h_i\otimes \xi_{ij},
\end{equation*}
where $0= s_0<s_1<\ldots <s_k=T$ and $h_i$ and $\xi_{ij}$ are as in \eqref{eq:simple}. Refining $\pi$ if necessary, we may assume that $s_j\in \pi$ for all $j=0,\dots,k$. We prove \eqref{eq:maxest-Kg} in two steps.

\smallskip
{\em Step 1a}. \
Let $\pi' = \{t_0, t_1, \ldots, t_m\}\subseteq [0,T]$ be another partition. It suffices to prove the bound
\begin{align}\label{eq:toprovemaximalineq}
\Big\|\sup_{t\in \pi'}\n v_t\n\Big\|_p \le a_{\pi'} + 10D\sqrt{p}
\|g\|_{L^p(\Omega;L^2(0,T;\gamma(H,X)))}
\end{align}
with $a_{\pi'} = o(\hbox{mesh}(\pi'))$ as mesh$(\pi')\to 0$.
Refining $\pi'$ if necessary, we may assume that $\pi\subseteq\pi'$.

For fixed $j=1,\ldots,m$ we have, by property  \eqref{it:K3},
\begin{align*}f_{j} := v_{t_j} & = S(t_j, t_{j-1}) v_{t_{j-1}} + \int_{t_{j-1}}^{t_j} K(t_j,s) g_s \ud W_s
 \\ & =: V_{j} f_{j-1} + dG_j,
\end{align*}
where we set $V_{j} := S(t_j, t_{j-1})$ and $dG_j:=\int_{t_{j-1}}^{t_j} K(t_j,s) g_s \ud W_s$. We further set $f_0:=0$ and $G_0:=0$.
By using the symmetry of normally distributed random variables as in \cite[Proposition 4.4.6]{HNVW16} it is seen that
the difference sequence $(dG_j)_{j=1}^m$ is conditionally symmetric. Therefore, by Theorem \ref{thm:Pinelis},
\begin{align}\label{eq:Pinelisvariant}
\n f^\star\n_p \le 5p \n dG\ss\n_p + 10D\sqrt{p} \n s(G)\n_p,
\end{align}
where $f = (f_j)_{j=0}^m$ and $G = (G_j)_{j=0}^m$.

\smallskip
{\em Step 1b}. \
For all $q\in [2, \infty)$ and all $1\le j\le m$, the independence of $W_{t_{j}}-W_{t_{j-1}}$ and $\F_{t_{j-1}}$ implies (see \cite[9.10]{Wil})
\begin{align*}
\E_{j-1}\|dG_j\|^q &= \E_{j-1}\Big\|\sum_{i=1}^\ell (W_{t_{j}}-W_{t_{j-1}})h_i K(t_j, t_{j-1})g_{t_{j-1}} h_i\Big\|^q
\\ & \leq \E_{j-1}\Big\|\sum_{i=1}^\ell (W_{t_{j}}-W_{t_{j-1}})h_i g_{t_{j-1}} h_i\Big\|^q
\\ & = \wt\E \Big\|\sum_{i=1}^\ell (t_{j} - t_{j-1})^{1/2} \wt{\gamma}_{ij} g_{t_{j-1}} h_i \Big\|^q
\\ & = (t_{j} - t_{j-1})^{q/2} \|g_{t_{j-1}}\|_{\gamma_q(H,X)}^q,
\end{align*}
where $(\wt{\gamma}_{ij})_{i\geq 1,j\geq 1}$ is a doubly indexed Gaussian sequence on an independent probability space $(\wt\Om,\wt\F,\wt \P)$ and $\gamma_q(H,X)$ denotes the space $\gamma(H,X)$ endowed with the equivalent $L^q$-norm as discussed in Subsection \ref{subsec:SI}.
We used that $K(t_j, s) = K(t_j, t_{j-1})$ and $g_s = g_{t_{j-1}}$ for $s\in [t_{j-1}, t_j)$. Consequently,
\begin{equation}\label{eq:est-q}
\begin{aligned}
\sum_{j=1}^m \E_{j-1}\|dG_j\|^q
& = \sum_{j=1}^m (t_{j} - t_{j-1})^{q/2} \|g_{t_{j-1}}\|_{\gamma_q(H,X)}^q
\\ & \leq (\text{mesh}(\pi))^{\frac{q}{2}-1} \|g\|_{L^2(0,T;\gamma_q(H,X))}^q.
\end{aligned}
\end{equation}
Applying \eqref{eq:est-q} with $q=p$ and taking expectations, we obtain
\begin{align*}
\n dG\ss\n_p^p & \leq \Big\|\Big(\sum_{j=1}^m \|dG_j\|^p \Big)^{1/p}\Big\|_p^p
 \\ & = \E \sum_{j=1}^m \|dG_j\|^p \leq (\text{mesh}(\pi))^{\frac{p}{2}-1} \E\|g\|_{L^p(0,T;\gamma_p(H,X))}^p.
\end{align*}
Applying \eqref{eq:est-q}  with $q=2$, we obtain
\[\n s(G)\n_p \leq \|g\|_{L^p(\Omega;L^2(0,T;\gamma(H,X)))}.\]
Substituting these bounds into \eqref{eq:Pinelisvariant}, we obtain
\begin{align*}
\Big\|\sup_{t\in \pi'}\n v_t\n\Big\|_p  = \n f\ss \n_p &
\leq 5p \n dG\ss\n_p + 10D\sqrt{p} \n s(G)\n_p
\\ & \leq 5p\, (\text{mesh}(\pi'))^{\frac{1}{2}-\frac{1}{p}} \|g\|_{L^p(\Omega;L^p(0,T;\gamma_p(H,X)))}
\\ & \qquad + 10D\sqrt{p}\|g\|_{L^p(\Omega;L^2(0,T;\gamma(H,X)))}.
\end{align*}
Since $p>2$, this proves \eqref{eq:toprovemaximalineq} for finite rank adapted step processes $g$.

\smallskip
{\em Step 2}. \ Fix $g\in L_{\bF}^p(\Om;L^2(0,T;\gamma(H,X)))$ and $n\in \N$. Set $\sigma_n(s) := j 2^{-n}T$ for $s\in [j2^{-n}T, (j+1)2^{-n}T)$ and define $S_n(t,s) := S(t,\sigma_n(s))$ and
\[v^{(n)}_t := \int_0^t S_n(t,s) g_s \ud W_s.\]
The assumptions  \eqref{it:K1}--\eqref{it:K3} in Steps 1 and 2 apply to $K(t,s) = S_n(t,s)$, $N = 2^n$, and $r_j = j2^{-n}T$. By what has been shown in these steps, the process $v^{(n)}$ has a continuous modification. Moreover, noting that for $n\geq m$ we have $$v^{(n)}_t - v^{(m)}_t = \int_0^t S_n(t,s)(I- S(\sigma_n(s),\sigma_m(s))) g_s \ud W_s,$$ from Step 1 we obtain
\begin{align*}
\Big\|\sup_{t\in [0,T]} \n v^{(n)}-v^{(m)}\n\Big\|_p & \leq 10D\sqrt{p}  \big\|(I- S(\sigma_n(\cdot),\sigma_m(\cdot)))g\big\|_{L^p(\Omega;L^2(0,T;\gamma(H,X)))}.
 \end{align*}
Since the right-hand side
tends to zero by the dominated convergence theorem, $(v^{(n)})_{n\geq 1}$ is a Cauchy sequence in $L^p(\Omega;C([0,T];X))$ and hence converges to some $\wt v$ in $L^p(\Omega;C([0,T];X))$. On the other hand, for all $t\in [0,T]$ we have
$$v^{(n)}_t\to \int_0^t S(t,s) g_s \ud W_s =: u_t$$ with convergence in $L^2(\Omega;X)$. Therefore, $\wt v$ is the required continuous modification of $u$.  Applying Step 1 again we obtain
\begin{align*}
\Big\|\sup_{t\in [0,T]} \n u_t\| \Big\|_{p} = \lim_{n\to \infty} \Big\|\sup_{t\in [0,T]} \n v^{(n)}_t\|\Big\|_{p}\leq 10D\sqrt{p} \|g\|_{L^p(\Omega;L^2(0,T;\gamma(H,X)))}.
\end{align*}

{\em Step 3}. \ In the case $0<p<2$ one can argue in the same way as in the previous steps, using Corollary \ref{cor:Pinelis} instead of Theorem \ref{thm:Pinelis}.
The estimate \eqref{eq:Pinelisvariant} simplifies as the term $\|dG^*\|_p$ does not appear anymore. Alternatively, one could use a standard extrapolation argument involving Lenglart's inequality
\cite[Proposition IV.4.7]{RY}.

\smallskip
{\em Step 4}. \ The continuity assertion for $p=0$ follows  by a standard localisation argument.
\end{proof}

As a consequence of Theorem \ref{thm:contractionS-new}, a simple optimisation argument in the exponent $p$ gives the following exponential tail estimate (see \cite[Corollary 4.4]{NV20a} for details).

\begin{corollary}[Exponential tail estimate]\label{cor:expontail}
If, in addition to the conditions of Theorem \ref{thm:contractionS-new}, we have $g\in L^\infty(\Omega;L^2(0,T;\gamma(H,X)))$, then
\[\P\Bigl(\sup_{t\in [0,T]}\Big\n \int_0^t S(t,s)g_s \ud W_s\Big\n\geq r\Bigr) \leq 2\exp\Bigl(-\frac{r^2}{2\sigma^2}\Bigr), \qquad r>0,\]
where $\sigma^2 = 100eD^2\n g\n_{L^\infty(\Omega;L^2(0,T;\gamma(H,X)))}^2$.
\end{corollary}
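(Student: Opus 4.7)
The plan is to combine Theorem~\ref{thm:contractionS-new} with Markov's inequality and then optimise the moment exponent. Setting
$$M := \sup_{t\in [0,T]}\Big\n \int_0^t S(t,s) g_s \ud W_s\Big\n, \qquad A := \n g\n_{L^\infty(\Omega;L^2(0,T;\gamma(H,X)))},$$
Theorem~\ref{thm:contractionS-new} together with the essential boundedness of $g$ yields $\n M\n_p \le 10D\sqrt{p}\,A$ for every $p\in[2,\infty)$. Markov's inequality then gives, for every $r>0$ and every $p\in[2,\infty)$,
$$\P(M \geq r) \le \bigl(10D\sqrt{p}\,A/r\bigr)^p.$$

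Writing $K := 10DA$, so that $\sigma^2 = eK^2$, an elementary calculation (differentiating $p\mapsto p\log(K\sqrt{p}/r)$) shows that the right-hand side $(K\sqrt{p}/r)^p$ is minimised over $p>0$ at $p_\star := r^2/(eK^2) = r^2/\sigma^2$, with minimum value $\exp(-p_\star/2) = \exp(-r^2/(2\sigma^2))$. Hence for $r\geq \sigma\sqrt{2}$ the optimiser satisfies $p_\star\ge 2$ and is therefore admissible, and the desired inequality follows immediately:
$$\P(M\geq r) \le \exp\bigl(-r^2/(2\sigma^2)\bigr) \le 2\exp\bigl(-r^2/(2\sigma^2)\bigr).$$

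The remaining range $0<r<\sigma\sqrt{2}$ is absorbed into the prefactor $2$. For $r\le \sigma\sqrt{2\log 2}$ one has $2\exp(-r^2/(2\sigma^2))\ge 1$, so the trivial bound $\P(M\ge r)\le 1$ suffices. For the short interval $\sigma\sqrt{2\log 2} < r < \sigma\sqrt{2}$, I would take $p=2$ in Markov to obtain $\P(M\ge r)\le 2K^2/r^2 = 2\sigma^2/(er^2)$; the elementary inequality $1+\log u \ge u/2$ valid on $u\in[2\log 2, 2]$ (by concavity of $u\mapsto 1+\log u - u/2$ together with checking the two endpoints) then shows that this estimate is dominated by $2\exp(-r^2/(2\sigma^2))$.

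Conceptually, Theorem~\ref{thm:contractionS-new} does all the heavy lifting; the only genuinely delicate point is that the optimiser $p_\star$ falls below the admissible threshold $p=2$ for small~$r$, which forces the modest two-case analysis and explains the appearance of the factor~$2$ in front of the exponential.
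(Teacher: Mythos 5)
Your proof is correct and follows exactly the approach the paper indicates (Markov's inequality combined with optimisation over the exponent $p$ in the $O(\sqrt{p})$ moment bound from Theorem~\ref{thm:contractionS-new}; the paper itself defers the details to~\cite[Corollary~4.4]{NV20a}). Your careful handling of the range $0<r<\sigma\sqrt{2}$ where the optimiser $p_\star=r^2/\sigma^2$ falls below the admissible threshold $p=2$ is precisely what justifies the prefactor~$2$.
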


This method to derive exponential tail estimates only uses that the constant $C_{p,X}$ in the maximal estimate  has order $O(\sqrt{p})$ for $p\to \infty$. By the same method, similar exponential tail estimates can therefore be deduced from all other results in this paper where the constant is of asymptotic order $O(\sqrt{p})$ .

\begin{remark}\label{rem:Itoformexp}
Under additional assumptions on the evolution family (which are satisfied in the case of $C_0$-semigroups of contractions), a variant of It\^o's formula can be used to give an alternative proof of the estimate of Corollary \ref{cor:expontail} with sharper variance $\sigma^2 = 2D^2\n g\n_{L^\infty(\Omega;L^2(0,T;\gamma(H,X)))}^2$ (see \cite[Theorem 5.6]{NV20a}).
\end{remark}

\subsection{The non-contractive case} We briefly discuss two sets of sufficient conditions for the existence of continuous versions and the validity of maximal estimates for general (i.e., not necessarily contractive) $C_0$-evolution families $(S(t,s))_{0\leq s\leq t\leq T}$. The first of these replaces the condition `$g\in L_{\bF}^0(\Om;L^2(0,T;\gamma(H,X)))$' by `$g\in L_{\bF}^0(\Om;L^q(0,T;\gamma(H,X)))$ for some $q>2$'. Under this stronger assumption, a maximal inequality for general $C_0$ semigroups on Hilbert spaces was obtained  by Da Prato, Kwapie\'n, and Zabczyk \cite{DPKZ} by the so-called factorization method. It was extended to $C_0$-evolution families on Hilbert by Seidler \cite{Sei93}. His proof extends {\em mutatis mutandis} to give the following result, which is taken from \cite{NV20a} where a further discussion is to be found.

\begin{proposition}[Additional time regularity]\label{prop:fact} Let $(S(t,s))_{0\le s\le t\le T}$ be a $C_0$-evol\-ution family on a $(2,D)$-smooth Banach space $X$ and let $2<q<\infty$. For all $g\in L_{\bF}^0(\Omega;L^q(0,T;\gamma(H,X)))$ the process $(\int_0^t S(t,s)g_s \ud W_s)_{t\in [0,T]}$ has a continuous modification which satisfies, for all $0<p\le q$,
\begin{align*}
\E\sup_{t\in [0,T]}\Big\n \int_0^t S(t,s)g_s \ud W_s\Big\n^p \leq C_{p,q,D,T}^p C_{S,T}^p\|g\|_{L^p(\Omega;L^q(0,T;\gamma(H,X)))}^p,
\end{align*}
where $C_{S,T} := \sup_{0\le s\le t\le T}\|S(t,s)\|$.
\end{proposition}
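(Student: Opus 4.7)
The strategy is the classical factorization method of Da Prato--Kwapie\'n--Zabczyk, now carried out in the $(2,D)$-smooth setting. The plan is to pick an auxiliary parameter $\alpha\in(1/q,1/2)$---a range that is nonempty precisely because $q>2$---and to use the beta-function identity
\[\int_s^t (t-r)^{\alpha-1}(r-s)^{-\alpha}\ud r=\frac{\pi}{\sin(\pi\alpha)}\]
together with the stochastic Fubini theorem to rewrite $u_t$ as
\[u_t=\frac{\sin(\pi\alpha)}{\pi}\int_0^t (t-r)^{\alpha-1}S(t,r)\,Y_r\ud r,\qquad Y_r:=\int_0^r (r-s)^{-\alpha}S(r,s)g_s\ud W_s.\]
The continuous modification of $u$ is then read off the right-hand side: once $Y\in L^q(0,T;X)$ almost surely (which will follow from the bounds below), the integrability of $(t-\cdot)^{\alpha-1}$ near $t$---ensured by $\alpha>1/q$---combined with the strong continuity of $S$ and dominated convergence force $t\mapsto\int_0^t(t-r)^{\alpha-1}S(t,r)Y_r\ud r$ to be continuous. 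Applying H\"older's inequality with exponents $q$ and $q'=q/(q-1)$ in the outer $r$-integral and bounding $\n S(t,r)\n\le C_{S,T}$ yields the pathwise estimate
\[\sup_{t\in[0,T]}\n u_t\n\le c_{\alpha,q,T}\,C_{S,T}\,\n Y\n_{L^q(0,T;X)}.\]

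The plan for the borderline exponent $p=q$ is to apply Proposition~\ref{prop:Seid} with exponent $q$ to the martingale $Y_r$, use the ideal property $\n S(r,s)g_s\n_{\gamma(H,X)}\le C_{S,T}\n g_s\n_{\gamma(H,X)}$, and then integrate in $r$ by means of Young's convolution inequality; the latter is legitimate because the choice $\alpha<1/2$ places the kernel $r\mapsto r^{-2\alpha}$ in $L^1(0,T)$. This chain of estimates gives
\[\Bigl(\E\n Y\n_{L^q(0,T;X)}^q\Bigr)^{1/q}\le C\,C_{S,T}\,\n g\n_{L^q(\Omega;L^q(0,T;\gamma(H,X)))},\]
and combining with the pathwise bound of the previous paragraph settles the proposition when $p=q$.

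To pass from $p=q$ to general $0<p<q$, I plan to invoke the version of Lenglart's domination inequality already used in Step~4 of the proof of Theorem~\ref{thm:contractionS-new}. Set $X_t:=\sup_{s\le t}\n u_s\n^q$ (continuous and non-decreasing) and $A_t:=K\,\n g\n_{L^q(0,t;\gamma(H,X))}^q$, with $K$ the constant from the $p=q$ estimate; $A$ is continuous, adapted, and non-decreasing. The required domination $\E X_\tau\le\E A_\tau$ for every bounded stopping time $\tau$ follows from the $p=q$ bound applied to the stopped integrand $\tilde g:=g\one_{[0,\tau]}$: the convolution $\tilde u_t:=\int_0^t S(t,s)\tilde g_s\ud W_s$ agrees with $u_t$ for $t\le\tau$, so $\sup_{t\le\tau}\n u_t\n\le\sup_{t\in[0,T]}\n\tilde u_t\n$, and the $p=q$ estimate applies to $\tilde u$ with $\n\tilde g\n_{L^q(0,T;\gamma)}^q=\n g\n_{L^q(0,\tau;\gamma)}^q$. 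Lenglart's inequality with $k=p/q\in(0,1)$ then delivers the $L^p$ bound claimed in the proposition, with a constant of the form $C(p,q,D,T)\,C_{S,T}^{O(1)}$.

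The main obstacles are (i) the careful justification of the stochastic Fubini interchange in the $2$-smooth Banach space setting---which requires enough integrability of the integrand to make both iterated integrals well defined and which one usually sets up via approximation by finite-rank step processes---and (ii) the stopping-time reduction for Lenglart: because the evolution family structure $S(t,s)$ rules out the naive identity $u^\tau_t=\int_0^t S(t,s)g_s\one_{s\le\tau}\ud W_s$, one has to exploit that only the one-sided inequality $\sup_{t\le\tau}\n u_t\n\le\sup_{t\in[0,T]}\n\tilde u_t\n$ is needed, and this survives because the two integrals do agree on $\{t\le\tau\}$.
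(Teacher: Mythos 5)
Your proposal reconstructs the factorization method of Da Prato--Kwapie\'n--Zabczyk and Seidler, which is exactly what the paper points to: the authors do not write out a proof, instead citing \cite{DPKZ}, \cite{Sei93} and stating that Seidler's argument extends \emph{mutatis mutandis}. Your version is correct in its essentials: the window $\alpha\in(1/q,1/2)$ is precisely what makes both the pathwise H\"older step (needs $(\alpha-1)q'>-1$, i.e.\ $\alpha>1/q$) and the Young $L^1*L^{q/2}\to L^{q/2}$ step (needs $2\alpha<1$) work, the evolution-family identity $S(t,r)S(r,s)=S(t,s)$ together with the beta-function kernel recovers $u_t$ after a stochastic Fubini interchange, and the Lenglart reduction via the stopped integrand $g\one_{[0,\tau]}$ is the correct way to extrapolate to $0<p<q$; in particular your remark that only the one-sided coincidence $u_t=\tilde u_t$ on $\{t\le\tau\}$ is required is the relevant observation, since the stopped convolution $u^\tau$ is \emph{not} a stochastic convolution for the same evolution family.

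One small caveat about the constant: the factorization argument uses $\|S(\cdot,\cdot)\|\le C_{S,T}$ twice, once in the pathwise H\"older bound on the outer Bochner integral and once through the ideal property inside the estimate for $\E\|Y\|_{L^q(0,T;X)}^q$, so it naturally produces a prefactor proportional to $C_{S,T}^{2p}$ rather than the $C_{S,T}^{p}$ appearing in the statement (where $C_{p,q,D,T}$ is explicitly not allowed to depend on $S$). This is inherent to the method (the original DPKZ argument on Hilbert spaces also yields the square of the uniform bound of the semigroup) and does not affect the substance of the proposition; but if one wishes to track the $S$-dependence literally as stated, it should be recorded that this proof gives the slightly weaker normalization $C_{p,q,D,T}^p\,C_{S,T}^{2p}$.
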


In the second result we assume that $g$ has additional space regularity. Although this may not seem surprising,  we have not been able to find a reference for this in the literature, and for this reason we provide a detailed proof. The result will play a role in Theorem \ref{thm:generalapprox1}, where convergence rates for time discretisation schemes are studied under space regularity assumptions on $g$.

When $A$ is generator of a $C_0$-semigroup on the Banach space $X$, for $\nu\in (0,1)$ we denote by $X_{\nu,\infty} = :(X,\Dom(A))_{\nu,\infty}$ the real interpolation space between $X$ (see \cite{Lun} for more details).

\begin{proposition}[Additional space regularity]\label{prop:maximalineqspacereg}
Let $A$ be the generator of a $C_0$-semigroup $S=(S(t))_{t\geq 0}$ on a $(2,D)$-smooth Banach space $X$ and let
$0<\nu<1$. For all $g\in L_{\bF}^0(\Omega;L^2(0,T;\gamma(H,X_{\nu,\infty})))$
the process $(\int_0^t S(t-s)g_s\ud W_s)_{t\in [0,T]}$, as an $X$-valued process, has a continuous modification which satisfies, for all $0<p<\infty$,
\[\E\sup_{t\in [0,T]}\Big\n \int_0^t S(t-s)g_s\ud W_s\Big\n^p\leq C_{p,D,T,\nu}^p C_{S,T}^p \|g\|_{L^p(\Omega;L^2(0,T;\gamma(H,X_{\nu,\infty})))}^p,\]
where $C_{S,T} = \sup_{0\leq t\leq T} \|S(t)\|$.
\end{proposition}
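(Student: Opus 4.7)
The plan is to split the convolution via the semigroup law, handle one piece as a simple martingale, and control the remainder using the Da Prato--Kwapie\'n--Zabczyk factorization method with the extra smoothing granted by the $X_{\nu,\infty}$-regularity of $g$. By a standard density and localisation argument, I may assume $g$ is a finite-rank adapted step process taking values in $\gamma(H, X_{\nu,\infty})$, and that $p \in [2,\infty)$; extension to all $p \in (0,\infty)$ will follow by a Lenglart-type extrapolation.

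The key algebraic identity is $S(t-s) g_s = S(t) g_s - S(t-s) [S(s) - I] g_s$, obtained from $S(t) = S(t-s) S(s)$. Integrating against $dW_s$ yields
\[
u_t = S(t) M_t - v_t, \qquad M_t := \int_0^t g_s \ud W_s, \qquad v_t := \int_0^t S(t-s) h_s \ud W_s,
\]
with $h_s := [S(s) - I] g_s$. The martingale $M_t$ lives in $X$ (using the continuous embedding $\gamma(H, X_{\nu,\infty}) \hookrightarrow \gamma(H,X)$), so by Doob's maximal inequality and Proposition~\ref{prop:Seid},
$\|\sup_{t} \|S(t) M_t\| \|_p \leq C_{S,T} \cdot C_{p,D} \|g\|_{L^p(\Omega; L^2(0,T;\gamma(H,X_{\nu,\infty})))}$.

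For $v_t$, the additional space regularity provides a smoothing estimate via real interpolation. The operators $I - S(s)$ are bounded from $X$ to $X$ with norm $\leq 1 + C_{S,T}$, and from $\Dom(A)$ to $X$ with norm $\leq s\,C_{S,T}$ (using $I - S(s) = -\int_0^s S(r) A \cdot \ud r$ on $\Dom(A)$). Real interpolation gives $\|h_s\|_{\gamma(H,X)} \leq K s^\nu \|g_s\|_{\gamma(H, X_{\nu,\infty})}$ with $K$ controlled by $C_{S,T}$ and $\nu$. Now apply the factorization identity for $\alpha \in (0, 1/2)$,
\[
v_t = c_\alpha \int_0^t (t-r)^{\alpha - 1} S(t-r) Z_r \ud r, \qquad Z_r := \int_0^r (r-s)^{-\alpha} S(r-s) h_s \ud W_s,
\]
with $c_\alpha = \sin(\pi\alpha)/\pi$. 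H\"older in $r$ gives $\sup_t \|v_t\| \leq C_{\alpha,q,T}\, C_{S,T}\, \|Z\|_{L^q(0,T;X)}$ for any $q > 1/\alpha$.

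To bound $\|Z\|_{L^q}$ in $L^p(\Omega)$, apply Proposition~\ref{prop:Seid} to each $Z_r$ and substitute the interpolation estimate for $h_s$:
\[
\E \|Z_r\|^p \leq C_{p,D}^p\, C_{S,T}^p\, K^p\, \E \Bigl(\int_0^r (r-s)^{-2\alpha} s^{2\nu} \|g_s\|^2_{\gamma(H,X_{\nu,\infty})} \ud s\Bigr)^{p/2}.
\]
The inner integral is (up to a constant) the fractional integral $I_{1-2\alpha} f$ of $f(s) := s^{2\nu} \|g_s\|^2_{\gamma(H, X_{\nu,\infty})}$. The Hardy--Littlewood--Sobolev inequality $\|I_{1-2\alpha} f\|_{L^{1/(2\alpha)}(0,T)} \leq C_\alpha \|f\|_{L^1(0,T)}$ applied pathwise, with $q := 1/\alpha > 2$, yields $\|f\|_{L^1(0,T)} \leq T^{2\nu} \|g\|_{L^2(0,T;\gamma(H,X_{\nu,\infty}))}^2$, and a Minkowski/Fubini exchange (splitting into the cases $p \geq q$ and $p < q$) closes the estimate. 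Continuity of $v_t$ is then immediate from its representation as a Bochner integral with an integrable kernel in $r$, while continuity of $S(t) M_t$ follows from strong continuity of $S$ together with pathwise continuity of $M_t$.

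The main obstacle is tracking the powers of $C_{S,T}$ that enter in each step (the interpolation estimate for $h$, the $S(r-s)$ inside $Z_r$, and the $S(t-r)$ in the outer integral) and verifying that the combined power can be absorbed so as to match the linear $C_{S,T}^p$ stated in the conclusion, possibly by hiding additional $C_{S,T}$-dependence in the constant $C_{p,D,T,\nu}$ (which is explicitly allowed by the statement since $C_{p,D,T,\nu}$ has no claimed $S$-independence). A secondary technical point is the Minkowski exchange in the H\"older--HLS step for the case $p < q$, requiring a careful choice of $\alpha \in (0, \nu \wedge \tfrac12)$.
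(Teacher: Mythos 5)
Your proof takes a genuinely different route from the paper (which simply splits $S(t-s) = I + (S(t-s)-I)$ and applies the Kolmogorov--Chentsov criterion to the remainder) — but the factorization step contains a gap that cannot be repaired within the stated framework.

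The Hardy--Littlewood--Sobolev estimate you invoke, $\|I_{1-2\alpha}f\|_{L^{1/(2\alpha)}(0,T)}\lesssim\|f\|_{L^1(0,T)}$, is false: the Riesz potential $I_{1-2\alpha}$ does not map $L^1$ into the endpoint space $L^{1/(2\alpha)}$ (even on a bounded interval, an approximate Dirac mass at an interior point is mapped to a function behaving like $|r-r_0|^{-2\alpha}$, which just fails to be in $L^{1/(2\alpha)}$). Only the weak-type bound $L^1\to L^{1/(2\alpha),\infty}$ holds. Moreover, even granting the endpoint strong-type bound, the exponents would still not close: H\"older in the outer factorization integral $\int_0^t(t-r)^{\alpha-1}\|Z_r\|\,\dd r$ requires $\|Z\|_{L^q(0,T;X)}$ with $q$ \emph{strictly} greater than $1/\alpha$ (at $q=1/\alpha$ one faces $\int_0^t(t-r)^{-1}\dd r=\infty$), which forces $I_{1-2\alpha}f\in L^{q/2}$ with $q/2>1/(2\alpha)$ — strictly beyond what even weak-type HLS can deliver. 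The $s^{2\nu}$ weight does not help: it is bounded near $s=r$, where the kernel $(r-s)^{-2\alpha}$ is singular, and so it cannot upgrade the integrability in $r$. In short, the factorization method intrinsically requires $L^q$-time-integrability of the integrand for some $q>2$ (as in Proposition~\ref{prop:fact}), and the extra \emph{space} regularity $X_{\nu,\infty}$ cannot be traded for extra \emph{time} integrability without further smoothing hypotheses (e.g.\ analyticity of $S$). The paper avoids this entirely by establishing $\E\|u_t-u_s\|^p\lesssim|t-s|^{\nu p}$ for the remainder $u_t=\int_0^t(S(t-s)-I)g_s\,\dd W_s$ via Proposition~\ref{prop:Seid} and the interpolation bound $\|S(t)-S(s)\|_{\calL(X_{\nu,\infty},X)}\lesssim|t-s|^\nu$, and then applying Kolmogorov--Chentsov; no time-integrability beyond $L^2$ is needed. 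A secondary remark: your decomposition $S(t-s)=S(t)-S(t-s)[S(s)-I]$ places the gain factor $s^\nu$ near $s=0$, whereas the paper's decomposition places it at $s=t$, which is what makes the Kolmogorov--Chentsov increment estimate come out cleanly.
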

\begin{proof}
By localisation and Lenglart's inequality, it suffices to prove the continuity and maximal estimate for $p>\frac{1}{2\nu}$.

We have
\[\int_0^t S(t-r)g_r\ud W_r = \int_0^t (S(t-r)-I)g_r\ud W_r + \int_0^t g_r\ud W_r =:u_t+v_t.\]
By Proposition \ref{prop:Seid}, $v$ has a continuous version satisfying the required maximal estimate, so it remains to prove the same for $u$. For this we will use the Kolmogorov--Chentsov continuity criterion \cite[Theorem I.2.1]{RY}.

For $0\leq s\leq t\leq T$ we have
\begin{align*}
\|S(t) -S(s)\|_{\calL(X,X)} \leq 2 C_{S,T}, \qquad  \|S(t) -S(s)\|_{\calL(\Dom(A),X)} \leq C_{S,T} |t-s|.
\end{align*}
Therefore, by interpolation,
\begin{equation}\label{eq:normStSs}
\begin{aligned}
\|S(t)- S(s)\|_{\calL(X_{\nu,\infty},X)} & \leq \|S(t)- S(s)\|_{\calL(X_{\nu,\infty},X)}\\ & \leq 2C_{S,T} |t-s|^{\nu}.
\end{aligned}
\end{equation}
Next, for $0\leq s\leq t\leq T$ we have
\begin{align*}
u_t-u_s = \int_0^s (S(t-r)-S(s-r))g_r \ud W_r + \int_s^t (S(t-r)-I)g_r \ud W_r.
\end{align*}
Taking $L^p(\Omega;X)$-norms, from Proposition \ref{prop:Seid} we obtain
\begin{align*}
\E\Big\|\int_0^s (S(t-r)-S(s-r))g_r \ud W_r\Big\|^p&\leq
C_{p,D}^p \E\|(S(t-\cdot)-S(s-\cdot))g\|_{L^2(0,s;\gamma(H,X))}^p
\\ & \leq (K |t-s|^{\nu})^p \E\|g\|_{L^2(0,T;\gamma(H,X_{\nu,\infty}))}^p,
\end{align*}
where $K = 2C_{S,T} C_{p,D} $. Similarly,
\begin{align*}
\E\Big\|\int_s^t (S(t-r)-I)g_r \ud W_r\Big\|^p&\leq
C_{p,D}^p \E\|(S(t-\cdot)-I) g\|_{L^2(s,t;\gamma(H,X))}^p
\\ & \leq (K|t-s|^{\nu})^p \E\|g\|_{L^2(0,T;\gamma(H,X_{\nu,\infty}))}^p.
\end{align*}
It follows that
\begin{align*}
\E\|u_t-u_s\|^p\leq K^p |t-s|^{\nu p} \E\|g\|_{L^2(0,T;\gamma(H,X_{\nu,\infty}))}^p.
\end{align*}
Now we will use the assumption $p>\frac{1}{2\nu}$, which allows us to apply the Kolmogorov--Chentsov continuity criterion. It implies that for $0<\delta<2\nu-\frac1p$ the process $u$ has a ($\delta$-H\"older) continuous version which satisfies
\begin{align*}
\E \|u\|_{C^{\delta}([0,T];X)}^p\leq
K^p C_{p,T,\delta,\nu}^p\E\|g\|_{L^2(0,T;\gamma(H,X_{\nu,\infty}))}^p.
\end{align*}
Together with the bound $\sup_{t\in [0,T]}\|u(t)\|\leq T^{\delta}\|u\|_{C^{\delta}([0,T];X)}$ and the estimate for $v$, this implies the maximal inequality in the statement of of the proposition.
\end{proof}

\begin{remark}
 The same result holds if we replace $X_{\nu,\infty}$ by any Banach space which continuously embeds into $X_{\nu,\infty}$. In particular this implies to complex interpolation spaces and fractional domain spaces.
\end{remark}

\subsection{Martingales as integrators: Hilbert spaces}\label{subsec:Hilbert}

In the remainder of this section we consider stochastic convolutions driven by an $L^2$-martingale $(M_t)_{t\in [0,T]}$ with values in a separable Hilbert space $H$. For details on stochastic integration in this setting we refer to \cite{MetPel80,Met} and the summary in \cite{HauSei2}. We will use a couple of notions from the theory of stochastic processes that have not been introduced in Section \ref{sec:prelim} but are otherwise completely standard; see for instance \cite{Kal, RY}.

In the present subsection we also let $X$ be a Hilbert space; the case where $X$ is a $2$-smooth Banach space is discussed in the next subsection. By a standard argument involving the essential separability of the ranges of strongly measurable functions, there is no loss of generality in assuming $X$ to be separable. This is relevant as we cite some results from the literature which are stated for separable spaces.

For details on the concepts we introduce below we refer to \cite[Chapter 4]{Met}, where proofs of the various claims made below can be found. We denote by $\lb M_t\rb_{t\in [0,T]}$ the predictable quadratic variation of $M$, and by $\lb\!\lb M_t\rb\!\rb_{t\in [0,T]}$ the predictable tensor quadratic variation of $M$ taking values in the space of trace class operators $\calL_1(H)$.
The covariance process $(Q_{M,t})_{t\in [0,T]}$ is defined as the Radon--Nikod\'ym derivative $Q_M = \frac{{\rm d}\lb\!\lb M\rb\!\rb}{{\rm d}\lb M\rb}$ (note that $\calL_1(H)$ has the Radon--Nikod\'ym property: this space is separable and is canonically isometric to the  dual of the space of compact operators on $H$; see \cite[Theorems 1.3.21, D.2.6]{HNVW16}). Then $Q_M$ is positive and trace class with ${\rm tr}(Q_M)= 1$ almost everywhere on $[0,T]\times \Omega$. For processes $g:[0,T]\times\Omega\to \calL(H,X)$ which are predictable in the strong operator topology, one has
\begin{align}\label{eq:L2integral}
\E\Big\|\int_0^T g_t \ud M_t\Big\|^2 = \E \int_0^T \|g_t Q_{M,t}^{1/2}\|_{\calL_2(H,X)}^2 \ud \lb M\rb_t,
\end{align}
whenever the right-hand side of \eqref{eq:L2integral} is finite.
Moreover, the predictable quadratic variation is given by
\begin{align}\label{eq:quadraticvar}
\Big<\int_0^\cdot g_s \ud M_s\Big>_t = \int_0^t \|g_s Q_{M,s}^{1/2}\|_{\calL_2(H,X)}^2 \ud \lb M\rb_s.
\end{align}
In these identities, $\calL_2(H,X)$ denotes the space of Hilbert--Schmidt
operators from $H$ to $X$.

The following theorem shows that the main result of \cite{Kot82} also holds with a strong type estimate instead of a weak estimate. A similar result was obtained in \cite{Kot84} under additional assumptions on the evolution family $(S(t,s))_{0\leq s\leq t\leq T}$. The result also covers the Poisson case; this can be seen in the same way as in \cite[Section 3]{HauSei2}.

\begin{theorem}\label{thm:generalmart}
Let $(S(t,s))_{0\leq s\leq t\leq T}$ be a $C_0$-evolution family of contractions on a Hilbert space $X$ and let $M$ be a continuous (respectively, c\`adl\`ag) local $L^2$-martingale with values in $H$. Let
$g:[0,T]\times\Omega\to \calL(H,X)$ be a process such that $g(h)$ is predictable for all $h\in H$ and $$\int_0^T \|g_t Q_{M,t}^{1/2}\|_{\calL_2(H,X)}^2 \ud \lb M\rb_t<\infty \ \ \hbox{almost surely.}$$
Then the process $(\int_0^t S(t,s)g_s\ud M_s)_{t\in [0,T]}$ has a continuous (respectively, c\`adl\`ag) modification. Moreover, if $0<p\le 2$, then
\[\E\sup_{t\in [0,T]}\Big\n \int_0^t S(t,s)g_s\ud M_s\Big\n^p\leq C^p_p \E \Big(\int_0^T \|g_t Q_{M,t}^{1/2}\|_{\calL_2(H,X)}^2 \ud \lb M\rb_t\Big)^{p/2},\]
where $C_p$ is a constant depending only on $p$. For $p=2$ the inequality holds with $C=300$.
\end{theorem}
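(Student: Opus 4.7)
The plan is to mimic the discretisation scheme used in the proof of Theorem~\ref{thm:contractionS-new}, but with the martingale maximal inequality \eqref{eq:L2integral}--\eqref{eq:quadraticvar} replacing the Brownian It\^o isometry, and then to extrapolate from $p=2$ to $0<p<2$ via Lenglart's inequality. By standard localisation I may assume $M$ is a true $L^2$-martingale and $\E A_T<\infty$, where $A_t:=\int_0^t\|g_s Q_{M,s}^{1/2}\|_{\calL_2(H,X)}^2\ud\lb M\rb_s$.

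First I would treat $p=2$ by Pinelis's discrete inequality. Fix a partition $\pi_n=\{0=t_0<\cdots<t_{N_n}=T\}$ with mesh tending to $0$, set $\sigma_n(s):=t_{j-1}$ for $s\in[t_{j-1},t_j)$, $S_n(t,s):=S(t,\sigma_n(s))$, and $v^{(n)}_t:=\int_0^tS_n(t,s)g_s\ud M_s$. Since $S_n(t,\cdot)$ is piecewise constant, $v^{(n)}$ is continuous (respectively c\`adl\`ag) in $t$, and at partition points the semigroup law gives $f_j:=v^{(n)}_{t_j}=S(t_j,t_{j-1})f_{j-1}+dH_j$ with $dH_j:=S(t_j,t_{j-1})\int_{t_{j-1}}^{t_j}g_s\ud M_s$, a martingale difference. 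Applying Theorem~\ref{thm:Pinelis} with $D=1$ (the non-symmetric case, since martingale differences need not be conditionally symmetric) and $p=2$ gives
\[
\|(v^{(n)})^\star\|_2 \le 60\|dH^\star\|_2 + 40\sqrt{2}\,\|s(H)\|_2.
\]
By contractivity of $S$ and \eqref{eq:L2integral}, both $\E\|dH^\star\|^2\le\E\sum_j\|dH_j\|^2$ and $\E s(H)^2=\E\sum_j\E_{j-1}\|dH_j\|^2$ are bounded by $\E A_T$, yielding $\|(v^{(n)})^\star\|_2\le C\|A_T^{1/2}\|_2$ with $C\le 60+40\sqrt{2}<300$.

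Next I would pass to the limit $n\to\infty$. Applying the same estimate to the differences $v^{(n)}-v^{(m)}$, which equal stochastic convolutions against $(S_n(t,s)-S_m(t,s))g_s$, strong continuity of $S$ and dominated convergence show $(v^{(n)})_{n\geq 1}$ is Cauchy in $L^2(\Om;D([0,T];X))$ (or $C([0,T];X)$ when $M$ is continuous); the limit is the required modification $u$ of $t\mapsto\int_0^t S(t,s)g_s\ud M_s$, and it inherits the $L^2$ maximal bound. For $0<p<2$, I pass to $L^p$ via stopping times. For any stopping time $\tau$, set $\tilde u_t:=\int_0^{t\wedge\tau}S(t,s)g_s\ud M_s$; for $t>\tau$ one has $\tilde u_t=S(t,\tau)u_\tau$, so by contractivity $\sup_t\|\tilde u_t\|=\sup_{t\le\tau}\|u_t\|$. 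The $L^2$ bound applied to $\tilde u$ therefore gives $\E\sup_{t\le\tau}\|u_t\|^2\le C^2\E A_\tau$. Since $A$ is non-decreasing and predictable, Lenglart's inequality \cite[Proposition IV.4.7]{RY} applied with exponent $p/2<1$ to the adapted process $X_t:=\sup_{s\le t}\|u_s\|^2$ dominated by $C^2 A$ yields $\E\sup_t\|u_t\|^p\le C_p^p\E A_T^{p/2}$.

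The main obstacle is that, unlike in the Brownian case, martingale increments need not be conditionally symmetric, forcing the use of the non-symmetric bound in Theorem~\ref{thm:Pinelis} with its extra $\|dH^\star\|_p$ term. At $p=2$ Doob's inequality plus \eqref{eq:L2integral} absorbs this term harmlessly, and the Lenglart-stopping-time detour handles $0<p<2$ without ever needing to control $\|dH^\star\|_p$ for $p\ne 2$; the restriction $p\le 2$ in the statement reflects exactly this, since for general $L^2$-martingales the $\|dH^\star\|_p$ term is no longer benign when $p>2$.
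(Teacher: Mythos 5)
Your proposal is correct and follows essentially the same route as the paper: localise and reduce to $p=2$ via Lenglart, discretise the evolution family so that the partition-point values satisfy the recursion of Theorem~\ref{thm:Pinelis}, apply the non-symmetric case of that theorem with $D=1$, and pass to the limit via a Cauchy argument in $L^2(\Omega;C([0,T];X))$ (resp.\ the c\`adl\`ag analogue). The only (harmless) deviation is that you absorb the $\|dH^\star\|_2$ term by the crude bound $\E\max_j\|dH_j\|^2\le\E\sum_j\|dH_j\|^2=\E\, s(H)^2$ rather than via Doob's inequality as in the paper, which in fact yields a slightly better constant, and your diagnosis of why $p\le 2$ is required matches Remark~\ref{rem:HScase}.
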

This result can be extended to a larger class of processes $g$ by a density argument, but the description of the space is quite technical. The interested reader is referred to \cite{HauSei2, Met}.

\begin{proof}
By Lenglart's theorem and a localisation argument as in Theorem \ref{thm:contractionS-new} it suffices to consider $p=2$. Moreover, by localisation we may assume that $M$ is a continuous (respectively, c\`adl\`ag) $L^2$-martingale. By approximation it furthermore suffices to consider adapted step processes $g$. We will focus on the continuous case, the c\`adl\`ag case being similar. Only the required changes in the proof of Theorem \ref{thm:contractionS-new} will be indicated.

First of all, $\|g\|_{L^2(0,T;\gamma(H,X))}$ must be replaced by $\int_0^T \|g_t Q_{M,t}^{1/2}\|_{\calL_2(H,X)}^2 \ud \lb M\rb_t$ throughout. With this adjustment, up to \eqref{eq:Pinelisvariant} the proof is {\em verbatim} the same. By Theorem \ref{thm:Pinelis} with $p=2$ we find that
\begin{align*}
\n f^\star\n_2 & \le 60 \n dG\ss\n_2 + 40\sqrt{2} \n s(G)\n_2.
\end{align*}
Noting that $\n dG\ss\n_2\leq 2\|G\ss\|_2 \leq 4\|G\|_2 = 4 \n s(G)\n_2$ by Doob's maximal inequality
and combining the above with \eqref{eq:L2integral} and the bound $\|K(t_j, s)\|\leq 1$, we obtain
\begin{align*}
\n f^\star\n_2^2 \leq C^2\n s(G)\n_2^2 &= C^2\sum_{j=1}^m \E \|dG_j\|^2 \\ &= C^2 \sum_{j=1}^m \E \int_{t_{j-1}}^{t_j}\|K(t_j,s) g(s)Q_M^{1/2}\|_{\calL_2(H,X)}^2 \ud \lb M\rb_s \\&  \leq C^2\E \int_{0}^{T}\|g(s)Q_M^{1/2}\|_{\calL_2(H,X)}^2 \ud \lb M\rb_s,
\end{align*}
where $C = 240+40\sqrt{2} < 300$.
\end{proof}

\begin{remark}\label{rem:HScase}
Let us explain how to extend Theorem \ref{thm:generalmart} to arbitrary $2\le p< \infty$ in the case of continuous local martingales. In particular this extends \cite[(1.13)]{HauSei2} to the case of evolution families.

If $M$ is a continuous local martingale with values in $H$, then Theorem \ref{thm:generalmart} extends to exponents $2\le p<\infty$ with $C_p = 40\sqrt{p}$. As an immediate consequence, Corollary \ref{cor:expontail} holds with $W$ replaced by $M$ and with
\[\sigma^2 = 1600 e \|gQ_{M}^{1/2}\|_{L^\infty(\Omega;L^2(0,T;\calL_2(H,X)))}^2 \]
The proof is similar to those of Theorems \ref{thm:contractionS-new} and \ref{thm:generalmart}, but some modifications are required which we sketch below.

By a stopping time argument we may assume that $\|M\|$ and $\lb M\rb$ are uniformly bounded on $[0,T]\times \Omega$. By approximation it can be assumed that $g$ is an adapted finite rank step process. Then up to \eqref{eq:Pinelisvariant} the proof is the same. Theorem \ref{thm:Pinelis} gives that
\begin{align*}
\n f^\star\n_p & \le 30p \n dG\ss\n_p + 40\sqrt{p} \n s(G)\n_p.
\end{align*}
Moreover the following extension of \eqref{eq:L2integral} holds:
\[\E\sup_{t\in [0,T]}\Big\|\int_0^t g_t \ud M_t\Big\|^p \eqsim_p \E \Big(\int_0^T \|g_t Q_{M,t}^{1/2}\|_{\calL_2(H,X)}^2 \ud \lb M\rb_t\Big)^{p/2}.\]
Since $g$ is uniformly bounded it follows that
\begin{align*}
\|dG\ss\|^p_p & \leq \sum_{j=1}^m \E \|dG_j\|^p
\\ & \leq C_g^p \sum_{j=1}^m \E |\lb M\rb_{t_j} - \lb M\rb_{t_{j-1}}|^{{p}/{2}}
\leq C_g^p \E (\sup_{j}|\lb M\rb_{t_j} - \lb M\rb_{t_{j-1}}|^{\frac{p-2}{2}}) |\lb M_T\rb|.
\end{align*}
By dominated convergence the right-hand side tends to zero as the mesh size tends to $0$. The result follows once we have shown that
\[s(G)^2 \to \int_0^T \int_0^T \|g_t Q_{M,t}^{1/2}\|_{\calL_2(H,X)}^2 \ud \lb M\rb_t\]
with convergence in $L^{p/2}(\Omega)$. If we replace $s(G)^2$ by $\widetilde{s}(G)^2 := \sum_{j=1}^m \|dG_j\|^2$ this follows from \eqref{eq:quadraticvar} (as explained in \cite[Section 4]{BurkStoch}, the scalar case considered in \cite{Doleans} extends to the Hilbert space). The proof will be completed by showing that
$$\E|\widetilde{s}(G)^2 - s(G)^2|^{q}\to 0$$
for any $q\in [1, \infty)$.  Without loss of generality we may take $q\geq 2$ and since $g$ is an adapted finite rank step process.
To prove the convergence in $L^q(\Omega)$ we note that by the scalar case of Theorem \ref{thm:Pinelis}, applied with $V_j = I$ and martingale differences $dL_j = \|dG_j\|^2 - \E_{j-1}(\|dG_j\|^2)$,  for all $2\le q<\infty$  we have
\begin{align*}
\|\widetilde{s}(G)^2 - s(G)^2\|_q & \leq 30 q\|dL\ss\|_q + 40\sqrt{q}\|s(L)\|_q
\\ & \leq 60q \|d G\ss\|_{2q}^2 + 80\sqrt{q} \Big\|\Big(\sum_{j=1}^m \E_{j-1} \|dG_j\|^4\Big)^{1/2}\Big\|_q.
\end{align*}
We have already seen that the first term tends to $0$ as the mesh size tends to zero. For the second term we use \cite[Proposition 3.2.8]{HNVW16} and  H\"older's inequality to find that
\begin{align*}
\Big\|\Big(\sum_{j=1}^m \E_{j-1} \|dG_j\|^4\Big)^{1/2}\Big\|_q
& \leq \frac{q^2}{4} \Big\|\Big(\sum_{j=1}^m \|dG_j\|^4\Big)^{1/2}\Big\|_q
\leq  \frac{q^2}{4}\|dG\ss\|_{2q} \|s(G)\|_{2q} \to 0
\end{align*}
as $\text{mesh}(\pi)\to 0$.
\end{remark}

\subsection{Martingales as integrators: $2$-smooth UMD Banach spaces}

As before we let $H$ be a separable Hilbert space and turn to the case where $X$ is a $(2,D)$-smooth Banach space with the UMD property. Discussions of UMD spaces can be found in \cite{HNVW16, Pis-Mart}. Rather than introducing this property here, we content ourselves by mentioning that examples of Banach spaces with this property include Hilbert spaces, $L^p$-spaces with $1<p<\infty$ and most classical function spaces constructed from these.
We will prove an extension of the maximal estimate of the preceding subsection to this setting by using some results from \cite{yaroslavtsev2018burkholder}.
To avoid technicalities with non-predictable quadratic variations we only consider {\em continuous} local martingales with values in $H$. In that case the quadratic variation considered in \cite{yaroslavtsev2018burkholder} coincides with the one of Subsection \ref{subsec:Hilbert} (see \cite[Theorem 20.5]{Met}).

Let $g:[0,T]\times\Omega\to \calL(H,X)$ be a process such that $g(h)$ is predictable for all $h\in H$ and
$$\|g_t Q_{M,t}^{1/2}\|_{\gamma(L^2(0,T;H),d\lb M\rb_t;X)}^2<\infty \ \ \hbox{almost surely}.$$
By \cite[Theorem 4.1]{VerYar} (see also \cite[Corollary 7.4 and Remark 7.6]{yaroslavtsev2018burkholder})
these assumptions enable one to construct a stochastic integral $\int_0^t g_s \ud M_s$ which, for all $0<p<\infty$, satisfies the two-sided estimate
\begin{align}\label{eq:L2integralBanach}
\E\sup_{t\in [0,T]}\Big\|\int_0^t g_s \ud M_s\Big\|^p \eqsim_{p,X} \E \Big(\|g_t Q_{M,t}^{1/2}\|_{\gamma(L^2(0,T;H),d\lb M\rb_t;X)}^2\Big)^{p/2}
\end{align}
whenever the expression on the right-hand side is finite. If in addition $X$ has type $2$ (which holds if $X$ is $2$-smooth), then by \cite[Theorem 6.1]{vNWe05}
\begin{align}\label{eq:L2integralBanach2}\|g_t Q_{M,t}^{1/2}\|_{\gamma(L^2(0,T;H),d\lb M\rb_t;X)}^2 \leq \tau_{2,X}^2\int_0^T \|g_t Q_{M,t}^{1/2}\|_{\gamma(H,X)}^2 \ud \lb M\rb_t,
\end{align}
where $\tau_{2,X}$ is the type $2$ constant of $X$. We will consider processes for which the right-hand side is finite almost surely.

\begin{theorem}\label{thm:generalmart2}
Let $X$ be a $(2,D)$-smooth UMD Banach space.
Let $(S(t,s))_{0\leq s\leq t\leq T}$ be a $C_0$-evolution family of contractions on $X$ and let $M$ be a continuous local martingale with values in $H$. Let $g:[0,T]\times\Omega\to \calL(H,X)$ be a process such that $g(h):[0,T]\times \Om\to X$ is predictable for all $h\in H$ and $$\int_0^T \|g_t Q_{M,t}^{1/2}\|_{\gamma(H,X)}^2 \ud \lb M\rb_t<\infty \ \ \hbox{almost surely}.$$ Then the process $(\int_0^t S(t,s)g_s\ud M_s)_{t\in [0,T]}$ has a continuous modification which satisfies, for all $0<p<\infty$,
\[\E\sup_{t\in [0,T]}\Big\n \int_0^t S(t,s)g_s\ud M_s\Big\n^p\leq C_{p,X}^p \E \Big(\int_0^T \|g_t Q_{M,t}^{1/2}\|_{\gamma(H,X)}^2 \ud \lb M\rb_t\Big)^{p/2},\]
where $C_{p,X}$ is a constant depending only on $p$ and $X$.
\end{theorem}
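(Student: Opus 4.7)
The strategy is to adapt the scheme of Theorem \ref{thm:generalmart} and Remark \ref{rem:HScase} to the Banach space setting by using the two-sided estimate \eqref{eq:L2integralBanach} and the type $2$ bound \eqref{eq:L2integralBanach2} in place of the Hilbert space It\^o isometry. First, by Lenglart's inequality and a stopping time localisation mirroring Step 4 of the proof of Theorem \ref{thm:contractionS-new}, one reduces to proving the maximal inequality for exponents $2\le p<\infty$ under the assumption that $\lb M\rb_T$ and $\int_0^T \n g_t Q_{M,t}^{1/2}\n_{\gamma(H,X)}^2 \ud \lb M\rb_t$ are uniformly bounded on $\Om$. By density arguments based on \eqref{eq:L2integralBanach} and \eqref{eq:L2integralBanach2}, it suffices to treat adapted finite rank step processes $g$ of the form \eqref{eq:simple}.

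Following the scheme of Steps 1--3 of the proof of Theorem \ref{thm:contractionS-new}, introduce an evolution family $K(t,s)$ that is piecewise constant in $s$ on the intervals of a partition $\pi\supseteq\{s_l\}$ and satisfies $S(t,r)K(r,s)=K(t,s)$, and set $v_t:=\int_0^t K(t,s)g_s\ud M_s$. For a further refinement $\pi'=\{0=t_0<\cdots<t_m=T\}\supseteq \pi$, define $f_j:=v_{t_j}$, $V_j:=S(t_j,t_{j-1})$ (contractions), and $G_j:=\int_0^{t_j}K(t_j,s)g_s\ud M_s$. Property $(3)$ of evolution families yields the recursion $f_j=V_j f_{j-1}+dG_j$. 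Since $dM_s$ is not in general conditionally symmetric, the non-symmetric version of Theorem \ref{thm:Pinelis} gives
\[
\n f^\star\n_p \;\le\; 30p\, \n dG\ss\n_p + 40 D\sqrt{p}\, \n s(G)\n_p.
\]

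To control $\n s(G)\n_p$, note that after refining $\pi'$ the integrand $K(t_j,\cdot)g_\cdot$ reduces to $T_j:=K(t_j,t_{j-1})g_{t_{j-1}}$ on $[t_{j-1},t_j)$, and $T_j$ is strongly $\F_{t_{j-1}}$-measurable. A conditional form of \eqref{eq:L2integralBanach} together with \eqref{eq:L2integralBanach2} and the left-ideal property of the $\gamma$-norm (using that $K(t_j,t_{j-1})$ is a contraction) gives the almost sure bound
\[
\E_{j-1}\n dG_j\n^2 \;\le\; C_X^2\,\E_{j-1}\!\int_{t_{j-1}}^{t_j}\!\n g_s Q_{M,s}^{1/2}\n^2_{\gamma(H,X)}\,\ud \lb M\rb_s.
\]
Summing over $j$ and invoking the dual Doob (Stein) maximal inequality for predictable projections of positive sequences (applied for $p>2$, with $p=2$ as the trivial case) controls $\n s(G)^2\n_{p/2}$ by $C_{p,X}^2\bigl\|\int_0^T\n g_s Q_{M,s}^{1/2}\n_{\gamma(H,X)}^2\ud\lb M\rb_s\bigr\|_{p/2}$, which is exactly the square of the right-hand side of the theorem. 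The remainder term $\n dG\ss\n_p$ vanishes as $\text{mesh}(\pi')\to 0$ exactly as in Remark \ref{rem:HScase}: for bounded finite rank step $g$ and bounded $\lb M\rb$, one has $\n dG\ss\n_p^p\le \sum_j \E\|dG_j\|^p\le C_g^p\,\E\bigl[\sup_j|\lb M\rb_{t_j}-\lb M\rb_{t_{j-1}}|^{(p-2)/2}\lb M\rb_T\bigr]$, which tends to zero by continuity of $\lb M\rb$ and dominated convergence.

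Finally, one recovers the result for $S$ itself by choosing $K=S_n(t,s):=S(t,\sigma_n(s))$ for dyadic partitions $\sigma_n$, and verifies convergence of $v^{(n)}$ to $u=\int_0^\cdot S(\cdot,s)g_s\ud M_s$ in $L^p(\Om;C([0,T];X))$ via the Cauchy criterion and strong continuity of $S$, exactly as in Step 3 of the proof of Theorem \ref{thm:contractionS-new}; the continuous modification for $p=0$ then follows by standard localisation. The principal technical obstacle I anticipate is establishing a clean conditional form of the two-sided estimate \eqref{eq:L2integralBanach}, since the literature states it only unconditionally, and then combining it with the Stein inequality for the positive sequence $Y_j:=\int_{t_{j-1}}^{t_j}\n g_s Q_{M,s}^{1/2}\n_{\gamma(H,X)}^2\ud\lb M\rb_s$ to produce a partition-independent $L^{p/2}$ bound on $\sum_j\E_{j-1}Y_j$ in terms of $\|\sum_j Y_j\|_{p/2}$.
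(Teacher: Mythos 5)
Your proposal is correct and follows essentially the same route as the paper: decompose $f_j = V_j f_{j-1} + dG_j$, apply the non-symmetric case of Theorem \ref{thm:Pinelis}, bound $\E_{j-1}\|dG_j\|^2$ via a conditional version of the two-sided estimate \eqref{eq:L2integralBanach}, the type-$2$ bound \eqref{eq:L2integralBanach2}, and the ideal property of $\gamma(H,X)$, and then control $s(G)$ by the dual Doob inequality (\cite[Proposition 3.2.8]{HNVW16}) while sending $dG^\star$ to zero as in Remark \ref{rem:HScase}. The only notable difference is that you explicitly flag the need for a conditional form of \eqref{eq:L2integralBanach} as a technical obstacle, whereas the paper dispatches it as ``a standard argument''; otherwise the reasoning, including the resulting $O(p)$ growth of the constant $C_{p,X}$, coincides.
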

\begin{proof}
We argue as in Theorem \ref{thm:generalmart} and Remark \ref{rem:HScase}. Since we may assume that $g$ takes values in a finite dimensional subspace of $X$, as in Remark \ref{rem:HScase} it follows that $\|dG^\star\|_p\to 0$ as the mesh$(\pi)\to 0$. It remains to estimate $s(G)$.
By a standard argument \eqref{eq:L2integralBanach} and \eqref{eq:L2integralBanach2} imply
\[\E_{j-1} \|dG_j\|^2 \leq C_{X}^2 \E_{j-1} \Big(\int_{t_{j-1}}^{t_j} \|g_t Q_{M,t}^{1/2}\|_{\gamma(H,X)}^2\ud \lb M\rb_t\Big)=:C_{X}^2\E_{j-1} (\xi_j) ,\]
where $C_X$ is a constant only depending on $X$.
Therefore, by \cite[Proposition 3.2.8]{HNVW16},
\begin{align*}
\|s(G)\|_p^p \leq C_{X}^p \E\Big(\sum_{j=1}^m \E_{j-1} (\xi_j)\Big)^{p/2}\!\!
& \leq (p/2)^{p/2} C_{X}^p \E\Big(\sum_{j=1}^m \xi_j\Big)^{p/2}
\\ & = (p/2)^{p/2} C_{X}^p \E\Big(\int_{0}^{T} \|g_t Q_{M,t}^{1/2}\|_{\gamma(H,X)}^2 \ud \lb M\rb_t \Big)^{p/2}.
\end{align*}
The proof can now be completed as before.

Observe that this method gives the result with $C_{p,X} = \frac{40}{\sqrt{2}}p C_X$ for $p\geq 2$, which is linear in $p$ as $p\to \infty$; this contrasts with the $O(\sqrt{p})$ growth obtained in all other places in the paper.
\end{proof}

The infinite dimensional version of the Dambis--Dubins--Schwarz theorem of \cite[Theorem 4.9]{VerYar} suggests that the correct order of the constant in Theorem \ref{thm:generalmart2} is $O(\sqrt{p})$.

We expect that a large portion of Theorem \ref{thm:generalmart2} extends to the setting of (non necessarily continuous) local martingales if one replaces the predictable quadratic variation $\lb M\rb$ by the process $[M]$ as defined in \cite[Theorem 20.5]{Met}. However, usually it is preferred to work with a predictable quadratic variation. An alternative substitute for predictability has been recently developed in \cite{Dirk14} in the Poisson case and in \cite{DiYar, yaroslavtsev2019local} for general local martingales, but the norms are much more complicated to work with. It would be interesting to see if one can combine our techniques with the estimates in  \cite{Dirk14,DiYar} for $X = L^q$ with $2\le q<  \infty$, or in \cite{yaroslavtsev2019local} for more general Banach spaces $X$.

\section{Applications to time discretisation}\label{sec:numerical}

In this section we will apply our abstract results to prove stability of certain numerical approximations of stochastic evolution equations with additive noise of the form
\begin{equation}\label{eq:SEE}
    \begin{cases}
        \ud u_t &= A(t)u_t\ud t + g_t \ud W_t, \qquad t\in [0,T], \\
        u_0 &= 0.
    \end{cases}
\end{equation}
This setting covers to both parabolic and hyperbolic time-dependent SPDEs; the latter class includes the stochastic wave equation and the Schr\"odinger equation.
To solve \eqref{eq:SEE} numerically one typically uses discretisation in time and space \cite{JenKlo, Lord}. Here we will only consider time discretisation, leaving space-time discretisation and the extension to semi-linear equations with multiplicative noise for a future publication. In that respect the results presented here serve as a proof-of-principle only.
We mainly focus on the splitting scheme and the implicit Euler scheme, although the method is robust and can be applied to other schemes as well.

In what follows, for $n=1,2,\dots$ we set $t_j^{(n)} := jT/n$ and consider the partition
$$\pi^{(n)} := \{t_j^{(n)}: j=0,\ldots, n\}$$ as a discretision of the interval $[0,T]$.
We fix a process $g\in L_{\bF}^0(\Omega;L^2(0,T;\gamma(H,X)))$ and consider the continuous martingale

$$ M_t := \int_0^t g_s \ud W_s, \quad t\in [0,T].$$
For $j=0,\dots, n$ we set
\begin{align}\label{eq:defMg}
 d_j^{(n)} M := M_{t_{j}^{(n)}} - M_{t_{j-1}^{(n)}} = \int_{t_{j-1}^{(n)}}^{t_{j}^{(n)}} g_s \ud W_s.
 \end{align}
In the presence of a $C_0$-evolution family $(S(t,s))_{0\leq s\leq t\leq T}$ we set
$$u_t:= \int_0^t S(t,s) g_s\ud W_s, \quad t\in [0,T].$$
This covers the special case of $C_0$-semigroups by letting $S(t,s) =S(t-s)$.

\subsection{The splitting method}
Our first result gives stability of a time discretisation scheme for the stochastic convolution process involving a $C_0$-evolution family of contractions called the {\em splitting method} (also called the {\em exponential Euler method}). This scheme has already been employed in the proof of Theorem \ref{thm:contractionS-new}. An extension to random evolution families is discussed in Remark \ref{rem:splitting}.

\begin{theorem}[Uniform convergence of the splitting method]\label{thm:splitting}
Let $(S(t,s))_{0\leq s\leq t\leq T}$ be a $C_0$-evolution family of contractions on a  $(2,D)$-smooth Banach space $X$. Let $g\in L_{\bF}^p(\Omega;L^2(0,T;\gamma(H,X)))$ with $0< p<\infty$.
Define, for $n\ge 1$,
\begin{equation*}
\begin{cases}
u_0^{(n)} & :=0, \\
u_j^{(n)} & := S(t_{j}^{(n)}, t_{j-1}^{(n)}) ( u_{j-1}^{(n)} + d_j^{(n)} M), \quad j= 1,\dots,n,
\end{cases}
\end{equation*}
where $d_j^{(n)} M$ is given by \eqref{eq:defMg}.
Then for all $n\ge 1$ we have
\begin{align}\label{eq:estsplitting1}\E\sup_{j=0,\ldots,n}\|u_{t_j^{(n)}} - u_j^{(n)}\|^p\leq C_{p,D}^p \E\|s\mapsto (S(s,\sigma_n(s)) - I)  g_s\|_{L^2(0,T;\gamma(H,X))}^p,
\end{align}
where $\sigma_n(s)= t_{j-1}^{(n)}$ for $s\in [t_{j-1}^{(n)}, t_j^{(n)})$.
In particular, \begin{align}\label{eq:estsplitting2}\limn \E\sup_{j=0,\ldots,n}\|u_{t_j^{(n)}} - u_j^{(n)}\|^p = 0.
               \end{align}
For $2\le p<\infty$ the estimate \eqref{eq:estsplitting1} holds with $C_{p,D} = 10D\sqrt{p}$.
\end{theorem}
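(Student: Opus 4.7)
My plan is to reduce \eqref{eq:estsplitting1} directly to Theorem \ref{thm:contractionS-new} applied to a suitably modified integrand, and then obtain \eqref{eq:estsplitting2} by dominated convergence in $\gamma(H, X)$.

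First I would unfold the recursion. A straightforward induction on $j$, pulling each deterministic operator $S(t_j^{(n)}, t_{k-1}^{(n)})$ through the stochastic integral $d_k^{(n)} M = \int_{t_{k-1}^{(n)}}^{t_k^{(n)}} g_s \ud W_s$, yields
\[
u_j^{(n)} = \sum_{k=1}^{j} S(t_j^{(n)}, t_{k-1}^{(n)}) d_k^{(n)} M = \int_0^{t_j^{(n)}} S(t_j^{(n)}, \sigma_n(s)) g_s \ud W_s.
\]
The evolution property $S(t_j^{(n)}, \sigma_n(s)) = S(t_j^{(n)}, s) S(s, \sigma_n(s))$, valid for $\sigma_n(s) \leq s \leq t_j^{(n)}$, then allows me to write
\[
u_{t_j^{(n)}} - u_j^{(n)} = \int_0^{t_j^{(n)}} S(t_j^{(n)}, s) \tilde g_s^{(n)} \ud W_s, \qquad \tilde g_s^{(n)} := (I - S(s, \sigma_n(s))) g_s.
\]
The process $\tilde g^{(n)}$ is progressively measurable with values in $\gamma(H, X)$, and the ideal property of $\gamma(H, X)$ gives the pointwise bound $\n \tilde g_s^{(n)}\n_{\gamma(H,X)} \leq 2 \n g_s\n_{\gamma(H,X)}$, placing $\tilde g^{(n)}$ in $L^p_{\bF}(\Omega; L^2(0, T; \gamma(H, X)))$. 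Bounding $\sup_j \n u_{t_j^{(n)}} - u_j^{(n)}\n$ above by $\sup_{t \in [0, T]} \n \int_0^t S(t, s) \tilde g_s^{(n)} \ud W_s\n$ and invoking Theorem \ref{thm:contractionS-new} applied to $\tilde g^{(n)}$ gives \eqref{eq:estsplitting1} with $C_{p, D} = 10 D \sqrt{p}$ for $p \geq 2$.

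For \eqref{eq:estsplitting2} I would pass to the limit on the right-hand side of \eqref{eq:estsplitting1} by dominated convergence. Strong continuity of the evolution family gives $S(s, \sigma_n(s)) x \to x$ for every $x \in X$ and every $s \in [0, T]$. Combined with the uniform bound $\n S(s, \sigma_n(s))\n \leq 1$ and finite-rank approximation of $g_s(\omega)$ in $\gamma(H, X)$, this yields $\n \tilde g_s^{(n)}\n_{\gamma(H, X)} \to 0$ pointwise in $(s, \omega)$. Two nested applications of dominated convergence --- first on $(0, T)$ with dominant $4 \n g_s\n_{\gamma(H,X)}^2$, then on $\Omega$ with dominant $2^p \n g\n_{L^2(0, T; \gamma(H, X))}^p$ --- give $\E \n \tilde g^{(n)}\n_{L^2(0, T; \gamma(H, X))}^p \to 0$, which combined with \eqref{eq:estsplitting1} yields \eqref{eq:estsplitting2}. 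The only mildly technical point --- not a real obstacle once Theorem \ref{thm:contractionS-new} is in hand --- is that strong convergence in $X$ passes to $\gamma(H, X)$-convergence when composed with $g_s$; this is a standard consequence of the $\gamma$-ideal property and finite-rank density in $\gamma(H, X)$.
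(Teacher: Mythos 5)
Your argument reproduces the paper's proof almost verbatim: unfolding the recursion to $u_j^{(n)}=\int_0^{t_j^{(n)}}S(t_j^{(n)},\sigma_n(s))g_s\ud W_s$, factoring through the evolution property to obtain $u_{t_j^{(n)}}-u_j^{(n)}=\int_0^{t_j^{(n)}}S(t_j^{(n)},s)(I-S(s,\sigma_n(s)))g_s\ud W_s$, bounding the finite supremum by the continuous one and invoking Theorem~\ref{thm:contractionS-new}, and finally using $\gamma$-dominated convergence for \eqref{eq:estsplitting2}. The only cosmetic difference is that the paper cites \cite[Theorem~9.1.14]{HNVW17} for the pointwise $\gamma(H,X)$-convergence where you spell out the ideal-property argument; the substance is identical and correct.
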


The process $u$ has a continuous modification by Theorem \ref{thm:contractionS-new}.
We will not need this modification in the proof, because the suprema in \eqref{eq:estsplitting1} and \eqref{eq:estsplitting2} are taken with respect to finite index sets. This remark applies to all results
in this subsection and the next (in Theorem \ref{thm:generalapprox1} the existence of the continuous modification follows from  Proposition \ref{prop:maximalineqspacereg}).

\begin{proof}
To simplify notation we fix $n\geq 1$ and write $t_j := t_j^{(n)}$, $v_j:= u_j^{(n)}$, and $d_j M:= d_j^{(n)}M$.
By induction one checks that $v_0=0$ and
\begin{align*}
v_{k} =  \sum_{j=1}^k S(t_k, t_{j-1}) d_jM, \qquad k=1,\dots,n.
\end{align*}
Therefore,
\begin{align*}
u_{t_k} - v_{k} &= \sum_{j=1}^k \int_{t_{j-1}}^{t_j}  (S(t_k,s) - S(t_k, t_{j-1})) g_s \ud W_s
\\ & = \sum_{j=1}^k \int_{t_{j-1}}^{t_j}  S(t_k, s) (I - S(s,t_{j-1})) g_s \ud W_s
\\ & = \sum_{j=1}^k \int_{t_{j-1}}^{t_j}  S(t_k, s) (I - S(s,\sigma_n(s)) g_s \ud W_s
\\ & = \int_{0}^{t_k}  S(t_k,s) (I - S(s,\sigma_n(s)) g_s \ud W_s
\end{align*} and hence, by Theorem \ref{thm:contractionS-new},
\begin{align*}
\E\sup_{j=0,\dots,n}\|u_{t_j} - v_{j}\|^p & \leq \E \sup_{t\in [0,T]} \Big\|\int_{0}^{t}  S(t,s) (I - S(s,\sigma_n(s)) g_s \ud W_s\Big\|^p
\\ & \leq C_{p,D}^p\E\|s\mapsto (I - S(\cdot,\sigma_n(\cdot))g_s\|^p_{L^2(0,T;\gamma(H,X))}.
\end{align*}
The assertion $E_n\to 0$ as $n\to \infty$ follows by dominated convergence in combination with the convergence criterion \cite[Theorem 9.1.14]{HNVW17}.
\end{proof}

In the next corollary we obtain explicit convergence rates for processes $g$ taking values in intermediate spaces. In order to make the statement easy to formulate we only consider the case of semigroup generators.

\begin{corollary}[Uniform convergence of the splitting method with decay rate]\label{cor:splittingdecay}
Let $(S(t))_{t\ge 0}$ be a $C_0$-contraction semigroup on a $(2,D)$-smooth Banach space $X$.
As in the preceding theorem, for $n\ge 1$ let
\begin{equation*}
\begin{cases}
u_0^{(n)} & :=0, \\
u_j^{(n)} & := S(t_{j}^{(n)}- t_{j-1}^{(n)}) (u_{j-1}^{(n)} + d_j^{(n)}M), \quad j= 1,\dots,n,
\end{cases}
\end{equation*}
where $d_j^{(n)} M$ is given by \eqref{eq:defMg}. Let $X_{\nu} := (X,\Dom(A))_{\nu,\infty}$ for $\nu\in (0,1)$ and $X_{1} := \Dom(A)$, where $A$ is the generator of the semigroup. If $g\in L_{\bF}^p(\Omega;L^2(0,T;\gamma(H,X_{\nu})))$ with $0< p<\infty$, then for all $n\ge 1$ we have
\[\E\sup_{j=0,\ldots,n}\|u_{t_j^{(n)}} - u_j^{(n)}\|^p\leq \Bigl(2 C_{p,D} \bigl(\frac{T}{n}\bigr)^{\nu}\Bigr)^p  \|g\|_{L^p(\Omega;L^2(0,T;\gamma(H,X_{\nu})))}^p.\]
For $2\le p<\infty$ the inequality holds with $C_{p,D} = 10D\sqrt{p}$.
\end{corollary}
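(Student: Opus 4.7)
The plan is to reduce to \eqref{eq:estsplitting1} of Theorem \ref{thm:splitting} and then estimate the right-hand side pointwise in $(s,\omega)$ by exploiting the additional regularity encoded by $g_s\in\gamma(H,X_\nu)$.

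First I would invoke Theorem \ref{thm:splitting} (in the special case $S(t,s)=S(t-s)$), which gives
\[\E\sup_{j=0,\ldots,n}\|u_{t_j^{(n)}}-u_j^{(n)}\|^p\leq C_{p,D}^p\, \E\bigl\|s\mapsto(S(s-\sigma_n(s))-I)g_s\bigr\|_{L^2(0,T;\gamma(H,X))}^p,\]
so the task reduces to bounding $\|(S(s-\sigma_n(s))-I)g_s\|_{\gamma(H,X)}$ for each fixed $(s,\omega)$. Note that $s-\sigma_n(s)\leq T/n$ on $[0,T]$.

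Next I would establish the deterministic inequality: for every $x\in X_\nu$ and every $t\in[0,T/n]$,
\[\|(S(t)-I)x\|_X\leq 2\,t^\nu\|x\|_{X_\nu}.\]
For $\nu=1$ this is immediate from $(S(t)-I)x=\int_0^t S(r)Ax\ud r$, contractivity of $S$, and $\|Ax\|\leq\|x\|_{\Dom(A)}$. For $\nu\in(0,1)$ I would use the $K$-method of real interpolation: any decomposition $x=y+z$ with $y\in X$, $z\in\Dom(A)$ satisfies
\[\|(S(t)-I)x\|\leq\|(S(t)-I)y\|+\|(S(t)-I)z\|\leq 2\|y\|+t\|z\|_{\Dom(A)},\]
using contractivity for the first summand. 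Taking the infimum yields $\|(S(t)-I)x\|\leq 2K(t/2,x)\leq 2(t/2)^\nu\|x\|_{X_\nu}\leq 2t^\nu\|x\|_{X_\nu}$, where $K(\cdot,x)$ is the $K$-functional of the couple $(X,\Dom(A))$.

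Then I would transfer this operator bound to the $\gamma$-norm by the ideal property of $\gamma$-radonifying operators: if $T\in\calL(X_\nu,X)$ and $g\in\gamma(H,X_\nu)$, then $T\circ g\in\gamma(H,X)$ with $\|T\circ g\|_{\gamma(H,X)}\leq\|T\|_{\calL(X_\nu,X)}\|g\|_{\gamma(H,X_\nu)}$ (see \cite[Theorem 9.1.10]{HNVW17}). Applied with $T=S(s-\sigma_n(s))-I$ this gives the pointwise estimate
\[\|(S(s-\sigma_n(s))-I)g_s\|_{\gamma(H,X)}\leq 2\bigl(\tfrac{T}{n}\bigr)^\nu\|g_s\|_{\gamma(H,X_\nu)}.\]
Squaring, integrating over $s\in[0,T]$, taking the $p/2$ power, and then the expectation, I pull out the constant $2(T/n)^\nu$ and combine with the factor $C_{p,D}$ from the first step to obtain the claimed bound.

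The argument is essentially mechanical once the two ingredients (interpolation estimate for $S(t)-I$ and the $\gamma$-ideal property) are in place; the only delicate point is retaining the sharp constant $2$. Using $2K(t/2,x)$ rather than $2K(t,x)$ produces the factor $2^{1-\nu}\leq 2$ uniformly in $\nu\in(0,1]$, which is what matches the constant in the statement.
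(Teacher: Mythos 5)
Your proposal is correct and follows essentially the same route as the paper's proof: invoke Theorem \ref{thm:splitting}, establish the deterministic estimate $\|(S(t)-I)x\|\le 2t^\nu\|x\|_{X_\nu}$ by interpolating between the contractivity bound on $X$ and the bound $t\|Ax\|$ on $\Dom(A)$, and then transfer this pointwise operator bound to the $\gamma(H,X)$-norm via the ideal property. The paper compresses the interpolation step to ``by interpolation we obtain''; you have filled in exactly the $K$-functional computation that yields the constant $2^{1-\nu}\le 2$, which is the right way to read that line.
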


A version of the above result for $C_0$-semigroups which are not necessarily contractive and a general class of discretisation schemes will proved in Theorem \ref{thm:generalapprox1}.
\begin{proof}
Since $\|(I - S(t))x\| \leq 2 \|x\|$ and
\begin{align*}
\|(I - S(t))x\|& \leq \int_0^t \|S(s) Ax\| \ud s \leq t \|A x\|,
\end{align*}
for $0<\nu<1$
by interpolation we obtain
\[\|(I - S(t))x\|\leq 2 t^{\nu} \|x\|_{X_{\nu}}.\]
For $\nu=1$ we have $$\|(I - S(t))x\|\leq t \|x\|_{\Dom(A)} = t \|x\|_{X_1}. $$
The result now follows from Theorem \ref{thm:splitting} and the ideal property (see \cite[Theorem 9.1.10]{HNVW17}).
\end{proof}

\begin{remark}[Pathwise convergence]\label{rem:pathwiseconv}
If we assume $p \nu>1$ in Corollary \ref{cor:splittingdecay}, then for all $\beta\in (0,\nu-\frac1p)$ there exists a random variable $\xi\in L^p(\Omega)$ such that, almost surely,
\[\sup_{j=0,\ldots,n}\|u_{t_j^{(n)}} - u_j^{(n)}\|\leq n^{-\beta} \xi.\]
Indeed, setting $\xi := (\sum_{n\geq 1} n^{\beta p} \sup_{j=0,\ldots,n} \|u_{t_j^{(n)}} - u_j^{(n)}\|^p)^{1/p}$,
by Corollary \ref{cor:splittingdecay} we have
\begin{align*}
\E|\xi|^p \leq \Bigl(2 C_{p,D} \bigl(\frac{T}{n}\bigr)^{\nu}\Bigr)^p \sum_{n\geq 1} n^{\beta p} n^{-\nu p},
\end{align*}
the sum on the right-hand side being convergent since $(\nu-\beta)p>1$.
\end{remark}

\subsection{General time discretisation methods}

We now investigate whether analogues of Theorem \ref{thm:splitting} hold for general time discretisation methods.
Before returning to convergence questions, we consider a stability result for abstract numerical schemes featuring random operators $V_{j,n}$ satisfying an $\F_{t_{j-1}}$-measurability condition. In particular, the operators are allowed to depend on $u$ and $g$ up to time $t_{j-1}$. This makes this result applicable to nonlinear problems.

\begin{proposition}[Stability]\label{prop:stability}
Let $X$ be a $(2,D)$-smooth Banach space and assume that $g\in L_{\bF}^p(\Omega;L^2(0,T;\gamma(H,X)))$ with  $2\le p<\infty$. For $n=1,2,\dots$ and $j=1,\dots, n$ assume that the random contraction $V_{j,n}:\Omega\to \calL(X)$ is such that $V_{j,n}x$ is strongly $\F_{t_{j-1}^{(n)}}$-measurable for all $x\in X$, and define
\begin{equation*}
\begin{cases}
u_0^{(n)} & :=0, \\
u_{j}^{(n)} &: = V_{j,n} (u_{j-1}^{(n)} +  d_j^{(n)}M),   \quad j=1, \ldots, n,
\end{cases}
\end{equation*}
where $d_j^{(n)}M$ is given by \eqref{eq:defMg}. Then
\[\E \sup_{j=0,\ldots,n}\|u_j^{(n)}\|^p\leq K_{p,D}^p \|g\|_{L^p(\Omega;L^2(0,T;\gamma(H,X)))}^p,\]
where $K_{p,D} = \frac{100 D p^{5/2}}{p-1} + \frac{10}{\sqrt{2}} D^2 p$.
\end{proposition}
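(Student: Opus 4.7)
The plan is to recognise the recursion $u_j^{(n)}=V_{j,n}(u_{j-1}^{(n)}+d_j^{(n)}M)$ as fitting the framework of Theorem \ref{thm:Pinelis} exactly. Setting $f_j:=u_j^{(n)}$, $V_j:=V_{j,n}$, and $dg_j:=V_{j,n}d_j^{(n)}M$, I would let $g_j:=\sum_{i=1}^j dg_i$ and observe that $(g_j)$ is a martingale (each $V_{i,n}$ is strongly $\F_{t_{i-1}^{(n)}}$-measurable and $\E_{i-1}d_i^{(n)}M=0$) with $f_j=V_jf_{j-1}+dg_j$. Moreover, each $dg_j$ is conditionally symmetric given $\F_{t_{j-1}^{(n)}}$ by the Brownian-symmetry argument already invoked in Step~1a of the proof of Theorem \ref{thm:contractionS-new}, the $\F_{t_{j-1}^{(n)}}$-measurable factor $V_{j,n}$ being irrelevant to symmetry. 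The conditionally symmetric form of Theorem \ref{thm:Pinelis} then yields
\begin{equation*}
\|f^\star\|_p\le 5p\,\|dg^\star\|_p+10D\sqrt{p}\,\|s(g)\|_p,
\end{equation*}
and the remaining task is to bound each term by $\|g\|_{L^p(\Omega;L^2(0,T;\gamma(H,X)))}$.

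For the square-function term, contractivity of $V_{j,n}$ together with the conditional form of Neidhardt's bound \eqref{eq:Neid} gives $\E_{j-1}\|dg_j\|^2\le D^2\,\E_{j-1}\!\int_{t_{j-1}^{(n)}}^{t_j^{(n)}}\!\|g_s\|_{\gamma(H,X)}^2\,\ud s$, and I would then apply \cite[Proposition 3.2.8]{HNVW16} (exactly as done in the proof of Theorem \ref{thm:generalmart2}) to conclude
\begin{equation*}
\|s(g)\|_p\le D\sqrt{p/2}\,\|g\|_{L^p(\Omega;L^2(0,T;\gamma(H,X)))}.
\end{equation*}
The contribution of this term to Pinelis's bound is then precisely $\tfrac{10}{\sqrt{2}}D^2p\,\|g\|_{L^p(\Omega;L^2(0,T;\gamma(H,X)))}$.

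The main obstacle is the jump term $\|dg^\star\|_p$, and the key observation here is that $g_n$ is itself a stochastic integral: since each $V_{k,n}$ is strongly $\F_{t_{k-1}^{(n)}}$-measurable it can be pulled inside the integral, giving $g_n=\int_0^T\Phi_s\,\ud W_s$ with $\Phi_s:=\sum_k V_{k,n}\one_{(t_{k-1}^{(n)},t_k^{(n)}]}(s)\,g_s$ and $\|\Phi_s\|_{\gamma(H,X)}\le\|g_s\|_{\gamma(H,X)}$ by the ideal property of $\gamma$-radonifying operators. Combining the elementary bound $dg^\star\le 2g^\star$, Doob's maximal inequality $\|g^\star\|_p\le\tfrac{p}{p-1}\|g_n\|_p$, and Proposition \ref{prop:Seid} applied to $\Phi$ (which supplies the constant $10D\sqrt{p}$), one obtains $\|dg^\star\|_p\le\tfrac{20Dp^{3/2}}{p-1}\|g\|_{L^p(\Omega;L^2(0,T;\gamma(H,X)))}$, so that $5p\,\|dg^\star\|_p$ contributes $\tfrac{100Dp^{5/2}}{p-1}\|g\|_{L^p(\Omega;L^2(0,T;\gamma(H,X)))}$. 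Summing the two contributions then yields exactly the claimed constant $K_{p,D}$. The reduction from general $g\in L_{\bF}^p(\Omega;L^2(0,T;\gamma(H,X)))$ to the finite-rank step process case that justifies the stochastic-integral manipulations is routine density.
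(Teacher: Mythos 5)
Your proof is correct and follows essentially the same route as the paper: apply the conditionally symmetric form of Theorem \ref{thm:Pinelis} to the recursion $f_j=V_jf_{j-1}+dg_j$, then estimate the jump term via $dg^\star\le 2g^\star$, Doob's maximal inequality, and Proposition \ref{prop:Seid}, and estimate the square-function term via the conditional Neidhardt bound \eqref{eq:Neid} and \cite[Proposition 3.2.8]{HNVW16}. The only cosmetic difference is where contractivity enters: the paper drops the factors $V_{j,n}$ immediately after Pinelis (using $\|V_{j,n}d_jM\|\le\|d_jM\|$ pointwise) and then works with the martingale $M_t=\int_0^t g_s\,\ud W_s$, while you keep $V_{j,n}$ inside, recognize $g_n=\int_0^T\Phi_s\,\ud W_s$ with $\Phi_s=\sum_k V_{k,n}\one_{(t_{k-1}^{(n)},t_k^{(n)}]}(s)\,g_s$, and invoke the $\gamma$-ideal property to bound $\|\Phi_s\|_{\gamma(H,X)}\le\|g_s\|_{\gamma(H,X)}$. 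Both use the same chain of inequalities and yield the identical constant $K_{p,D}=\tfrac{100Dp^{5/2}}{p-1}+\tfrac{10}{\sqrt{2}}D^2p$.
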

\begin{proof}
We fix $n\ge 1$ and write $t_j:=t_j^{(n)}$, $d_{j}M := d_j^{(n)}M$,  and $d_{j}\wt{M} := V_{j,n} d_j^{(n)}M$. Theorem \ref{thm:Pinelis} and the contractivity of $V_{j,n}$, and Doob's maximal inequality imply that
\begin{align*}
\Big\|\sup_{j=0,\dots,n}\|u_j^{(n)}\|\Big\|_p &\leq 5p \|d\wt{M}^{\star}\|_p + 10D\sqrt{p} \|s(\wt{M})\|_p
\\ &\leq 5p \|dM^{\star}\|_p + 10D\sqrt{p} \|s(M)\|_p
\\ &\leq 10p \|M^{\star}\|_p + 10D\sqrt{p} \|s(M)\|_p
\\ & \leq \frac{10p^2}{p-1}\|M_T\|_p + 10D\sqrt{p} \|s(M)\|_p.
\end{align*}
We will estimate the terms on the right-hand side separately. By Proposition \ref{prop:Seid},
\begin{align*}
\|M_T\|_p \leq 10D\sqrt{p} \|g\|_{L^p(\Omega;L^2(0,T;\gamma(H,X)))}.
\end{align*}
To estimate $s(M)$, by \eqref{eq:Neid} we have
\begin{align*}
\E_{j-1}\|d_jM\|^2\leq D^2 \E_{j-1} \|g\|_{L^2(t_{j-1},t_{j};\gamma(H,X))}^2=:D^2 \E_{j-1}(\xi_j).
\end{align*}
By the dual of Doob's maximal inequality (see \cite[Proposition 3.2.8]{HNVW16}) and using $p/2\geq 1$
\begin{align*}
\|s(M)\|_p^p & \leq D^p \E \Big(\sum_{j=1}^n \E_{j-1}(\xi_j) \Big)^{p/2}
\\ & \leq
(p/2)^{p/2} D^p \E \Big(\sum_{j=1}^n \xi_j \Big)^{p/2}
 = (p/2)^{p/2} D^p \E\|g\|_{L^2(0,T;\gamma(H,X))}^p.
\end{align*}
The required estimate follows by combining the estimates.
\end{proof}

\begin{remark}
 For $p=2$ the inequality holds with
$K_{2,D} = 40 D + 10\sqrt{2} D^2$. This is because in the case $p=2$ we can use \eqref{eq:Neid} instead of Proposition \ref{prop:Seid}.
\end{remark}

\begin{remark}
In the setting of monotone operators on Hilbert spaces, a related stability result for $p=2$ for the implicit Euler method can be found in \cite[Theorem 2.6]{GyMi07}.
\end{remark}

Returning to the problem of convergence, the convergent numerical schemes which we will consider are given in the following definition.

\begin{definition}\label{def:convergentscheme} Let $X$ be a Banach space.
An $\calL(X)$-valued {\em scheme} is a function $R:[0,\infty)\to\calL(Y,X)$.
If $A$ generates a $C_0$-semigroup $S$ on $X$ and $Y$ us a Banach space continuously and densely embedded in $X$,
an $\calL(X)$-valued scheme $R$ is said to {\em approximate $S$ to order $\alpha>0$ on $Y$} if for all $T>0$ there exists a constant $K\geq 0$ such that for all integers $n\geq 1$ and $t\in [0,T]$ we have
\begin{align}\label{eq:convorderalpha}
\|R(t/n)^n  - S(t)\|_{\calL(Y,X)}\leq K (t/n)^{\alpha}.
\end{align}
A scheme $R$ is said to be {\em contractive} if $\n R(t)\n\le 1$ for all $n\ge 1$ and $t\ge 0$.
\end{definition}

If $R$ approximates $S$ to order $\alpha$ on $Y$ and there exists a constant $C\ge 0$ such that
$$\|R(t/n)^n\|\leq C \ \ \hbox{and} \ \  \|S(t)\|\leq C \ \ \hbox{for all} \ \ n\geq 1, \ \ t\in [0,T],$$ then by real interpolation it approximates $S$ to order $\theta\alpha$ on the real interpolation spaces $(X, Y)_{\theta,\infty}$ for $\theta\in (0,1)$ with estimate
\begin{align*}
\|R(t/n)^n  - S(t)\|_{\calL((X, Y)_{\theta,\infty},X)}\leq  (2C)^{1-\theta} K^{\theta} (t/n)^{\theta\alpha}, \ \ t\ge 0.
\end{align*}
An interesting special case arises when $Y = \Dom(A^m)$. If an $\calL(X)$-valued scheme $R$ approximates $S$ to order $\alpha$ on $\Dom(A^m)$, then $R$ approximates $S$ to order $\theta\alpha$ on $(X,\Dom(A^m))_{\theta,\infty}$.

\begin{proposition}\label{prop:lower}
Let $T>0$ and suppose that there exists a constant $C\geq 0$ such that for all $t\in (0,T]$ and integers $n\geq 1$, $\|R(t/n)^n\|\leq C$ and $\|S(t)\|\leq C$.  Suppose that the $\calL(X)$-valued scheme
$R$ approximates $S$ to order $\alpha$ on $\Dom(A^m)$ for some integer $m\ge 1$, and let $0<\theta<1$. Then
$R$ approximates $S$ to order $\theta\alpha$ on $(X,\Dom(A^m))_{\theta,\infty}$.
\end{proposition}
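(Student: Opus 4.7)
The statement is essentially the standard observation already sketched in the paragraph preceding the proposition, promoted to a named result. My plan is therefore to organise it as a one-step application of real interpolation for bounded linear operators.

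First, I would fix $t \in (0,T]$ and $n\ge 1$ and introduce the operator
\[
T_{n,t} := R(t/n)^n - S(t) \in \calL(X),
\]
together with its restriction to $\Dom(A^m)$. The point is to establish the two endpoint bounds that feed into interpolation. The bound on $X$ comes from the assumption $\|R(t/n)^n\|\le C$ and $\|S(t)\|\le C$, yielding
\[
\|T_{n,t}\|_{\calL(X,X)} \le 2C.
\]
The bound on $\Dom(A^m)$ is exactly the hypothesis \eqref{eq:convorderalpha} with $Y=\Dom(A^m)$:
\[
\|T_{n,t}\|_{\calL(\Dom(A^m),X)} \le K (t/n)^{\alpha}.
\]

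Next I would invoke the classical interpolation inequality for linear operators (see e.g.\ \cite{Lun}): if a linear operator $U$ satisfies $\|U\|_{\calL(X_0,Y)}\le M_0$ and $\|U\|_{\calL(X_1,Y)}\le M_1$, then for every $\theta\in(0,1)$ and every $q\in[1,\infty]$ one has
\[
\|U\|_{\calL((X_0,X_1)_{\theta,q},\,Y)} \le M_0^{1-\theta}\, M_1^{\theta}.
\]
Applying this with $X_0=X$, $X_1=\Dom(A^m)$, $Y=X$, $q=\infty$, $M_0=2C$ and $M_1=K(t/n)^\alpha$ gives
\[
\|T_{n,t}\|_{\calL((X,\Dom(A^m))_{\theta,\infty},X)} \le (2C)^{1-\theta} K^{\theta} (t/n)^{\theta\alpha},
\]
which is precisely the estimate \eqref{eq:convorderalpha} of Definition \ref{def:convergentscheme} with exponent $\theta\alpha$ and constant $(2C)^{1-\theta} K^{\theta}$ in place of $K$, uniformly for $t\in(0,T]$ and $n\ge 1$.

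There is no real obstacle here; the only thing to verify carefully is that the interpolation inequality is applied to the \emph{same} operator $T_{n,t}$ viewed on two different domains, and that the constants $2C$ and $K$ are independent of $n$ and of $t\in(0,T]$, so that the resulting constant $(2C)^{1-\theta}K^\theta$ inherits this uniformity. This confirms that $R$ approximates $S$ to order $\theta\alpha$ on $(X,\Dom(A^m))_{\theta,\infty}$.
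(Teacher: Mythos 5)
Your proof is correct and is exactly the argument the paper has in mind: the proposition is stated without a separate proof because it follows immediately from the preceding paragraph's interpolation remark, and your write-up spells out precisely that remark — the endpoint bounds $\|T_{n,t}\|_{\calL(X,X)}\le 2C$ and $\|T_{n,t}\|_{\calL(\Dom(A^m),X)}\le K(t/n)^\alpha$, followed by exact real interpolation with target couple $(X,X)$. The resulting constant $(2C)^{1-\theta}K^\theta$ also agrees with the one displayed in the paper.
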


Since the continuous embedding $\Dom((-A)^{\theta m})\hookrightarrow (X,\Dom(A^m))_{\theta,\infty}$ holds,
we obtain the following: If $\|S(t)\|\leq M e^{\mu t}$ for all $t\ge0$, with $M\ge 1$ and $\mu\in \R$, then
$R$ approximates $S$ to order $\theta\alpha$ on the fractional domain $\Dom((\mu-A)^{\theta m})$.

We will now review some examples of numerical schemes satisfying the conditions of the above definition. Classical references include \cite{BrennerThomee,HershKato} and, for analytic semigroups, \cite{CLPT93}. A new and unified approach to approximation of semigroups which sharpens several classical estimates has been recently developed in \cite{GoTo14,GoHoTo19}.

Part \eqref{it:BrennerThomee} of the next theorem follows from \cite[Theorem 4]{BrennerThomee}; see also \cite{HershKato}. More elaborate versions on interpolation spaces can be found in \cite{Kovacs06}. Part \eqref{it:analyticfolklore1} follows from \cite[Theorem 4.2]{LTW91} by interpolating the stability result \cite[Theorem 5]{CLPT93} using Proposition \ref{prop:lower} (see \cite[Theorem 9.2.3]{Haase} for a direct approach, which also does not rely on $0\in \varrho(A)$).

\begin{theorem}[Time discretisation]\label{thm:BrennerThomee}
Let $r:\C\to \C$ be a rational function such that $|r(z)|\leq 1$ for all $\Re z \le 0$, and assume that there exists an integer $\ell\geq 1$ such that $$|r(z) - e^{z}| = O(z^{\ell+1}) \ \ \hbox{as} \ \ z\to 0.$$
Let $A$ be the generator of a bounded $C_0$-semigroup on $(S(t))_{t\geq 0}$ a Banach space $X$ and set $$R(t) := r(tA), \qquad t\ge 0.$$
\begin{enumerate}[{\rm (1)}]
\item\label{it:BrennerThomee} $R$ approximates $S$ to order $\eta(\ell,k)$ on $\Dom(A^{k})$ for all integers
$k\in \{1, \ldots,\ell+1\}\setminus\{\frac{\ell+1}{2}\}$, where
    \[\eta(\ell,k) = \left\{
                         \begin{array}{ll}
                           k-\frac12, & \hbox{if $k<\frac{\ell+1}{2}$;} \\
                           \frac{k\ell}{\ell+1}, & \hbox{if $\frac{\ell+1}{2}<k\leq \ell+1$.}
                         \end{array}
                       \right.
    \]
\end{enumerate}

If the semigroup is analytic and bounded on a sector, then:
\begin{enumerate}[{\rm (1)}]
\setcounter{enumi}{1}
\item \label{it:analyticfolklore1} $R$ approximates $S$ to order $\nu$ on $\Dom((-A)^{\nu})$ for all $\nu\in (0,\ell]$.
\end{enumerate}
\end{theorem}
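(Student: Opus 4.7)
The plan is essentially to assemble existing classical results from the numerical analysis of operator semigroups, augmented for part (2) by our Proposition \ref{prop:lower}. Since both parts are attributed to specific references in the literature, my approach is to outline the strategy underlying those references rather than reproduce the detailed technical computations.

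For part (1), I would invoke the classical Brenner--Thomée theorem \cite[Theorem 4]{BrennerThomee}. The underlying strategy there proceeds in three steps. First, the hypothesis $|r(z)| \leq 1$ for $\Re z \leq 0$, together with the boundedness of $S$, yields a uniform bound $\sup_{n\geq 1,\, t\in [0,T]} \|R(t/n)^n\| \leq C$; this is obtained via the Hille--Phillips functional calculus, using that the spectrum of $A$ is contained in the left half-plane. Second, the order condition $r(z) = e^z + O(z^{\ell+1})$ gives a local error bound $\|R(t)x - S(t)x\| \leq C t^k \|A^k x\|$ for $x \in \Dom(A^k)$ and $1\leq k \leq \ell+1$. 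Third, one converts the local bound into a global one by the standard telescoping identity
\[
R(h)^n - S(nh) \;=\; \sum_{j=0}^{n-1} R(h)^{j}\,(R(h) - S(h))\,S((n-1-j)h), \qquad h = t/n.
\]
The two regimes in the definition of $\eta(\ell,k)$, split at $k = (\ell+1)/2$, reflect different balancing strategies in summing the local errors, exploiting either the smoothness of $x$ or the residual smoothing provided by $R$ and $S$; this balancing is the technical heart of the Brenner--Thomée argument.

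For part (2), the plan is to combine three ingredients. First, by \cite[Theorem 4.2]{LTW91}, in the bounded analytic case $R$ approximates $S$ to order $\ell$ on $\Dom(A^\ell)$; here analyticity and the associated Dunford calculus yield sharper control than in the general case. Second, \cite[Theorem 5]{CLPT93} provides the uniform stability bound $\sup_{n\geq 1,\, t\in [0,T]} \|R(t/n)^n\| \leq C$ under the sectorial hypothesis. With these two ingredients in place, Proposition \ref{prop:lower}, applied with $m = \ell$ and $\theta = \nu/\ell$, yields the approximation of order $\theta \ell = \nu$ on the real interpolation space $(X, \Dom(A^\ell))_{\nu/\ell,\infty}$. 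The statement for $\Dom((-A)^\nu)$ then follows from the standard continuous embedding $\Dom((-A)^\nu) \hookrightarrow (X, \Dom(A^\ell))_{\nu/\ell,\infty}$ valid for generators of bounded analytic semigroups (see \cite{Haase,Lun}).

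The main obstacle is entirely contained in part (1): the precise form of $\eta(\ell,k)$, in particular the discontinuity at $k = (\ell+1)/2$, requires a subtle analysis of the Padé-type structure of $r$ and careful extraction of cancellations in the telescoping sum. This is exactly the content of \cite[Theorem 4]{BrennerThomee} and lies outside the scope of what we would reproduce; we simply cite it.
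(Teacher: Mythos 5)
Your proposal matches the paper's own treatment: both parts are handled by citation, with part (1) deferred entirely to \cite[Theorem 4]{BrennerThomee} and part (2) obtained by combining the order-$\ell$ approximation on $\Dom(A^\ell)$ from \cite[Theorem 4.2]{LTW91} with the stability bound of \cite[Theorem 5]{CLPT93} and the interpolation of Proposition \ref{prop:lower}, followed by the embedding of the fractional domain into the real interpolation space. The only thing worth noting is that the interpolation in Proposition \ref{prop:lower} covers $\theta=\nu/\ell\in(0,1)$, i.e., $\nu\in(0,\ell)$, while the endpoint $\nu=\ell$ is the base case from \cite{LTW91} itself; and the embedding $\Dom((-A)^\nu)\hookrightarrow(X,\Dom(A^\ell))_{\nu/\ell,\infty}$ requires a little care when $0\notin\varrho(A)$, which is precisely the reason the paper points to \cite[Theorem 9.2.3]{Haase} as an alternative route.
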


\begin{example}[Time discretisation for $C_0$-semigroups]\label{ex:generaldiscr}
Let $A$ be the generator of a bounded $C_0$-semigroup $(S(t))_{t\geq 0}$ on a Banach space $X$. For each of the functions $r$ below we set $$R(t) := r(tA), \qquad t\ge 0.$$ Then $R$ approximates $S$ in each of the following cases:
\begin{enumerate}[{\rm (1)}]
\item splitting: $r(z) = e^{z}$, to any order on $X$.
\item implicit Euler: $r(z) = (1-z)^{-1}$, to order $\alpha$ on $\Dom((-A)^{2\alpha})$ for all $\alpha\in (0,1]$ (see \cite[Theorem 1.3]{GoTo14} or \cite[Corollary 4.4]{Kovacs06}).
\item Crank--Nicholson: $r(z) = (2+z)(2-z)^{-1}$, to order $\nu$ on $\Dom((-A)^{k})$ for points $(k,\nu)$ on the graph of the piecewise linear function connecting the points $(\frac12, 0)$, $ (1,\frac12)$, $(2,\frac43)$, and $(3,2)$ (see \cite[Theorem 1.1 and 4.1]{Kovacs06}). If moreover $R$ is stable (see Proposition \ref{prop:contractionHilbert} for sufficient conditions), then the order is $\nu$ on $\Dom((-A)^{{3\nu}/{2}})$ for any $\nu\in (0,2]$ (see \cite[Corollary 4.4]{Kovacs06}).
\end{enumerate}
\end{example}

\begin{example}[Time discretisation for analytic $C_0$-semigroups]\label{ex:analyticdiscr}
Let $A$ be the generator of a bounded analytic $C_0$-semigroup $(S(t))_{t\geq 0}$ on $X$.
For each of the functions $r$ below we set $$R(t) := r(tA), \qquad t\ge 0.$$ Then $R$ approximates $S$ in each of the following cases:
\begin{enumerate}[{\rm (1)}]
\item splitting: $r(z) = e^{z}$, to any order on $X$.
\item implicit Euler: $r(z) = (1-z)^{-1}$, to order $\nu$ on $\Dom((-A)^{\nu})$ for any $\nu\in (0,1]$.
\item Crank--Nicholson: $r(z) = (1+\frac12z)(1-\frac12z)^{-1}$, to order $2\nu$ on $\Dom(A^{2\nu})$ for any $\nu\in (0,1]$.
\end{enumerate}
\end{example}

If $A$ generates a contractive $C_0$-semigroup $(S(t))_{t\geq 0}$ the splitting method and implicit Euler methods lead to contractive approximants $S_n(t)$. In the following proposition we discuss another class of examples where this holds. It applies to all numerical schemes of the form $R(t) = r(tA)$ considered in Theorem \ref{thm:BrennerThomee} and includes all schemes considered in \cite{BrennerThomee, HershKato}.  We use the notation
$$ \Sigma_\sigma = \{z\in \C\setminus\{0\}: \ |\arg(z)|<\sigma\},$$
where the argument is taken from $(-\pi,\pi]$.

\begin{proposition}\label{prop:contractionHilbert}
Let $A$ be the generator of a $C_0$-semigroup of contractions on a Hilbert space. Suppose that $r:\Sigma_\sigma\to \C$ is holomorphic for some $\frac12\pi<\sigma<\pi$ and satisfies $|r(z)|\leq 1$ for all $\Re z\ge 0$. Then $\|r(-tA)\|\leq 1$ for all $t>0$, where
$r(-tA)$ is defined through the $H^\infty$-calculus of $-A$.
\end{proposition}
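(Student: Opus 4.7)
The plan is to reduce the proposition, via the Cayley transform, to the classical Sz.--Nagy--Foias $H^\infty(\mathbb{D})$-calculus for contractions on Hilbert space. Throughout I write $\C_+:=\{z\in \C:\Re z>0\}$ for the open right half-plane.

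By the Lumer--Phillips theorem, $A$ generates a $C_0$-semigroup of contractions on $H$ if and only if $-A$ is m-accretive, so for fixed $t>0$ the operator $B:=-tA$ is m-accretive as well. Its spectrum lies in the closed right half-plane, making $B$ sectorial of angle $\pi/2$; together with the hypothesis that $r$ is holomorphic on a sector $\Sigma_\sigma$ strictly larger than $\C_+$, this ensures that $r(B)$ is well defined through the sectorial $H^\infty$-calculus, and the assumption $|r|\leq 1$ on $\overline{\C_+}$ says precisely that $\|r\|_{H^\infty(\C_+)}\leq 1$.

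Next, I would form the Cayley transform $T:=(B-I)(B+I)^{-1}$, which is a contraction on $H$ (accretivity of $B$ yields $\|x\|^2-\|Tx\|^2=4\Re\langle By,y\rangle\geq 0$ after writing $x=(B+I)y$ with $y\in \Dom(B)$). The M\"obius map $c(z):=(z-1)/(z+1)$ is a conformal isomorphism $\C_+\to\mathbb{D}$, and setting $\tilde r:=r\circ c^{-1}$ produces a function in $H^\infty(\mathbb{D})$ with $\|\tilde r\|_{H^\infty(\mathbb{D})}=\|r\|_{H^\infty(\C_+)}\leq 1$. The composition rule for the $H^\infty$-calculus gives $r(B)=\tilde r(T)$, while the Sz.--Nagy--Foias $H^\infty(\mathbb{D})$-calculus for Hilbert space contractions---a consequence of Sz.--Nagy's unitary dilation theorem and the classical von Neumann inequality---supplies $\|\tilde r(T)\|\leq\|\tilde r\|_{H^\infty(\mathbb{D})}\leq 1$. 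Stringing these together yields $\|r(-tA)\|=\|r(B)\|\leq 1$.

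The main technical point is the compatibility identity $r(B)=\tilde r(T)$. For rational $r$ without poles in $\overline{\C_+}$ it is an immediate spectral-mapping computation; the general case is handled by approximating $\tilde r$ boundedly and pointwise by rational functions on $\mathbb{D}$ (for instance via Abel or Fej\'er means of the boundary function), transferring these approximants through $c$ to rational approximants of $r$ on $\C_+$, and invoking McIntosh's convergence lemma on the sectorial side together with the bounded-pointwise continuity of the Sz.--Nagy--Foias calculus on the disc side. This reconciliation of the two calculi is classical (see e.g.\ Haase's monograph on the functional calculus for sectorial operators), so in practice the proof reduces to quoting the theorem that every m-accretive operator on a Hilbert space admits a bounded $H^\infty(\C_+)$-calculus of norm one.
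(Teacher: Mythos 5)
Your argument is correct and reaches the same endpoint as the paper: the paper's ``proof'' is a one-line citation to \cite[Theorem 10.2.24]{HNVW17}, namely the fact that an (injective, m-accretive) operator on a Hilbert space has a bounded $H^\infty$-calculus of angle $\frac12\pi$ with constant one. What you do is unpack that cited theorem: Lumer--Phillips gives m-accretivity of $B=-tA$, the Cayley transform $T=(B-I)(B+I)^{-1}$ is a Hilbert-space contraction, the M\"obius change of variables carries $r\in H^\infty(\C_+)$ to $\tilde r\in H^\infty(\mathbb{D})$, and the Sz.--Nagy--Foias calculus (ultimately von Neumann's inequality) bounds $\|\tilde r(T)\|$ by $\|\tilde r\|_\infty\le 1$. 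That is exactly the classical proof of the cited theorem, so the two routes coincide.

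The one point worth flagging is the compatibility identity $r(B)=\tilde r(T)$ for general $r\in H^\infty$, which you correctly identify as the technical crux. There is a subtlety when $1\in\sigma(T)$ (equivalently, $B$ is unbounded) or when $T$ has a non-trivial unitary part (equivalently, $B$ has a skew-adjoint part), since the Sz.--Nagy--Foias calculus for non-c.n.u.\ contractions requires compatibility of boundary values of $\tilde r$ with the spectral measure of the unitary part. In the paper's application $r$ is always a rational function with no poles in $\overline{\C_+}$ (Theorem \ref{thm:BrennerThomee}, Example \ref{ex:generaldiscr}), so $\tilde r$ is rational and continuous on $\overline{\mathbb{D}}$ and the approximation machinery is unnecessary; the identity $r(B)=\tilde r(T)$ is then a direct algebraic check and the bound follows from the plain von Neumann inequality. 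For general $r\in H^\infty(\Sigma_\sigma)$ the reconciliation you describe (Abel/Fej\'er means plus McIntosh's convergence lemma) is the standard way to close the gap, and it is carried out carefully in Haase's book and in \cite{HNVW17}, so your deferral to the literature is appropriate.
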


The proof is immediate from \cite[Theorem 10.2.24]{HNVW17}.
The proposition is false beyond the Hilbert space setting. Indeed, for the operator $A = {\rm d}/{\rm d}x$ on $X=L^p(\R)$ with $p\neq 2$ or $X=C_0(\R)$, in \cite{BrennerThomee70} it was shown that contractivity of $R(t)$ fails
for a general class of schemes  (see also \cite{CLPT93} for the Crank--Nicholson scheme).

In what follows we restrict ourselves to the semigroup setting, but expect the results to extend to evolution families under suitable additional conditions. In the next theorem we obtain convergence rates for a rather general class of discretisation schemes, which in case of the splitting method turn out to be equal to the ones of Corollary \ref{cor:splittingdecay} up to a logarithmic term. Modulo this term, the theorem extends Corollary \ref{cor:splittingdecay} in two ways:
\begin{itemize}
\item contractivity of $S$ is not needed;
\item the result holds for arbitrary approximation schemes.
\end{itemize}
The proof directly uses Seidler's version of the Burkholder inequality of Proposition \ref{prop:Seid} in combination Proposition \ref{prop:stochasticellinftyn} and works for $C_0$-semigroup and numerical schemes that are not necessarily contractive. The results of Sections \ref{sec:PBR} and \ref{sec:main} are not used. One should carefully note, however, that inhomogeneities $g$ taking values in $\gamma(H,X_{\nu})$ are considered, where $X_{\nu}$ is a suitable intermediate space between $X$ and $\Dom(A^m)$.
The case of inhomogeneities $g$ taking values in $\gamma(H,X)$ will be considered in Theorem \ref{thm:generalapprox2} and does require contractivity.

\begin{theorem}[Convergence rates without contractivity]\label{thm:generalapprox1}
Let $A$ be the generator of a $C_0$-semigroup $S=(S(t))_{t\geq 0}$ on a $(2,D)$-smooth Banach space $X$ and let $R$ be  an $\calL(X)$-valued scheme approximating $S$ to order $\alpha$ on a Banach space $Y$ continuously embedded in $ X_{\alpha}$ for some $\alpha\in (0,1]$, where $X_{\alpha} := (X,\Dom(A))_{\alpha,\infty}$ if $\alpha\in (0,1)$ and $X_{1} := \Dom(A)$. Let $g\in L_{\bF}^p(\Omega;L^2(0,T;\gamma(H,Y)))$ with $0< p<\infty$, and
let $u_t:=\int_0^t S(t-s) g_s\ud W_s$ for $t\in [0,T]$.  Define, for $n\ge 1$,
\begin{equation}\label{eq:iterationg}
\begin{cases}
u_0^{(n)} & :=0, \\ u_j^{(n)} &:= R(T/n)(u_{j-1}^{(n)} + d_j^{(n)}M), \quad j=1,\dots,n,
\end{cases}
\end{equation}
where $d_j^{(n)}M$ is given by \eqref{eq:defMg}.
Then for all $n\geq 3$,
\begin{align}\label{eq:estungeneralscheme}
\E\sup_{j=0,\ldots,n}\|u_{t_j^{(n)}} - u_j^{(n)}\|^p \le \Bigl(LC_{p,D}\frac{\sqrt{\log (n+1)}}{n^{\alpha}}\Bigr)^p \|g\|_{L^p(\Omega;L^2(0,T;\gamma(H,Y)))}^p,
\end{align}
where $L:=(2 K_{\alpha,Y}C_{S,T} +K)T^{\alpha}$, with $K_{\alpha,Y}$ the norm of the embedding $Y\hookrightarrow X_{\alpha}$, $C_{S,T} := \sup_{t\in [0,T]}\|S(t)\|$, and $K$ the constant in \eqref{eq:convorderalpha}.

If $2\le p<\infty$, the estimate holds with $C_{p,D} = 10D\sqrt{2ep}$.
\end{theorem}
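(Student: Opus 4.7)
The plan is to represent the error $u_{t_j^{(n)}} - u_j^{(n)}$ as a single stochastic integral whose integrand is an operator-valued process depending on $j$, and then apply Proposition \ref{prop:stochasticellinftyn} to handle the supremum over $j$ at the cost of a $\sqrt{\log n}$ factor. This lets us avoid any use of Theorem \ref{thm:Pinelis} or contractivity of $S$ or $R$.

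First I would unfold the recursion \eqref{eq:iterationg} to get $u_j^{(n)} = \sum_{k=1}^{j} R(T/n)^{j-k+1} d_k^{(n)}M$, and compare with
\[u_{t_j^{(n)}} = \sum_{k=1}^{j} \int_{t_{k-1}^{(n)}}^{t_k^{(n)}} S(t_j^{(n)}-s) g_s\,\ud W_s,\]
yielding
\[u_{t_j^{(n)}} - u_j^{(n)} = \int_0^T \Phi_s^{(j)}\,\ud W_s, \qquad j=1,\dots,n,\]
where for $s\in [t_{k-1}^{(n)},t_k^{(n)})$ we set $\Phi_s^{(j)} = [S(t_j^{(n)}-s) - R(T/n)^{j-k+1}]g_s$ if $k\le j$, and $\Phi_s^{(j)} = 0$ otherwise.

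The crucial step is a uniform estimate for the operator $S(t_j^{(n)}-s) - R(T/n)^{j-k+1}$ in $\calL(Y,X)$, obtained by adding and subtracting $S((j-k+1)T/n)$. The first resulting piece is bounded in $\calL(X_\alpha,X)$ by $2C_{S,T}(T/n)^\alpha$ via the interpolation estimate \eqref{eq:normStSs} used in the proof of Proposition \ref{prop:maximalineqspacereg} (since $|t_j^{(n)}-s - (j-k+1)T/n| = s-t_{k-1}^{(n)}\le T/n$), while the second piece is bounded in $\calL(Y,X)$ by $K(T/n)^\alpha$ directly from Definition \ref{def:convergentscheme}. Using the embedding $Y\hookrightarrow X_\alpha$ with constant $K_{\alpha,Y}$, this combines to
\[\big\|S(t_j^{(n)}-s) - R(T/n)^{j-k+1}\big\|_{\calL(Y,X)} \le (2K_{\alpha,Y}C_{S,T}+K)(T/n)^\alpha = L/n^\alpha.\]
Viewing $\Phi_s$ as an element of $\calL(Y,\ell^\infty_n(X))$ given by $y\mapsto (\Phi_s^{(j)}y/g_s)_{j=1}^n$ composed with $g_s$, the ideal property of the $\gamma$-norm (\cite[Theorem 9.1.10]{HNVW17}) yields
\[\|\Phi_s\|_{\gamma(H,\ell^\infty_n(X))} \le \frac{L}{n^\alpha}\,\|g_s\|_{\gamma(H,Y)}.\]

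To finish, I would apply \eqref{eq:stochasticellinftyn-1} of Proposition \ref{prop:stochasticellinftyn} (valid for $n\ge 3$) to the process $\Phi = (\Phi^{(j)})_{j=1}^n$:
\[\Big(\E\sup_{j=1,\dots,n}\|u_{t_j^{(n)}}-u_j^{(n)}\|^p\Big)^{1/p} \le C_{p,D}\sqrt{\log n}\,\|\Phi\|_{L^p(\Omega;L^2(0,T;\gamma(H,\ell^\infty_n(X))))},\]
and then substitute the pointwise bound on $\|\Phi_s\|_{\gamma(H,\ell^\infty_n(X))}$ to obtain the claimed inequality (noting $\sqrt{\log n}\le\sqrt{\log(n+1)}$). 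The main obstacles are purely bookkeeping: carefully setting up $\Phi$ as a progressively measurable $\ell^\infty_n(X)$-valued operator process so that Proposition \ref{prop:stochasticellinftyn} applies, and correctly combining the two error sources (time discretisation of $S$ and scheme approximation) into the single uniform $\calL(Y,X)$-bound of order $n^{-\alpha}$.
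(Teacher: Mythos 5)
Your argument is essentially identical to the paper's proof: the same unfolding of the recursion to write $u_{t_j^{(n)}}-u_j^{(n)}$ as a stochastic integral against a $\gamma(H,\ell^\infty_n(X))$-valued process, the same ``add and subtract $S((j-k+1)T/n)$'' decomposition splitting the error into a time-regularity piece (bounded by $2K_{\alpha,Y}C_{S,T}(T/n)^\alpha$ via \eqref{eq:normStSs} and the embedding $Y\hookrightarrow X_\alpha$) and a scheme-approximation piece (bounded by $K(T/n)^\alpha$ via Definition \ref{def:convergentscheme}), and the same final application of \eqref{eq:stochasticellinftyn-1} from Proposition \ref{prop:stochasticellinftyn}. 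The only cosmetic differences are the indexing (the paper packages the approximation in a piecewise-constant operator $S_n$ rather than carrying the $(j,k)$ indices explicitly) and the slightly informal notation ``$\Phi_s^{(j)}y/g_s$'' for the factorization $\Phi_s^{(j)}=T_s^{(j)}\circ g_s$ needed to invoke the $\gamma$-ideal property.
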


Examples of numerical schemes satisfying the conditions of the theorem can be obtained from Examples \ref{ex:generaldiscr} and \ref{ex:analyticdiscr}. Note that the embedding condition $Y\hookrightarrow X_{\alpha}$ is satisfied for the real interpolation spaces $(X, \Dom(A))_{\alpha,r}$ with $1\le r\le \infty$, the complex interpolation spaces $[X, \Dom(A)]_{\alpha}$ and the fractional domain spaces $\Dom((\mu-A)^\alpha)$ for suitable $\mu\in \varrho(A)$ for all $\alpha\in (0,1)$.

As in Remark \ref{rem:pathwiseconv}, \eqref{eq:estungeneralscheme} implies almost sure pathwise convergence of order $n^{-\beta}$, provided that $\alpha p>1$ and $\beta\in (0,\alpha-\frac1p)$.

\begin{proof}
Let $S_n:[0,T]\to \calL(X)$ be given by
\[S_n(t) := R(T/n)^j, \qquad t\in [t^{(n)}_{j-1}, t^{(n)}_j), \ j=1,\ldots,n.\]
With this notation,
\begin{align*}
u_k^{(n)} & = \sum_{j=1}^k R(T/n)^{k-j+1} d_j^{(n)}M
\\ & = \sum_{j=1}^k \int_{t^{(n)}_{j-1}}^{t^{(n)}_j} S_n(t^{(n)}_k-s) g_s \ud W_s = \int_0^{t^{(n)}_k} S_n(t^{(n)}_k-s) g_s \ud W_s.
\end{align*}
Therefore,
\begin{align*}
u(t^{(n)}_k) - u_k^{(n)} = \int_0^{T} \one_{[0,t^{(n)}_k]}(s) (S(t^{(n)}_k-s) -S_n(t^{(n)}_k-s)) g_s \ud W_s.
\end{align*}
By the bound \eqref{eq:stochasticellinftyn-1} in Proposition \ref{prop:stochasticellinftyn}, for $n\ge 3$ we have
\begin{align*}
\ & \Big(\E\sup_{j=0,\ldots,n}\|u(t^{(n)}_k) - u_k^{(n)}\|^p\Big)^{1/p}
\\ & \leq C_{p,D}\sqrt{\log(n+1)} \|(s,k)\mapsto \one_{[0,t^{(n)}_k]}(s) (S(t^{(n)}_k-s) -S_n(t^{(n)}_k\!-s)) g_s\|_{L^p(\Omega;L^2(0,T;\gamma(H,\ell^\infty_n(X))))}
\\ & \leq C_{p,D} \sqrt{\log(n+1)}\sup_{s\in [0,T]} \|S(s) -S_n(s)\|_{\calL(Y,X)} \|g\|_{L^p(\Omega;L^2(0,T;\gamma(H,Y)))},
\end{align*}
where we may take $C_{p,D} = 10D\sqrt{2ep}$ if $2\le p<\infty$.

By \eqref{eq:normStSs},
for $0\leq s\leq t\leq T$
we have
\[\|S(t)- S(s)\|_{\calL(Y,X)}\leq K_{\alpha,Y} \|S(t)- S(s)\|_{\calL(X_{\alpha},X)} \leq 2 K_{\alpha,Y} C_{S,T} |t-s|^\alpha.\]
Hence from the assumption on the numerical scheme  we conclude that for all $s\in [t^{(n)}_{j-1}, t^{(n)}_j)$,
\begin{align*}
 \|S(s) -S_n(s)\|_{\calL(Y,X)}
& = \|S(s)- S(t^{(n)}_{j})+ S(t^{(n)}_{j})- R(T/n)^j\|_{\calL(Y,X)}
\\ & \leq \|S(s)- S(t^{(n)}_{j})\|_{\calL(Y,X)} + \|S(t^{(n)}_{j})- R(T/n)^j\|_{\calL(Y,X)}
\\ & \leq 2K_{\alpha,Y}C_{S,T} (T/n)^{\alpha} + K (t_j^{(n)}/j)^{\alpha}
\\ & \leq  (2K_{\alpha,Y}C_{S,T} + K)T^{\alpha}  n^{-\alpha}.
\end{align*}
\end{proof}

For $C_0$-semigroups of contractions and contractive discretisation schemes, the next theorem provides uniform convergence in time for inhomogeneities $g$ taking values in $\gamma(H,X)$.

\begin{theorem}[Convergence for contractive schemes]\label{thm:generalapprox2}
Let $A$ be the generator of a $C_0$-contraction semigroup $S=(S(t))_{t\geq 0}$ on a $(2,D)$-smooth Banach space $X$. Let $R$ be an $\calL(X)$-valued contractive scheme approximating $S$ to some order $\alpha\in (0,1]$ on $\Dom(A)$.
Let $g\in L_{\bF}^p(\Omega;L^2(0,T;\gamma(H,X)))$ with $2\leq p<\infty$ and
let
$u_t:= \int_0^t S(t,s) g_s\ud W_s$ for $t\in [0,T]$.  Defining $(u^{(n)}_j)_{j=0}^n$ as in the preceding theorem, we have
\[ \lim_{n\to \infty} \E\sup_{j=0,\ldots,n}\|u_{t_j^{(n)}} - u_j^{(n)}\|^p =0.\]
\end{theorem}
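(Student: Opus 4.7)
The plan is to reduce the convergence statement to the explicit-rate result Theorem \ref{thm:generalapprox1} by a density argument, controlling the approximation error through the maximal inequality of Theorem \ref{thm:contractionS-new} on the continuous side and the stability estimate of Proposition \ref{prop:stability} on the discrete side. Let $J_k := k(k-A)^{-1}$ denote the Yosida regularisers, which are contractions on $X$ with range contained in $\Dom(A)$, and define $g^{(k)}_t := J_k g_t$. Since $J_k$ maps $X$ boundedly into $\Dom(A)$, the ideal property of the $\gamma$-norm gives $g^{(k)} \in L_{\bF}^p(\Omega;L^2(0,T;\gamma(H,\Dom(A))))$. Moreover, since $J_k \to I$ strongly on $X$ with $\|J_k\|_{\calL(X)} \le 1$, the $\gamma$-multiplier theorem (see \cite[Chapter 9]{HNVW17}) implies $J_k T \to T$ in $\gamma(H,X)$ for every $T \in \gamma(H,X)$; combined with dominated convergence this yields $g^{(k)} \to g$ in $L_{\bF}^p(\Omega;L^2(0,T;\gamma(H,X)))$.

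Write $u^{(k)}_t := \int_0^t S(t-s) g^{(k)}_s \ud W_s$ and let $(u^{(n,k)}_j)_{j=0}^n$ denote the discrete iterates defined by \eqref{eq:iterationg} with $g$ replaced by $g^{(k)}$. Triangulating,
\[
u_{t_j^{(n)}} - u_j^{(n)} = \bigl(u_{t_j^{(n)}} - u^{(k)}_{t_j^{(n)}}\bigr) + \bigl(u^{(k)}_{t_j^{(n)}} - u^{(n,k)}_j\bigr) + \bigl(u^{(n,k)}_j - u_j^{(n)}\bigr).
\]
Applying Theorem \ref{thm:contractionS-new} to $g - g^{(k)}$ bounds the first difference by $C_{p,D} \n g - g^{(k)}\n_{L^p(\Omega;L^2(0,T;\gamma(H,X)))}$ in $L^p$-norm. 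Since $u^{(n)}_j - u^{(n,k)}_j$ is generated by the deterministic contractions $V_{j,n} := R(T/n)$ acting on the martingale increments of $\int (g - g^{(k)}) \ud W$, Proposition \ref{prop:stability} bounds the third difference by $K_{p,D} \n g - g^{(k)}\n_{L^p(\Omega;L^2(0,T;\gamma(H,X)))}$. Crucially, both estimates are uniform in $n$, so both terms can be made arbitrarily small by choosing $k$ large.

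For the middle term, Theorem \ref{thm:generalapprox1} applies with $Y = \Dom(A)$ (which embeds continuously into $X_\alpha = (X,\Dom(A))_{\alpha,\infty}$ for $\alpha \in (0,1)$ and trivially into $X_1 = \Dom(A)$) to give, for each fixed $k$,
\[
\Bigl(\E \sup_j \|u^{(k)}_{t_j^{(n)}} - u^{(n,k)}_j\|^p\Bigr)^{1/p} \le L\, C_{p,D}\, \frac{\sqrt{\log(n+1)}}{n^\alpha}\, \n g^{(k)}\n_{L^p(\Omega;L^2(0,T;\gamma(H,\Dom(A))))} \to 0
\]
as $n \to \infty$. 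Given $\varepsilon > 0$, we first fix $k$ so that the first and third contributions are each at most $\varepsilon/3$ uniformly in $n$, and then choose $n$ large enough that the middle contribution is at most $\varepsilon/3$; this proves the claim. The only essentially new ingredient is the verification that $J_k g \to g$ in the $\gamma$-norm-valued $L^p(L^2)$ space, which is the main obstacle but is standard once one invokes the ideal property and $\gamma$-convergence of strongly convergent uniformly bounded operator sequences; all other steps recombine already-established estimates in a $3\varepsilon$-argument.
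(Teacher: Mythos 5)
Your proof is correct and follows essentially the same $3\varepsilon$-density argument as the paper: approximate $g$ by a $\Dom(A)$-valued process, use the uniform (in $n$) boundedness supplied by Theorem \ref{thm:contractionS-new} and Proposition \ref{prop:stability} to control the two error terms that replace $g$ by its approximant, and invoke Theorem \ref{thm:generalapprox1} for the remaining term. The only difference is cosmetic: the paper simply chooses $f \in L^p_{\bF}(\Omega;L^2(0,T;\gamma(H,\Dom(A))))$ close to $g$ by density, whereas you exhibit an explicit approximating sequence via Yosida regularisers, which fills in the density step in more detail.
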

\begin{proof}
Let $\ell_{n+1}^\infty(X):= \bigoplus_{j=0}^n X$ with norm $\|(x_0,\dots,x_n)\|:= \max_{j=0,\ldots,n}\n x_j\n$ and $Z_p^{(n)} := L^p(\Omega;\ell_{n+1}^\infty(X))$.
Let $J,J^{(n)}:L^p_{\bF}(\Omega;L^2(0,T;\gamma(H,X)))\to Z_p^{(n)} $ be the linear operators given by
\[(J g)_j = u_{t_j}^{(n)}, \ \  \text{and} \ \ (J^{(n)} g)_j := u_{j}^{(n)},\qquad j=0,\dots,n.\]
By Theorem \ref{thm:contractionS-new} and Proposition \ref{prop:stability}, the operators
$J$ and $J^{(n)}$ are (uniformly) bounded with $\|J\|\leq C_{p,D}$ and $\|J_n\|\leq K_{p,D}$ respectively, the latter constant being defined as in Proposition \ref{prop:stochasticellinftyn}.

To prove convergence in $Z_p^{(n)}$,
fix $\varepsilon>0$ and let $f\in L^p(\Omega;L^2(0,T;\gamma(H,\Dom(A))))$ be such that $\|g-f\|_{L^p(\Omega;L^2(0,T;\gamma(H,X)))}<\varepsilon$.
By the boundedness and linearity of $J$ and $J^{(n)}$,
\begin{align*}
\|J(g) & - J^{(n)}(g)\|_{Z_p^{(n)}} \\ & \leq \|J(g) - J(f)\|_{Z_p^{(n)}} + \|J(f) - J^{(n)}(f)\|_{Z_p^{(n)}} + \|J^{(n)}(f) - J^{(n)}(g)\|_{Z_p^{(n)}}
\\ & \leq (C_{p,D} + K_{p,D}) \varepsilon + \|J(f) - J^{(n)}(f)\|_{Z_p^{(n)}},
\end{align*}
and the last term tends to zero as $n\to \infty$ by Theorem \ref{thm:generalapprox1}. Since $\varepsilon>0$ was arbitrary the result follows.
\end{proof}

\subsection{Applications to SPDE}\label{subsec:applischemes}
We will now apply the results to some simple examples of stochastic PDE and compare the results with results available in the literature. It goes without saying that with additional work more sophisticated problems can be treated. While this will be taken up in  forthcoming work, the objective here is
to treat some model problems in order to see where our methods can be expected to improve the presently available rates.

We begin with the  stochastic heat equation. The results of the next example can be extended to more general uniformly elliptic operators with space-dependent coefficients. As will follow from Section \ref{sec:random}, if one is only interested in the splitting method the coefficients can even be taken progressively measurable in $(t,\omega)$.

\begin{example}[Stochastic heat equation]\label{ex:stochheat}
Consider the inhomogeneous stochastic heat equation on $\R^d$:
\begin{equation}\label{eq:SEE-heat}
\begin{cases}
\ud u_t &= \Delta u_t + \sum_{k\geq 1} g_{t}^{k}\ud W_t^k, \quad t\in [0,T].
\\ u_0 & = 0.
\end{cases}
\end{equation}
We assume that $g = (g^{k})_{k\geq 1}$ belongs to $L^p_{\bF}(\Omega;L^2(0,T;H^{\lambda,q}(\R^d;\ell^2)))$ with $0<p<\infty$, and $W=(W^k)_{k\geq 1}$ is a sequence of independent standard Brownian motions.
We can view $W$ as an $\ell^2$-cylindrical Brownian motion in a natural way by putting, for $h = (k_k)_{k\ge 1}\in \ell^2$,
$W_t h:= W(\one_{(0,t)\otimes h}) := \sum_{k\ge 1} h_k W_k$, noting that the sum on the right-hand side converges in
$L^2(\Om)$.
As is well known, the operator $\Delta$ generates an analytic $C_0$-semigroup of contractions on the Bessel potential
spaces $H^{\lambda,q}(\R^d)$ and $\Dom(\Delta) = H^{\lambda+2,q}(\R^d)$ for all $\lambda\in \R$ and $1<q<\infty$.

Let us now assume that $2\le q<\infty$. By Theorem \ref{thm:contractionS-new}, the mild solution $u$ to the problem \eqref{eq:SEE-heat} has a continuous modification with values in $H^{\lambda,q}(\R^d)$ which satisfies
\begin{align*}
\E \sup_{t\in [0,T]}\|u_t\|_{H^{\lambda,q}(\R^d)}^p \leq C_{p,q}^p \E\|g\|_{L^2(0,T;H^{\lambda,q}(\R^d;\ell^2))}^p,
\end{align*}
where we may take $C_{p,q} = 10\sqrt{p}(q-1)$ if $2\le p<\infty$.
Here we used that $H^{\lambda,q}(\R^d)$ is $(2,\sqrt{q-1})$-smooth by Proposition \ref{prop:LpX-2mooth} and that
\[\|g_t\|_{\gamma(\ell^2,H^{\lambda,q}(\R^d))}\leq \|g_t\|_{\gamma_q(\ell^2,H^{\lambda,q}(\R^d))} = \|\gamma\|_q \|g_t\|_{H^{\lambda,q}(\R^d;\ell^2)}\]
by H\"older's inequality and \cite[Proposition 9.3.2]{HNVW17}),
where $\gamma$ is a standard Gaussian random variable (whose moments satisfy $\|\gamma\|_q\leq \sqrt{q-1}$).

We consider the approximation scheme \eqref{eq:iterationg} for the
splitting (S), implicit Euler (IE), and Crank-Nicholson (CN) schemes
 discussed in Example \ref{ex:analyticdiscr}. Each of them leads to a sequence of approximate solutions $(u_j^{(n)})_{j=0}^n$, $n\ge 1$, for which we define the approximation errors
\[E_{n,\beta} := \Big(\E\sup_{j=0,\ldots,n}\|u_{t_j^{(n)}} - u_j^{(n)}\|^p_{H^{\lambda-2\beta,q}(\R^d)}\Big)^{1/p}.\]
These numbers also depend on $p,q,\lambda$ and $d$, but the rates in the estimates below will be independent of these parameters. By Theorem \ref{thm:generalapprox2}, $E_{n,0}\to 0$ for (S) and (IE). For $q=2$, (CN) is contractive by Proposition \ref{prop:contractionHilbert} and again we obtain $E_{n,0}\to 0$.
Moreover, we can give rates of convergence for each of these methods. These are given in Table \ref{table:heat}
for the errors $E_{n,\beta}$ with $\beta\in (0,1]$ (up to constants depending on $p,q$). The assertions follow from Example \ref{ex:analyticdiscr}, and Corollary \ref{cor:splittingdecay} and Theorem \ref{thm:generalapprox1} applied with $X = H^{\lambda-2\beta, q}(\R^d)$, $\Dom(\Delta) = H^{\lambda-2\beta+2, q}(\R^d)$ and $Y = H^{\lambda, q}(\R^d) = [X,\Dom(\Delta)]_{\beta}$.
\begin{table}[ht]
\begin{tabular}{ |c|c|c|c| }
\hline
Scheme &  $\beta$ & $q$ & Error $E_{n,\beta}$ \\
\hline
splitting  &  $(0,1]$ & $[2, \infty)$ & $n^{-\beta}$\\
\hline
implicit Euler  & $(0,1]$ & $[2, \infty)$ & $n^{-\beta}(\log(n+1))^{1/2}$ \\
\hline
Crank-Nicholson & $(0,1]$ & $[2, \infty)$ & $n^{-\beta}(\log(n+1))^{1/2}$ \\
\hline
\end{tabular}

\medskip
\caption{Approximation errors for the stochastic heat equation.}\label{table:heat}
\end{table}

Up to a logarithmic term the convergence rates are the same for the three schemes, independently of $p\in (0, \infty)$. Although (S) and (CN) have better orders of convergence, the convergence rate of the approximation errors $E_{n,\beta}$ cannot exceed $\beta$ due to limitations in Corollary \ref{cor:splittingdecay} and Theorem \ref{thm:generalapprox1}.
\end{example}

We next consider a simple non-parabolic equation. Here, higher order schemes give better rates of convergence. Other non-parabolic examples, including wave equation on $\R^d$ (for $q=2$), can be treated similarly.

\begin{example}[Stochastic transport equation]\label{ex:stochtrans}
Consider the following transport equation on $\R$:
\begin{equation}\label{eq:transport}
\begin{cases}
\ud u_t &= \partial_x u_t + \sum_{k\geq 1} g_{t}^{k}\ud W_t^k, \quad t\in [0,T],
\\ u_0 & = 0.
\end{cases}
\end{equation}
Here $g\in L^p_{\bF}(\Omega; L^2 (0,T;H^{\lambda,q}(\R^d;\ell^2)))$ with $0< p<\infty$. It is well known that $\partial_x$ generates a $C_0$-contraction semigroup on $H^{\lambda,q}(\R)$ for all $\lambda\in \R$ and $1\le q<\infty$.

Let us now assume that $2\le q<\infty$. As before, by Theorem \ref{thm:contractionS-new}, the mild solution $u$ to the problem \eqref{eq:transport} has a continuous modification with values in $H^{\lambda}(\R)$ which satisfies
\begin{align*}
\E \sup_{t\in [0,T]}\|u_t|_{H^{\lambda,q}(\R)}^p\leq C_{p,q}^p \E\|g\|_{L^2(0,T;H^{\lambda,q}(\R;\ell^2))}^p,
\end{align*}
where may take $C_{p,q} = 10\sqrt{p}(q-1)$ if $2\le p,\infty$.
As before, for $\beta\geq 0$ let
\[E_{n,\beta} := \Big(\E\sup_{j=0,\ldots,n}\|u_{t_j^{(n)}} - u_j^{(n)}\|^p_{H^{\lambda-\beta,q}(\R)}\Big)^{1/p}.\]
By Theorem \ref{thm:generalapprox2} we have $E_{n,0}\to 0$ for (S) and (IE), and if $q=2$ the same holds for (CN) by Proposition \ref{prop:contractionHilbert}.

Table \ref{table:transport} gives the estimates for the errors $E_{n,\beta}$ for suitable intervals for $\beta$ (up to constants depending on $p,q$). The assertions follow from Example \ref{ex:generaldiscr} (using Proposition \ref{prop:contractionHilbert} for (CN) if $q=2$), Corollary \ref{cor:splittingdecay}, and
Theorem \ref{thm:generalapprox1} applied with $X = H^{\lambda-\beta, q}(\R)$, $\Dom(A^m) = H^{\lambda-\beta+m, q}(\R)$ and $Y = H^{\lambda, q}(\R) = [X,\Dom(A^m)]_{\beta/m}$ for $m=1$ for (S), $m=2$ for (IE), and $m=3$ for (CN). Note that $\phi(8/5) = 1$; since the convergence rate cannot exceed 1, there is no point in considering values $\beta>\frac85$.

\begin{table}[ht]
\begin{tabular}{ |c|c|c|c| }
\hline
Scheme &  $\beta$ & $q$ & Error $E_{n,\beta}$ \\
\hline
splitting &  $(0,1]$ & $[2, \infty)$ & $n^{-\beta}$\\
\hline
implicit Euler  & $(0,2]$ & $[2, \infty)$ & $n^{-\beta/2}(\log(n+1))^{1/2}$ \\
\hline
Crank-Nicholson & $(0,\frac32]$ & $q=2$ & $n^{-2\beta/3}(\log(n+1))^{1/2}$ \\
\hline
Crank-Nicholson & $(\frac12,\frac85]$ & $q\in (2, \infty)$ & $n^{-\phi(\beta)}(\log(n+1))^{1/2}$ \\
\hline
\end{tabular}
\medskip
\caption{Approximation errors for the stochastic transport equation, where $\phi$ is  the piecewise linear function connecting the points $(\frac12, 0)$, $(1,\frac12)$, and $(2,\frac43)$.}\label{table:transport}
\end{table}
\end{example}

Our final example concerns the Schr\"odinger equation.

\begin{example}[Stochastic Schr\"odinger equation]\label{ex:stochschr}
Consider the following heat equation on $\R^d$:
\begin{equation*}
\begin{cases}
\ud u_t &= i\Delta u_t + \sum_{k\geq 1} g_{t}^{k}\ud W_t^k, \quad t\in [0,T].
\\ u_0 & = 0.
\end{cases}
\end{equation*}
We assume that $g\in L^p_{\bF}(\Omega; L^2 (0,T;H^{\lambda}(\R^d;\ell^2)))$ for some $0< p<\infty$, where $H^{\lambda}(\R^d) = H^{\lambda,2}(\R^d)$. It is well known that $i\Delta$ generates a unitary $C_0$-group on $H^{\lambda}(\R^d)$ for all $\lambda\in \R$. As before, by Theorem \ref{thm:contractionS-new}, the mild solution $u$ to the problem \eqref{eq:transport} has a continuous modification with values in $H^{\lambda}(\R^d)$ which satisfies
\begin{align*}
\E\sup_{t\in [0,T]}\|u_t\|_{H^{\lambda}(\R^d)}^p\leq C_p^p\E\|g\|_{L^2(0,T;H^{\lambda}(\R^d;\ell^2))}^p,
\end{align*}
where we may take $C_p = 10 \sqrt{p}$ if $2\le p<\infty$.

As before let
\[E_{n,\beta} := \Big(\E\sup_{j=0,\ldots,n}\|u_{t_j^{(n)}} - u_j^{(n)}\|^p_{H^{\lambda-2\beta}(\R^d)}\Big)^{1/p}.\]
By Theorem \ref{thm:generalapprox2}, $E_{n,0}\to 0$ for (S), (IE), and (CN) (using Proposition \ref{prop:contractionHilbert} for the latter).

Table \ref{table:schrodinger} gives the estimates for the errors $E_{n,\beta}$ (up to constants depending on $p$) for suitable intervals for $\beta$. The assertions follow from Example \ref{ex:generaldiscr}, Corollary \ref{cor:splittingdecay}, and
Theorem \ref{thm:generalapprox1} applied with $X = H^{\lambda-2\beta}(\R^d)$ and $Y = H^{\lambda, q}(\R^d) = [X,\Dom(A^m)]_{\beta/m}$ for $m=1$ for (S), $m=2$ for (IE), and $m=3$ for (CN).

\begin{table}[ht]
\begin{tabular}{ |c|c|c| }
\hline
Scheme &  $\beta$ &  Error $E_{n,\beta}$ \\
\hline
splitting &  $(0,1]$ & $n^{-\beta}$\\
\hline
implicit Euler  & $(0,2]$  & $n^{-\beta/2}(\log(n+1))^{1/2}$ \\
\hline
Crank-Nicholson & $(0,\frac32]$ & $n^{-2\beta/3}(\log(n+1))^{1/2}$ \\
\hline
\end{tabular}

\medskip
\caption{Approximation errors for the stochastic Schr\"odinger equation.}\label{table:schrodinger}
\end{table}
\end{example}

We are aware of only few papers dealing with convergence uniformly in time in infinite dimensions.
In \cite{GyKr03}
the splitting method is considered for (possibly degenerate) parabolic problems with gradient noise.
The inhomogeneities have to be uniformly bounded in time. The same methods are considered in \cite{CoxNee13} for semi-linear stochastic parabolic problems. No contractivity of the semigroups needs to be assumed and convergence in H\"older norms is obtained under $L^p$-integrability conditions in time with $p>2$. See Table \ref{tab:parabolic} for a comparison of the convergence rates.

In \cite{CoxNee13} (in the setting of UMD spaces) and \cite{GyMi07} (in the setting of monotone operators on Gelfand triples $V\hookrightarrow X\hookrightarrow V^*$), the implicit Euler scheme was considered with uniform convergence in time, but these results seem not to be comparable to ours due to the fact that an additional discretisation of the noise term is allowed. In the latter reference, convergence rates of order $n^{-\nu}$ are obtained under the assumption that the solution $u$ belong to $C^{\nu}([0,T];L^2(\Omega;V)) \cap L^2(\Omega;L^\infty(0,T;V))$.
Results on uniform convergence in time (and sometimes even convergence in H\"older norms in time) for schemes involving space and time discretisation can be found in many papers, including \cite{CoxHau12,CoxHau13,CHNJW,Gyo99,GyMi09,Yoo00,Jent09,PeSi05}. Results concerning uniform convergence in case of white noise and discretisation in time only can be found in \cite{BCH, BG, GyNu95, GyNu97}. Some results are with explicit rates and some are not, but the schemes considered in these papers are different.

In the parabolic setting, results on convergence of the form
\begin{equation}\label{eq:pointwiseconvscheme}
\sup_{j=0,\ldots,n} \E \|u(t_j^{(n)}) - u_j^{(n)}\|^p\to 0
\end{equation}
(notice the reversed order of supremum and expectation) with explicit rates, which can even be faster than $1/n$, can be found in \cite{CoxNee10,JenKlo,Lord} and references therein.

\begin{table}[ht]
\begin{tabular}{ |c|c|c|c|c| }
\hline
paper & Scheme &  $\beta$ & $g\in L^r$ in time & Error $E_{n}$ \\
\hline
present & splitting &  $(0,1]$ & $r=2$ & $n^{-\beta}$\\
\hline
\cite{GyKr03} & splitting &  $2$ & $r=\infty$ & $n^{-1}$\\
\hline
\cite{CoxNee13} & splitting  & $(-\frac12,\frac12)$ & $r>2$  & $n^{-\frac12-\beta+\frac1r+\varepsilon}$ \\
\hline
\end{tabular}

\medskip
\caption{Comparison of rates in the parabolic setting.}\label{tab:parabolic}
\end{table}
For non-parabolic problems no systematic results seem to be available on uniform convergence in time. In \cite{Wang} uniform convergence with explicit rates has been obtained for a nonlinear wave equation with the splitting scheme. The fact that the underlying semigroup is a group allows us to write
\[\int_0^t S(t-s) g_s d W_s = S(t)\int_0^t S(-s) g_s d W_s\]
and uniform convergence can be obtained from standard maximal estimates for martingales. In \cite{FiTaTe} the authors obtain uniform convergence results in case the semigroup admits a dilation to a group.
Our results do not rely on the above identity and therefore are applicable in the case of arbitrary contractive $C_0$-semigroups, and the convergence holds with the same rate. Even more is true: for arbitrary $C_0$-semigroups and general numerical schemes the same convergence rates can be obtained up to a logarithmic factor.

\section{Maximal inequalities for random stochastic convolutions}\label{sec:random}

In this section we consider the time-dependent problem
\begin{equation}\label{eq:SEE-random}
    \begin{cases}
        \ud u_t &= A(t)u_t\ud t + g_t \ud W_t, \qquad t\in [0,T], \\
        u_0 &= 0.,
    \end{cases}
\end{equation}
with {\em random} operators $A(t)$. More precisely we assume that $(A(t,\om))_{(t,\om)\in [0,T]\times \Om}$ is an adapted family of closed operators acting in $X$ which satisfy suitable conditions, to be made precise below, guaranteeing the generation of an adapted evolution family. We will assume throughout that $W$ is an adapted $H$-cylindrical Brownian motion on $\Om$. and that
$g:[0,T]\times \Om\to \gamma(H,X)$ is progressively measurable; recall that this is equivalent to the requirement that $g(h): [0,T]\times \Om\to X$ is progressively measurable for all $h\in H$. Many of the results of this section are expected to extend to more general martingales.

\subsection{The forward stochastic integral}
In analogy with the non-random case one expects that \eqref{eq:SEE-random} admits a mild solution given as before by the stochastic convolution process $\int_0^t S(t,s)g_s\ud W_s$. This stochastic integral, however, cannot be defined as an It\^o stochastic integral because the random variables $S(t,s)x$ are only assumed to be $\F_t$-measurable rather than $\F_s$-measurable and consequently the integrand will not be progressively measurable in general.

To overcome this problem we use the forward stochastic integral, introduced and studied by Russo and Vallois \cite{RussoVallois} in the scalar-valued setting. Following \cite{LeonNual,ProVer14,ProVer15} we define its vector-valued analogue as follows.
Fix an orthonormal basis $(h_k)_{k\geq 1}$ of $H$. For processes $\Phi\in L^0(\Om;L^2(0,T;\gamma(H,X)))$ and $n=1,2,\dots$ define
\begin{align*}
I^-(\Phi,n) := n\sum_{k=1}^n \int_0^T \Phi_s h_k (W_{(s+1)/n} - W_s) h_k\ud s.
\end{align*}
The process $\Phi$ is {\em forward stochastically integrable} if the sequence $(I^-(\Phi,n))_{n\geq 1}$ converges in probability.
If this is the case, the limit is independent of the choice of orthonormal basis and is called the {\em forward stochastic integral} of $\Phi$. We write
\[\int_0^T \Phi_s\ud W_s^- := I^{-}(\Phi) := \limn I^-(\Phi,n).\]
Notice that $\Phi$ is not assumed to be progressively measurable. It is easy to see that if $\Phi$
is a finite rank step process, then $\Phi$ is forward integrable. In case $\Phi$ is progressively measurable and integrable in the It\^o sense, then the forward stochastic integral exists and coincides with the It\^o integral (see \cite[Proposition 3.2]{ProVer15}).

In order to apply the forward integral to our problem we make following Hypothesis:

\begin{hypothesis}\label{hyp:condSadapted}
The family $(S(t,s,\om))_{0\le s\le t\le T,\,\om\in\Om}$ is an adapted $C_0$-evolution family of contractions on $X$, i.e.,
\begin{enumerate}
 \item[\rm(i)]\label{it:condSadapted1} $(S(t,s,\om)_{0\le s\le t\le T}$ is a $C_0$-evolution family of contractions for every $\om\in \Om$;
 \item[\rm(ii)]\label{it:condSadapted2} $S(t,s,\cdot)x$ is strongly $\F_t$-measurable for all $0\leq s\leq t\leq T$ and $x\in X$.
\end{enumerate}
Furthermore we assume:
\begin{enumerate}
\item[\rm(iii)]\label{it:condSadapted3} $Y$ is a Banach space, continuously embedded in $X$, and for almost all $\om\in \Om$ we have $S(t,\cdot,\om)y \in W^{1,1}(0,t;X)$ for all $t\in (0,T]$ and $y\in Y$ and
\[\|(S(t,\cdot,\om)y)\|_{W^{1,1}(0,t;X)}\leq C(\om)\|y\|_Y\]
for some function $C:\Omega\to [0,\infty)$ independent of $y\in Y$ and $0 \leq s\leq t\leq T$.
\end{enumerate}
\end{hypothesis}

We have the following sufficient condition for forward integrability (see \cite[Corollary 5.3]{ProVer15}, which extends to the current setting).

\begin{proposition}\label{prop:forwardequiv}
Suppose that Hypothesis \ref{hyp:condSadapted} holds, with $X$ a $2$-smooth Banach space, and let $g:[0,T]\times\Om\to\gamma(H,Y)$ be a finite rank adapted step process. Then process $(S(t,s)g_s)_{s\in [0,t]}$ is forward integrable on $[0,t]$ and almost surely we have
\begin{align}\label{eq:pathwisemild}
\int_0^t S(t,s)g_s\ud W_s^- =  S(t,0)\int_0^t g_s \ud W_s + \int_0^t \partial_s S(t,s) \int_s^t g_r \ud W_r \ud s.
\end{align}
Moreover, the process $(\int_0^t S(t,s)g_s\ud W_s^-)_{t\in [0,T]}$ has a continuous modification.
\end{proposition}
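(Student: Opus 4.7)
The plan is to establish the pathwise identity \eqref{eq:pathwisemild} first; once it is in hand, both the forward integrability assertion and the existence of a continuous modification follow by analysing its right-hand side, which involves only the It\^o integral of the progressively measurable process $g$ and a pathwise Bochner integral against $\partial_s S(t,s)$.

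First I would exploit Hypothesis~\ref{hyp:condSadapted}(iii): for almost every $\omega$, for every $y\in Y$ and $0\le s\le t\le T$,
\[ S(t,s,\omega)y = S(t,0,\omega)y + \int_0^s \partial_r S(t,r,\omega) y\ud r \]
holds in $X$. Applying this componentwise to a finite rank adapted step process $g$ taking values in $\gamma(H,Y)$ yields the pathwise $\gamma(H,X)$-valued identity
\[ S(t,s)g_s = S(t,0)g_s + \int_0^s \partial_r S(t,r) g_s\ud r, \]
the integral being a pathwise Bochner integral whose $\gamma(H,X)$-norm is dominated by $C(\omega)\|g_s\|_{\gamma(H,Y)}$.

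Substituting this decomposition into the Riemann-type approximant $I^-(S(t,\cdot)g,n)$ splits it into two terms. The first is $S(t,0)I^-(g,n)$; since $g$ is progressively measurable and It\^o-integrable, the forward sums $I^-(g,n)$ converge in probability to the It\^o integral $\int_0^t g_s\ud W_s$ by \cite[Proposition 3.2]{ProVer15}. For the second term, a pathwise Fubini (valid because $g$ is a simple step process) rewrites it as
\[ \int_0^t \partial_r S(t,r)\Big[n\sum_{k=1}^n \int_r^t g_s h_k\,(W_{(s+1/n)\wedge t}-W_s)h_k\ud s\Big]\ud r, \]
whose bracketed quantity is $I^-(g\one_{(r,t]},n)$ and therefore converges in probability, for each fixed $r$, to $\int_r^t g_s\ud W_s$.

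The main obstacle is passing the limit in $n$ inside the $\ud r$-integral. I would address this by combining the uniform-in-$r$ bound on $\|I^-(g\one_{(r,t]},n)\|_{L^2(\Omega;X)}$ supplied by Proposition~\ref{prop:Seid} with the integrability estimate $\int_0^t \|\partial_r S(t,r)y\|\ud r\le C(\omega)\|y\|_Y$ from Hypothesis~\ref{hyp:condSadapted}(iii); after passing to a subsequence to upgrade convergence in probability to almost sure convergence, dominated convergence justifies the interchange and yields \eqref{eq:pathwisemild}.

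For the continuous modification, the map $t\mapsto S(t,0)\int_0^t g_s\ud W_s$ is continuous because the It\^o integral admits a continuous modification by Proposition~\ref{prop:Seid} and $(t,x)\mapsto S(t,0)x$ is jointly continuous on $[0,T]\times X$ (strong continuity in $t$ combined with contractivity). For the remaining term, fix $\omega$ outside a null set; since $g$ is a simple step process, $s\mapsto \int_s^t g_r\ud W_r$ is uniformly bounded in $Y$ by some finite $M(\omega)$, so the integrand of the second term is dominated by $C(\omega)M(\omega)$ uniformly in $t$. Pointwise continuity in $t$ of the integrand follows from the strong continuity of $S$ and the $W^{1,1}$-regularity in Hypothesis~\ref{hyp:condSadapted}(iii), so a final dominated convergence argument delivers continuity of $t\mapsto \int_0^t \partial_s S(t,s)\int_s^t g_r\ud W_r\ud s$ and hence of the left-hand side of \eqref{eq:pathwisemild}.
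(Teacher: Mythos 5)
The paper does not spell out a proof of this proposition (it cites \cite[Corollary 5.3]{ProVer15} and says the argument ``extends to the current setting''), so I evaluate your argument on its own merits. Your overall strategy --- insert the $W^{1,1}$-representation of $S(t,\cdot)$ into the Riemann--Stieltjes approximants $I^{-}(\cdot,n)$, apply Fubini, and pass to the limit inside the $\ud r$-integral --- is the natural one and is exactly the integration-by-parts idea that underlies the pathwise mild solution formula. The decomposition you use, $S(t,s)y = S(t,0)y + \int_0^s \partial_r S(t,r)y\,\ud r$, is correct, and the Fubini step is unproblematic for a step process. However, the limiting step has a genuine gap.

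The gap is in both dominated-convergence steps: you implicitly use a \emph{pointwise-in-$r$} bound on $\partial_r S(t,r)$ as an operator $Y\to X$, e.g.\ when you write that the integrand of the second term is ``dominated by $C(\omega)M(\omega)$ uniformly in $t$''. Hypothesis~\ref{hyp:condSadapted}(iii) gives only the $L^1$-in-$s$ estimate $\int_0^t \|\partial_s S(t,s)y\|\,\ud s\le C(\omega)\|y\|_Y$ for each \emph{fixed} $y\in Y$; it does not yield $\|\partial_s S(t,s)\|_{\calL(Y,X)}\le C(\omega)$ for a.e.\ $s$, nor even $\|\partial_s S(t,s)\|_{\calL(Y,X)}\in L^1(0,t)$. (That sharper bound appears in Hypothesis~\ref{hyp:condSadapted-randomgenerator}, which is not assumed here.) Consequently you cannot dominate $\|\partial_r S(t,r)\phi_n(r)\|_X$ by a fixed integrable function of $r$ merely by controlling $\sup_r\|\phi_n(r)\|_Y$ or $\sup_r\|I^{-}(g\one_{(r,t]},n)\|_{L^2(\Omega;X)}$; the Hypothesis simply does not apply to a varying argument $\phi_n(r)$. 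The $L^2(\Omega;X)$ bound from Proposition~\ref{prop:Seid} is also mismatched both in target space (it is an $X$ bound, not a $Y$ bound) and in nature (it is an in-expectation bound, while DCT in the $\ud r$-integral requires pathwise control), so it does not help close this gap; note also that Proposition~\ref{prop:Seid} is stated for It\^o integrals, not for forward Riemann approximants, though these can be rewritten as It\^o integrals when $g$ is progressive.

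The gap is closable, and the fix is where the finite rank step structure must be used in an essential way. Because $g$ is a finite rank adapted step process with values in $\gamma(H,Y)$, all the quantities $v_s := \int_0^s g\,\ud W$, $\phi(r) := v_t - v_r$, and the approximants $\phi_n(r) := I^{-}(g\one_{(r,t]},n)$ take values in a \emph{fixed} finite-dimensional subspace $V\subseteq Y$ spanned by the $\xi_{ij}$'s. Fix a basis $\{e_1,\dots,e_d\}$ of $V$ and write $\phi_n(r) = \sum_\ell a_\ell^{(n)}(r)e_\ell$, $\phi(r)=\sum_\ell a_\ell(r)e_\ell$. Pathwise continuity of $W$ gives that $\phi_n(r)\to\phi(r)$ almost surely \emph{uniformly in $r$} (so passing to a subsequence is not needed and avoids the problem that the exceptional null set would otherwise depend on $r$); hence the scalar coefficients $a_\ell^{(n)}$ converge uniformly and are uniformly bounded in $n$ and $r$. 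Now $\int_0^t\partial_r S(t,r)\phi_n(r)\,\ud r = \sum_\ell\int_0^t a_\ell^{(n)}(r)\,\partial_r S(t,r)e_\ell\,\ud r$, and for each fixed $\ell$ the function $r\mapsto\partial_r S(t,r)e_\ell$ lies in $L^1(0,t;X)$ by Hypothesis~\ref{hyp:condSadapted}(iii); scalar dominated convergence against this fixed integrable function now legitimately yields the limit. The same coordinatewise device (uniform boundedness of the scalar coefficients, $L^1$-in-$s$ integrability of $\partial_s S(t,s)e_\ell$ for each basis vector $e_\ell$, and the strong continuity in $t$ built into the $W^{1,1}$-regularity) repairs the continuity argument as well. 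I recommend rewriting the two DCT steps explicitly in this coordinate form; without this, your argument as stated would also ``prove'' the result for general $\gamma(H,Y)$-valued step processes without finite rank, which is not covered by the Hypothesis.
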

The right-hand side of \eqref{eq:pathwisemild} is well defined by the hypothesis and the assumption that $g$ takes values in $Y$. By the almost sure pathwise continuity of $\int_0^\cdot g_s \ud W_s$, the forward integral in \eqref{eq:pathwisemild} admits a continuous modification.

\begin{remark}
In the setting where $S$ is generated by an adapted family $(A(t))_{t\in [0,T]}$ satisfying suitable parabolicity assumptions, the right-hand side of \eqref{eq:pathwisemild} is called the {\em pathwise mild solution} of \eqref{eq:SEE-random}. Pathwise mild solutions were introduced and extensively studied in \cite{ProVer14}. In the parabolic case, $\partial_s S(t,s)$ typically extends to a bounded operator on $X$ and $\|\partial_s S(t,s)\|\leq C(t-s)^{-1}$, where $C$ depends on $\omega\in \Omega$.  Since $\int_0^\cdot g_r \ud W_r$ is almost surely H\"older continuous under $L^p(0,T)$-integrability assumptions on $g$ with $p>2$, the right-hand side of \eqref{eq:pathwisemild} exists pathwise as a Bochner integral.

It is quite difficult to prove estimates for the forward integral directly. A major advantage of using the right-hand side of \eqref{eq:pathwisemild} is that one can obtain estimates using only It\^o and Bochner integrals.
\end{remark}

\subsection{The maximal inequality}
We will now extend the maximal estimate of Theorem \ref{thm:contractionS-new} to random evolution families, replacing the It\^o stochastic integral of that theorem by the forward stochastic integral. The precise sense in which the forward integral constitutes a solution of the problem \eqref{eq:SEE-random} will be addressed subsequently in Theorem \ref{thm:contractionS-adaptedSDE}.
Even without the supremum on the left-hand side, the estimate in Theorem \ref{thm:contractionS-adapted} is new.

\begin{theorem}\label{thm:contractionS-adapted}
Suppose that Hypothesis \ref{hyp:condSadapted} holds,with $X$ a $2$-smooth Banach space,  and let $g:[0,T]\times\Om\to\gamma(H,Y)$ be a finite rank adapted step process. Then for all $0<p<\infty$ we have
\begin{align*}
\E\sup_{t\in [0,T]}\Big\|\int_0^t S(t,s) g_s \ud W_s^-\Big\|^p \leq C_{p,D}^p \|g\|_{L^p(\Omega;L^2(0,T;\gamma(H,X)))}^p,
\end{align*}
where the constant $C_{p,D}$ only depends on $p$ and $D$. For $2\le p<\infty$ the inequality holds with $C_{p,D} = 10D\sqrt{p}$.
\end{theorem}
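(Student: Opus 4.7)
The plan is to adapt the discretization argument used in Step 1 of the proof of Theorem \ref{thm:contractionS-new}, the essential new difficulty being that the random contractions $S(t_j^{(n)}, t_{j-1}^{(n)})$ are $\F_{t_j^{(n)}}$-measurable but not predictable with respect to $\F_{t_{j-1}^{(n)}}$, so Theorem \ref{thm:Pinelis} does not directly apply. I would fix a finite rank adapted step process $g$ and a partition $\pi^{(n)} = \{t_j^{(n)}: j = 0, \ldots, n\}$ of $[0,T]$, and set
\[ v_k^{(n)} := \sum_{j=1}^k S(t_k^{(n)}, t_{j-1}^{(n)}) dM_j, \qquad dM_j := \int_{t_{j-1}^{(n)}}^{t_j^{(n)}} g_s \ud W_s. \]
The pathwise cocycle identity yields the recursion $v_k^{(n)} = S(t_k^{(n)}, t_{k-1}^{(n)})(v_{k-1}^{(n)} + dM_k)$, which has the shape $f_k = V_k(f_{k-1} + dg_k)$ with $V_k$ only $\F_{t_k^{(n)}}$-measurable, rather than the original Pinelis form $f_k = V_k f_{k-1} + dg_k$ with $V_k$ predictable.

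The key observation is that Theorem \ref{thm:Pinelis} remains valid for this shifted recursion, with the same constants, even when $V_k$ is merely adapted. Contractivity of $V_k$ yields the pathwise bound $\|v_k^{(n)}\| \leq \|v_{k-1}^{(n)} + dM_k\|$, and combining the monotonicity of $\cosh$ on $[0,\infty)$ with Lemma \ref{lem:folklorePinelis} applied to the $\F_{t_{k-1}^{(n)}}$-measurable $v_{k-1}^{(n)}$ and the martingale difference $dM_k$ produces
\[ \E_{t_{k-1}^{(n)}} \cosh(\|v_k^{(n)}\|) \leq \bigl(1 + D^2 \E_{t_{k-1}^{(n)}}(e^{\|dM_k\|} - 1 - \|dM_k\|)\bigr) \cosh(\|v_{k-1}^{(n)}\|), \]
which is precisely the supermartingale inequality driving the proof of Lemma \ref{lem:tail}; indeed, setting $h_{j-1} := v_{j-1}^{(n)}$ and $f_j := v_{j-1}^{(n)} + dM_j$ verifies the hypotheses of that lemma verbatim, and $\|v_k^{(n)}\| \leq \|f_k\|$ gives the desired tail bound. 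For the good-$\lambda$ argument of Lemma \ref{lem:checkburk} one sets $h_j := V_j(h_{j-1} + \one_{\{\mu < j \leq \tau \wedge \nu\}} d\bar g_j)$, and induction still yields $h_\nu = \bar f_\nu - V_{\nu,\mu} \bar f_\mu$ on the relevant event; since $V_{\nu,\mu}$ is a product of contractions, the lower bound $\|h_\nu\| \geq \|\bar f_\nu\| - \|\bar f_\mu\| > (\beta - 1 - \delta_2)\lambda$ is preserved, while $\F_{t_{j-1}^{(n)}}$-measurability of the stopping set $\{\mu < j \leq \tau \wedge \nu\}$ requires only the adaptedness of $(\bar f_j)$, not predictability of $V_j$. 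The symmetrization step in the proof of Theorem \ref{thm:Pinelis} is preserved because the recursion $F_k = V_k(F_{k-1} + dG_k)$ between decoupled tangent copies has the same shape.

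Granting this extension of Pinelis's inequality, Step 1b of the proof of Theorem \ref{thm:contractionS-new} transcribes to yield, for $2 \leq p < \infty$,
\[ \Big\|\max_k \|v_k^{(n)}\|\Big\|_p \leq a_{\pi^{(n)}} + 10D\sqrt{p} \, \|g\|_{L^p(\Omega; L^2(0,T; \gamma(H,X)))}, \]
with $a_{\pi^{(n)}} = o(1)$ as $\mathrm{mesh}(\pi^{(n)}) \to 0$. To pass to the limit I would use the summation-by-parts identity
\[ v_k^{(n)} = S(t_k^{(n)}, t_{k-1}^{(n)}) M_{t_k^{(n)}} - \sum_{j=1}^{k-1} \int_{t_{j-1}^{(n)}}^{t_j^{(n)}} \partial_s S(t_k^{(n)}, s) M_{t_j^{(n)}} \ud s, \]
which, via Hypothesis \ref{hyp:condSadapted}(iii), continuity of $M$, and dominated convergence, shows pathwise convergence of $v_k^{(n)}$ to the expression for the forward integral provided by Proposition \ref{prop:forwardequiv}. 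Continuity of the limiting process then combines with Fatou's lemma to upgrade the discrete maximum to the supremum over $[0,T]$. The case $0 < p < 2$ follows by the analogous extension of Corollary \ref{cor:Pinelis}, or alternatively by localisation together with Lenglart's inequality. The main obstacle is the Pinelis extension itself, which requires a careful but routine rereading of the proofs of Lemmas \ref{lem:tail} and \ref{lem:checkburk}.
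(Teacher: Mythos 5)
Your proposal is correct, but it takes a genuinely different route from the paper's. The paper's own proof introduces a time shift $\delta>0$ and works with $S^\delta(t,s):=S((t-\delta)^+,(s-\delta)^+)$: for partitions with mesh at most $\delta$ the contractions $V_j:=S^\delta(t_j,t_{j-1})$ are then strongly $\F_{t_{j-1}}$-measurable, so Theorem~\ref{thm:Pinelis} applies \emph{as stated}, and one passes through a double limit, first $\mathrm{mesh}\to 0$ and then $\delta\downarrow 0$, the latter step resting on Hypothesis~\ref{hyp:condSadapted}(iii) and the pathwise-mild representation of Proposition~\ref{prop:forwardequiv}. You instead observe that the ``shifted'' recursion $f_j=V_j(f_{j-1}+dg_j)$, which is what the cocycle identity actually produces, can be fed through Pinelis's machinery directly with $V_j$ merely adapted and the constants unchanged. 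Your extension works because the auxiliary sequence $h_j=V_j(h_{j-1}+\gamma_j)$ in the modified Lemma~\ref{lem:checkburk} is automatically $\F_j$-measurable: in the original argument the only place predictability of $V_j$ enters is to make $(V_{j+1}h_j)_j$, which plays the role of $h$ in Lemma~\ref{lem:tail}, adapted, and in the shifted recursion that role is taken over by $(h_j)_j$ itself. The quantitative ingredients survive verbatim: $\|\overline{f}_\mu\|\leq(1+\delta_2)\lambda$ and $\|h_\nu\|>(\beta-1-\delta_2)\lambda$ still hold since contractivity gives the same bounds, the stopping-time measurability needed for part (a) of the claim uses only adaptedness of $(\overline{f}_j)$, and the symmetrisation step goes through because the decoupling $\sigma$-algebra $\G$ contains $\F_k$ so $V_j$ can be pulled out of $\E_\G$. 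The pay-off of your route is a cleaner single-limit argument (no auxiliary $S^\delta$); the cost is having to re-trace Pinelis. Both limit steps ultimately rely on the same ingredients (Hypothesis~\ref{hyp:condSadapted}(iii) and Proposition~\ref{prop:forwardequiv}), so the gain is conceptual rather than technical, but the observation that Theorem~\ref{thm:Pinelis} holds for the shifted recursion $f_j=V_j(f_{j-1}+dg_j)$ with adapted $V_j$ is of independent interest and isolates more sharply what the paper's $\delta$-trick is really doing.
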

\begin{proof}

The proof is similar to that of Theorem \ref{thm:contractionS-new}, but with some extra technicalities which justify a detailed presentation.

\smallskip
{\em Step 1}. \
Let $g:[0,T]\times\Om\to \gamma(H,X)$ be an adapted finite rank step process, say
\begin{equation*}
g  = \sum_{j=1}^{k} \one_{(s_{j-1},s_{j}]} \sum_{i=1}^\ell h_i\otimes \xi_{ij}
\end{equation*}
as in \eqref{eq:simple}. For the moment there is no need to insist that $g$ be $Y$-valued; this will only be needed in the last step of the proof.

Fix $0<\delta<T$ and set $S^{\delta}(t,s) := S((t-\delta)^+,(s-\delta)^+)$ for $0\leq s\leq t\leq T$.
Fix a partition $\pi:= \{r_0,\dots,r_N\}$, where $0= r_0<r_1<\ldots <r_N=T$, and
let $(K(t,s,\omega))_{0\leq s\leq t\leq T,\,\omega\in \Omega}$ be a family of contractions on $X$ with the following properties:
\begin{enumerate}[\rm(i)]
 \item\label{eq:Kgd1} $K(t,\cdot,\om)$ is constant on $[r_{j-1},r_j)$ for all $t\in [0,T]$, $\om\in\Om$, and $j=1, \ldots, N$;
 \item\label{eq:Kgd2} $K(\cdot, s,\om)$ is strongly continuous for all $s\in [0,T]$ and $\om\in\Om$;
 \item\label{eq:Kgd3} $S^\delta(t,r,\om) K(r,s,\om) = K(t,s,\om)$  for all $0\leq r\leq s\leq t\leq T$ and $\om\in\Om$;
 \item\label{eq:Kgd4} $K(t,s,\cdot)x$ is strongly $\F_{(t-\delta)^+}$-measurable for all $0\le s\leq t\leq T$.
\end{enumerate} By refining $\pi$ we may assume that $|r_j-r_{j-1}|\leq \delta$ for $j=1,\ldots,N$ and that
$s_j\in \pi$ for all $j=0,\ldots,k$.

Define the process $(v_t)_{t\in [0,T]}$ by
\begin{align}\label{eq:K}
v_t := \int_0^t K(t,s) g_s \ud W_s^{-},
\end{align}
this forward integral being well defined since the integrand is a finite rank step process.
For $t\in [0,r_1]$ the above integral coincides with the It\^o integral since $K(t,s,\cdot)$ is strongly $\F_{0}$-measurable.
By \eqref{eq:Kgd3}, for $r_{j-1} \le s\le t< r_j$ we have
\begin{align}\label{eq:splitSdelta}
v_t = S^{\delta}(t,s) v_{s} + \int_{s}^{t} K(t,r) g_r \ud W_r,
\end{align}
where the stochastic integral is again an It\^o integral since the random variable $K(t,r,\cdot)$ does not depend on $r\in [s,t]\subseteq [r_{j-1},r_j)$ by \eqref{eq:Kgd1} and is strongly $\F_{r_{j-1}}$-measurable by \eqref{eq:Kgd4}
and the inclusion $\F_{(t-\delta)^+}\subseteq \F_{r_{j-1}}$ (using that $(t-\delta)^+\le r_{j-1}$).
Properties \eqref{eq:Kgd1} and \eqref{eq:Kgd2} imply that $v$ has a modification with continuous paths. Working with such a modification, we will first prove that for all $2\le p<\infty$ one has
\begin{align*}
\Big\|\sup_{t\in [0,T]}\n v_t\|\Big\|_p \leq 10D\sqrt{p}\|g\|_{L^p(\Omega;L^2(0,T;\gamma(H,X)))}.
\end{align*}
By a limiting argument it suffices to consider exponents $2<p<\infty$.

Let $\pi' = \{t_0,t_1, \ldots, t_m\}\subseteq [0,T]$ be another partition. It suffices to prove
\begin{align}\label{eq:toprovemaximalineqAadapted}
\Big\|\sup_{t\in \pi'} \n v_t \n\Big\| \le a_\pi + 10D\sqrt{p}
\|g\|_{L^p(\Omega;L^2(0,T;\gamma(H,X)))}
\end{align}
with $a_\pi = o(\hbox{mesh}(\pi))$ as mesh$(\pi)\to 0$.
Refining $\pi'$ if necessary, we may assume that $\pi'\subseteq \pi$ and that mesh$(\pi')<\delta$.

For fixed $j=1,\ldots,m$ we have, by \eqref{eq:splitSdelta},
\begin{align*}
f_{j} := v_{t_j} & = S^{\delta}(t_j, t_{j-1}) v_{t_{j-1}} + \int_{t_{j-1}}^{t_j} K(t_j,s) g_s \ud W_s
 \\ & =: V_{j} f_{j-1} + dG_j,
\end{align*}
where we set $V_{j} := S^{\delta}(t_j, t_{j-1})$ and $dG_j:= \int_{t_{j-1}}^{t_j} K(t_j,s) g_s \ud W_s$. We further set $f_0:=0$
and $G_0:=0$. As in the proof of Theorem \ref{thm:contractionS-new} the sequence $(dG_j)_{j=1}^m$ is conditionally symmetric
and an application of Theorem \ref{thm:Pinelis} gives
\begin{align*}
\n f^\star\n_p \le 5p \n dG\ss\n_p + 10D\sqrt{p} \n s(G)\n_p.
\end{align*}
Proceeding as in Step 1b of the proof of Theorem \ref{thm:contractionS-new} we obtain \eqref{eq:toprovemaximalineqAadapted}.

\smallskip
{\em Step 2}. \
Fix $n\in \N$ and set $\sigma_n(s) := j 2^{-n}T$ for $s\in [j2^{-n}T, (j+1)2^{-n}T)$. Set $S_n^{\delta}(t,s) := S((t-\delta)^+,\sigma_n((s-\delta)^+))$ and define
$v^{(n)}_t$ as in \eqref{eq:K} with $K(t,s) = S_n^\delta(t,s)$.
The assumptions  \eqref{eq:Kgd1}-- \eqref{eq:Kgd4} in Step 1 apply to $K(t,s) = S_n^\delta(t,s)$, $N = 2^n$, and $r_j = j2^{-n}T$. By what has been shown in this step, the process $v_t$ has a continuous modification. Moreover, for $n\geq m$ the process
\[ v^{(n)}_t - v^{(m)}_t =
 S^{\delta}(t,s) (v_{s}^{(n)}- v_{s}^{(m)}) + \int_{s}^{t} K(t,r)
(I- S(\sigma_n(r),\sigma_m(r))) g_r\ud W_r\]
is strongly progressively measurable. Moreover,
\begin{align*}
\ & \Big\|\sup_{t\in [0,T]} \n v^{(n)}-v^{(m)}\n\Big\|_p
\\ & \qquad\qquad \leq 10D\sqrt{p} \big\|(I- S(\sigma_n((\cdot-\delta)^+),\sigma_m((\cdot-\delta)^+)))g\big\|_{L^p(\Omega;L^2(0,T;\gamma(H,X)))}.
 \end{align*}
Since the right-hand side
tends to $0$ by dominated convergence, $(v^{(n)})_{n\geq 1}$ is a Cauchy sequence with respect to the norm of  $L^p(\Omega;C([0,T];X))$ and hence converges to some $\wt v^{\delta} \in L^p(\Omega;C([0,T];X))$. By Step 1,
\begin{align}\label{eq:wtvdelta}
\Big\|\sup_{t\in [0,T]} \n \wt{v}_t^{\delta}\| \Big\|_{p} = \lim_{n\to \infty} \Big\|\sup_{t\in [0,T]} \n v^{(n)}_t\|\Big\|_{p}\leq 10D\sqrt{p} \|g\|_{L^p(\Omega;L^2(0,T;\gamma(H,X)))}.
\end{align}

We will show next that $ \wt v_t^{\delta}=\int_0^t S^{\delta}(t,s) g_s \ud W_s^{-}$ almost surely for each $t\in [0,T]$. To this end let $\pi'' = \{t_0, \ldots, t_M\}$ with $0=t_0<\ldots<t_M=T$ with mesh$(\pi'')<\delta$. We define an $X$-valued process $(v_t^{\delta})_{t\in [0,T]}$ by setting $v^{\delta}_0 := 0$ and, recursively,
\[v_t^{\delta} = S^{\delta}(t,t_{j-1}) v_{t_{j-1}} + \int_{t_{j-1}}^{t} S^{\delta}(t,s) g_s \ud W_s, \qquad t\in (t_{j-1},t_{j}].\]
The stochastic integral is well defined since for all $t_{j-1}\leq s\leq t\leq t_j$ the random variable $S^{\delta}(t,s) = S((t-\delta)^+, (s-\delta)^+)$ is strongly $\F_{t_{j-1}}$-measurable. Using the elementary properties of forward integrals
we can rewrite this definition as the forward integral
\begin{align}\label{eq:recursive} v^{\delta}(t) = \int_0^t S^{\delta}(t,s) g_s \ud W_s^{-}, \qquad t\in [0,T].
\end{align}
We claim that for each $t\in [0,T]$ we have $v^{\delta}(t) = \wt v^{\delta}(t)$ almost surely. Indeed, by \eqref{eq:Neid},
\begin{align*}
\Big\|\int_{t_{j-1}}^{t} S^{\delta}_n(t,s) g_s & \ud W_s  - \int_{t_{j-1}}^{t} S^{\delta}(t,s) g_s \ud W_s\Big\|_{L^2(\Omega;X)} \\ & \leq D\|(S^{\delta}_n(t,s)- S^{\delta}(t,s))g_s\|_{L^2(\Omega;L^2(0,t;\gamma(H,X)))}\to 0
\end{align*}
as $n\to \infty$ by dominated convergence. Therefore, the terms in the recursive identities \eqref{eq:recursive} converge to the correct limit and the claim is proved.

\smallskip
{\em Step 3}. \ We will next show that
$$ \lim_{\delta\downarrow 0} \int_0^t S^{\delta}(t,s) g_s  \ud W_s^- = \int_0^t S(t,s) g_s  \ud W_s^-$$
in $L^0(\Om;C([0,T];X))$.
This will be done by providing an alternative formula for $\int_0^t S^{\delta}(t,s) g_s \ud W_s^{-}$ in which we can let $\delta\downarrow 0$.
Here it will be important that $g$ takes values in $Y$.

Fix $t\in (0,T]$.
Since $\|\partial_s (S(t,s)y)\|_X\leq C\|y\|_Y$ with a constant $C$ independent of $0<s<t\leq T$, it follows from Proposition \ref{prop:forwardequiv} that the forward stochastic convolution integral $u_t := \int_0^t S(t,s) g_s \ud W_s^{-} $ exists and is almost surely equal to
\[ S(t,0) \int_0^t g_{r} \ud W_r + \int_0^t \partial_s S(t,s) \int_s^t g_r \ud W_r \ud s.\]
Similarly,
\begin{align*}
v^{\delta}(t)& = S((t-\delta)^+,0) \int_0^t g_{r} \ud W_r + \int_0^t \partial_s S((t-\delta)^+,(s-\delta)^+) \int_s^t g_r \ud W_r \ud s
\\ & = S((t-\delta)^+,0) \int_0^t g_{r} \ud W_r + \int_{0}^{(t-\delta)^+} \partial_s S((t-\delta)^+,s) \int_{s+\delta}^t g_r \ud W_r \ud s.
\end{align*}
Letting $\delta\downarrow 0$, by the piecewise strong continuity of $t\mapsto \partial_s S(t,s)$ on $Y$ and dominated convergence we obtain that $v^{\delta}(t)\to u(t)$ almost surely.

By dominated convergence one also obtains that $u$ has a continuous modification. To prove the maximal estimate for this modification it suffices to show that for any finite set $\pi\in [0,T]$,
\begin{align*}
\Big\|\sup_{t\in \pi} \n u_t \n\Big\| \le 10D\sqrt{p}
\|g\|_{L^p(\Omega;L^2(0,T;\gamma(H,X)))}.
\end{align*}
Using that \eqref{eq:wtvdelta} and $v^{\delta}(t) = \wt{v}^{\delta}(t)$ for $t\in \pi$, this follows from Fatou's lemma:
\begin{align*}
\Big\|\sup_{t\in \pi} \n u_t \n\Big\|_p \leq \liminf_{\delta\downarrow 0}\Big\|\sup_{t\in \pi} \n v_t^{\delta} \n\Big\|  \le 10D\sqrt{p}
\|g\|_{L^p(\Omega;L^2(0,T;\gamma(H,X)))}.
\end{align*}

{\em Step 4}. \ The case $0<p<2$ follows again by using Corollary \ref{cor:Pinelis} instead of Theorem \ref{thm:Pinelis}, or by an extrapolation argument involving Lenglart's inequality.
\end{proof}

If the embedding $Y\hookrightarrow X$ is dense we can use the maximal inequality of the theorem to see that for all $0<p<\infty$ the mapping
$$ g\mapsto \int_0^t S(t,s) g_s  \ud W_s^-$$
has a unique extension to a continuous linear operator
$$ J_p: L_{\bF}^p(\Om;L^2(0,T;\gamma(H,X)))\to L^p(\Om;C([0,T];X)).$$
Moreover, by a standard localisation argument,
$J$ has a unique extension to a continuous linear operator
$$ J: L_{\bF}^0(\Om;L^2(0,T;\gamma(H,X)))\to L^0(\Om;C([0,T];X)).$$
It is not guaranteed, however, that for general $g\in L_{\bF}^0(\Om;L^2(0,T;\gamma(H,X)))$ the process $Jg$ is given by
a forward stochastic convolution again, nor is this clear if we replace $L^0$ and $J$ by $L^p$ and $J_p$. The same problem occurs if we use the right-hand side in the identity in Proposition \ref{prop:forwardequiv}.

Since $J_p$ satisfies the same estimate as in Theorem \ref{thm:contractionS-adapted}, we immediately obtain an extension of the exponential tail estimate of Corollary \ref{cor:expontail} in the current setting. As in Remark \ref{rem:Itoformexp} under more restrictive conditions on the random evolution family, but with better bound on the variance $\sigma^2$ a similar result was obtained in \cite[Remark 5.8]{NV20a}.

The next theorem addresses the question in what sense $J_pg$ and $Jg$ ``solve'' the problem \eqref{eq:SEE-random}. Some additional assumptions are needed to establish the precise relation between the random evolution family $S$ and the random operator $A$.

\begin{hypothesis}\label{hyp:condSadapted-randomgenerator} Hypothesis \ref{hyp:condSadapted} is satisfied. Furthermore, the random operator family $A:[0,T]\times \Omega\to \calL(Y,X)$ has the property that $Ay$ is strongly progressively measurable for all $y\in Y$.
Furthermore the following conditions hold:
\begin{enumerate}[\rm(i)]
\item\label{it:condSadaptedgenerator3b}
For almost all $\om\in \Om$ we have $S(t,\cdot,\om)y \in W^{1,1}(0,t;X)$ for all $t\in [0,T]$ and $y\in Y$, and for almost all  $s\in [0,t]$ we have $\partial_s S(t,s)y=-S(t,s)A(s) y$ and
\[\|S(t,s)A(s)y\|_{X}\leq C\|y\|_Y,\]
where $C:\Omega\to [0,\infty)$ is independent of $y\in Y$ and $0\leq s<t\leq T$.

\item\label{it:condSadaptedgenerator3a}
For almost all $\om\in \Om$ we have $S(\cdot,s,\om)y \in W^{1,1}(s,T;X)$ for all $s\in [0,T]$ and $y\in Y$, and for almost all  $t\in [s,T]$ we have $\partial_t S(t,s)y=A(t)S(t,s) y$ and
\[\|A(t)S(t,s)y\|_{X}\leq C\|y\|_Y,\]
where $C:\Omega\to [0,\infty)$ is independent of $y\in Y$ and $0\leq s<t\leq T$.

\item\label{it:condSadaptedgenerator4} There exists a dense subspace $F\subseteq X^*$ such that $F\subseteq \Dom(A(t,\omega)^*)$
    for all $(t,\omega)\in [0,T]\times\Omega$, and
    almost surely the mapping $(t,\omega)\mapsto \lb x, A(t,\omega)^*x^*\rb$ belongs to $L^\infty(0,T)$ for all $x\in X$ and $x^*\in F$.
\end{enumerate}
\end{hypothesis}

In the proof below we will combine \eqref{it:condSadaptedgenerator4} with the observation that if $f:(0,T)\to X$ is integrable
and $g:(0,T)\to X^*$ has the property that $\lb x,g\rb\in L^\infty(0,T)$ for all $x\in X$, then the function  $t\mapsto\lb f(t),g(t)\rb$ is integrable and
$$\int_0^T |\lb f(t),g(t)\rb|\le \n f\n_1 \sup_{\n x^*\n\le 1} \n \lb x,g\rb\n_\infty,$$ the supremum on the right-hand side being finite by a closed graph argument. Indeed, this estimate is clear for simple functions $f$ and the general case follows by approximation.

Under the above hypothesis a process $u\in L_{\bF}^0(\Omega;L^1(0,T;X))$ is called a {\em weak solution} of \eqref{eq:SEE-random} if for all $x^*\in F$, a.s. for all $t\in [0,T]$,
\[\lb u_t,x^*\rb = \int_0^t \lb  u_s, A(s)^* x^*\rb \ud s +\int_0^t  g_s^* x^* \ud W_{s}.\]
In many situations weak solutions are known to be unique. However, we will not address this issue here.

\begin{theorem}\label{thm:contractionS-adaptedSDE}
Suppose that Hypothesis \ref{hyp:condSadapted-randomgenerator} holds, with $X$ a $2$-smooth Banach space, and assume in addition that the embedding $Y\hookrightarrow X$ is dense. Then for every $g\in L_{\bF}^0(\Omega;L^2(0,T;\gamma(H,X)))$ the process $Jg$ is a weak solution to \eqref{eq:SEE-random}.
\end{theorem}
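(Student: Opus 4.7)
The plan is to reduce by a density argument to the case of a $Y$-valued adapted finite rank step process $g$, for which the pathwise mild representation from Proposition~\ref{prop:forwardequiv} is available, and then verify the weak solution identity by a pathwise calculation using Hypothesis~\ref{hyp:condSadapted-randomgenerator}(i) and (ii). Fix $x^* \in F$ and write $M_t := \int_0^t g_s \ud W_s$; the integrability assumption on $g$ ensures that $M_t \in Y$ a.s.\ for the approximating processes. By Proposition~\ref{prop:forwardequiv} and (i),
\[
u_t = S(t,0) M_t - \int_0^t S(t,s) A(s)(M_t - M_s)\ud s.
\]
An immediate consequence of (i), after integrating $\partial_s S(\sigma,s)y = -S(\sigma,s)A(s)y$ on $[0,\sigma]$, is the \emph{cancellation identity}
\begin{equation}\label{eq:cancellation-plan}
\int_0^\sigma S(\sigma,s)A(s)z\ud s = (S(\sigma,0)-I)z, \qquad z\in Y.
\end{equation}

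Next, condition (ii) yields for every $y \in Y$ and $0 \le r \le t$,
\[
\langle S(t,r)y, x^*\rangle = \langle y, x^*\rangle + \int_r^t \langle S(\sigma,r)y, A(\sigma)^* x^*\rangle\ud \sigma,
\]
and this is the main tool. Applying it with $r=0$, $y = M_t$ and with $r=s$, $y = A(s)(M_t-M_s)$, and using Fubini to swap the $s$- and $\sigma$-integrals (justified by the uniform bound in (iii) and integrability of $M$), one obtains
\begin{align*}
\langle u_t, x^*\rangle &= \langle M_t, x^*\rangle - \int_0^t \langle A(s)(M_t - M_s), x^*\rangle\ud s \\
&\qquad + \int_0^t\!\Bigl[\langle S(\sigma,0)M_t, A(\sigma)^* x^*\rangle - \!\int_0^\sigma \!\!\langle S(\sigma,s)A(s)(M_t-M_s), A(\sigma)^* x^*\rangle\ud s\Bigr]\ud \sigma.
\end{align*}
Now split $M_t = M_\sigma + (M_t - M_\sigma)$ and $M_t - M_s = (M_\sigma - M_s) + (M_t - M_\sigma)$ inside the bracket. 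The $M_\sigma$ and $M_\sigma - M_s$ pieces combine to $\langle u_\sigma, A(\sigma)^* x^*\rangle$ (applying the pathwise mild formula at time $\sigma$), while the $M_t - M_\sigma$ pieces, using \eqref{eq:cancellation-plan} with $z = M_t - M_\sigma$, collapse to $\langle A(\sigma)(M_t - M_\sigma), x^*\rangle$. This exactly cancels the term $\int_0^t \langle A(s)(M_t - M_s), x^*\rangle\ud s$ above, leaving
\[
\langle u_t, x^*\rangle = \langle M_t, x^*\rangle + \int_0^t \langle u_\sigma, A(\sigma)^* x^*\rangle\ud \sigma,
\]
and $\langle M_t, x^*\rangle = \int_0^t g_s^* x^*\ud W_s$ by the scalar case of the It\^o integral. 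This establishes the weak identity for the approximating processes.

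Finally, to pass to a general $g \in L_{\bF}^0(\Omega;L^2(0,T;\gamma(H,X)))$, choose adapted finite rank step processes $g^{(n)}$ with values in $\gamma(H,Y)$ that converge to $g$ in $L^0(\Omega;L^2(0,T;\gamma(H,X)))$ (possible since $Y \hookrightarrow X$ densely). By the extension of Theorem~\ref{thm:contractionS-adapted} to $L^0$ via localisation, $J g^{(n)} \to J g$ in $L^0(\Omega;C([0,T];X))$; the term $\int_0^t \langle J g^{(n)}_s, A(s)^* x^*\rangle\ud s$ converges using the $L^\infty$-bound on $s \mapsto \langle y, A(s)^* x^*\rangle$ from (iii) together with the closed graph argument, and the It\^o integrals $\int_0^t (g^{(n)}_s)^* x^*\ud W_s$ converge in probability by the scalar Burkholder inequality. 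The main obstacle is the bookkeeping of the pathwise calculation and Fubini interchange; once the cancellation is spotted the rest is routine.
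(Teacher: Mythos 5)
Your proof is essentially the same as the paper's, resting on the same three ingredients: the pathwise mild representation from Proposition~\ref{prop:forwardequiv}, the cancellation identity $\int_0^\sigma S(\sigma,s)A(s)z\,\dd s = (S(\sigma,0)-I)z$ obtained by integrating Hypothesis~\ref{hyp:condSadapted-randomgenerator}(i), and the integrated weak form of Hypothesis~\ref{hyp:condSadapted-randomgenerator}(ii), combined via Fubini and closed by a density argument exactly as in Step~2 of the paper. The only difference is bookkeeping: the paper first simplifies the pathwise mild formula to $u^g_t = v^g_t + \int_0^t S(t,s)A(s)v^g_s\,\dd s$ and then verifies $\int_0^t \lb u^g_s, A(s)^*x^*\rb\,\dd s = \lb u^g_t - v^g_t, x^*\rb$ by a direct Fubini computation, whereas you pair $u_t$ with $x^*$ directly from the pathwise mild formula, introduce the split $M_t = M_\sigma + (M_t-M_\sigma)$, recognise $\lb u_\sigma, A(\sigma)^*x^*\rb$, and observe the cancellation. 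The algebra is equivalent, just in the opposite direction. One point you should make explicit, as the paper does only in a parenthetical remark: the weak identity $\lb S(t,r)y, x^*\rb = \lb y, x^*\rb + \int_r^t \lb S(\sigma,r)y, A(\sigma)^*x^*\rb\,\dd\sigma$ is needed for $y = A(s)(M_t - M_s)\in X$, which in general does not lie in $Y$; it must be extended from $y\in Y$ to $y\in X$ using the density of $Y$ in $X$, the contractivity of $S$, and the $L^\infty(0,T)$ bound on $s\mapsto \lb x, A(s)^*x^*\rb$ from Hypothesis~\ref{hyp:condSadapted-randomgenerator}(iii) together with the closed graph theorem. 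With that justification added, the argument is complete.
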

\begin{proof}
We proceed in three steps.

\smallskip
{\em Step 1}. \  First let $g:[0,T]\times \Omega\to \calL(H,Y)$ be an adapted finite rank step processes and write $v^g_t = \int_0^t g \ud W$.
From Proposition \ref{prop:forwardequiv}, Theorem \ref{thm:contractionS-adapted} and Hypothesis \ref{hyp:condSadapted-randomgenerator}\eqref{it:condSadaptedgenerator3b} it is immediate that
\begin{align}\label{eq:maxestadptedcase}
\E\sup_{t\in [0,T]}\|u^g_t\|^p \leq C_{p,D}^p \|g\|_{L^p(\Omega;L^2(0,T;\gamma(H,X)))}^p,
\end{align}
and
\[u_t^g = S(t,0) v_t^g - \int_0^t S(t,s)A(s) (v_t^g - v_s^g) \ud s.\]
We check next that $u^g$ is a weak solution. For this we use a variation of the argument in \cite[Theorem 4.9]{ProVer14}.
For all $x\in Y$,
\begin{align}\label{eq:propSA}
\int_0^t S(t,s) A(s) x \ud s= -x + S(t,0)x \ \ \ \text{and} \ \  \int_r^t A(s) S(s,r) x \ud r= S(t,r)x - x
\end{align}
Therefore, applying the first part of \eqref{eq:propSA} with $x = v_t^g$, we obtain
\begin{align}\label{eq:ugwithoutSterm}
u_t^g = v_t^g + \int_0^t S(t,s)A(s) v_s^g \ud s.
\end{align}
To claim that $u^g$ is a weak solution it remains to check that
\[\left<\int_0^t S(t,s)A(s) v_s^g \ud s, x^*\right> = \int_0^t \lb u^g_s, A(s)^*x^*\rb \ud s.\]
Note that the integral on the right-hand side is well defined as a Lebesgue integral almost surely.
To prove the claim we note that by  \eqref{eq:ugwithoutSterm}, Fubini's theorem and the second part of \eqref{eq:propSA} (or rather, its weak version $\int_r^t \lb S(s,r) x, A^*(s) x^*\rb  \ud r = \lb S(t,r) x, x^*\rb  - \lb x, x^*\rb$, the point being that in the argument below the vector $x=A(t) v^g_r$ need not belong to $Y$),
\begin{align*}
  \int_0^t \lb &u^g_s, A(s)^*x^*\rb \ud s \\ & = \int_0^t \lb v_s^g,A(s)^* x^*\rb \ud s  + \int_0^t \int_0^s \lb S(s,r)A(r) v_r^g,A(s)^* x^*\rb \ud r \ud s
\\ & = \int_0^t \lb v_s^g,A(s)^* x^*\rb \ud s  + \int_0^t \int_r^t \lb S(s,r)A(r) v_r^g,A(s)^* x^*\rb \ud s\ud r
\\ & = \int_0^t \lb v_s^g,A(s)^* x^*\rb \ud s  + \int_0^t \lb S(t,r)A(r) v_r^g,x^*\rb \ud r - \int_0^t \lb A(r) v_r^g,x^*\rb \ud r
\\ & = \int_0^t \lb S(t,r)A(r) v_r^g,x^*\rb \ud r,
\end{align*}
which gives the required identity.

\smallskip
{\em Step 2}. \  Let  $g\in L^p_{\mathscr{P}}(\Omega;L^2(0,T;\gamma(H,X)))$ with $0<p<\infty$ and choose a sequence of $Y$-valued adapted finite rank step processes $(g^{(n)})_{n\geq 1}$ such that $g^{(n)}\to g$ in $L^p(\Omega;L^2(0,T;\gamma(H,X)))$. Then from \eqref{eq:maxestadptedcase} applied to $g^{(n)}-g^m$ we obtain that $(u^{g^{(n)}})_{n\geq 1}$ is a Cauchy sequence and therefore converges to some $u$ in $L^p(\Omega;C([0,T];X))$. By Step 1, $u^{g^{(n)}}$ is a weak solution and thus
\[\lb u_t^{g^{(n)}},x^*\rb = \int_0^t \lb  u_s^{g^{(n)}}, A(s)^* x^*\rb \ud s +\int_0^t  (g_s^{(n)})^* x^* \ud W_{s}.\]
Letting $n\to \infty$ in this identity we conclude that $u^g$ is a weak solution. The maximal inequality is obtained by applying \eqref{eq:maxestadptedcase} with $g_n$ and letting $n\to \infty$.
\end{proof}

\begin{remark}
In \cite[Proposition 5.3]{LeonNual}, restrictive conditions in terms of Malliavin differentiability of $S$ are given under which the forward stochastic integral $u_t = \int_0^t S(t,s) g_s  \ud W_s^-$ exists, has a continuous modification, and is a weak solution. Inspection of the proof shows that that if one sets $u_t^{(n)} := I^{-}(\one_{[0,t]} S(t,\cdot) g^{(n)})$, one needs that $\sup_{t\in [0,T]}\|u_t - u_t^{(n)}\|_{L^1(\Omega;X)} \to 0$. Although this is likely to hold in many situations, such considerations can be avoided by using the right-hand side of \eqref{eq:pathwisemild}.
\end{remark}

\begin{remark}\label{rem:splitting}
Theorem \ref{thm:splitting} extend {\em mutatis mutandis} to random evolution families. The only required change is to use the forward integral in the proof and to apply Theorem \ref{thm:contractionS-adapted} instead of Theorem \ref{thm:contractionS-new}. To obtain explicit decay rates under the assumption that $g$ has spatial smoothness, i.e., $g$ takes values in a Banach space $Y$ continuously embedded in $X$, one requires estimates for $\|S(s,\sigma_n(s)) - I\|_{\calL(Y,X)}$. In some applications (e.g. \cite[Section 5.2]{Pazy}) such estimates are available.
\end{remark}

\noindent {\em Acknowledgment.} \
We thank Antonio Agresti, Sonja Cox, Kristin Kirchner, Emiel Lorist, and Ivan Yaroslavtsev for helpful comments.

\newcommand{\etalchar}[1]{$^{#1}$}

\end{document}